                        \theoremstyle{plain}
         \newcommand{\psdraw}[2]
         {\begin{array}{c} \hspace{-1.3mm}
        \raisebox{-4pt}{\epsfig{figure=#1.eps,width=#2}}
        \hspace{-1.9mm}\end{array}}
\newtheorem{theorem}{Theorem}[section]
\newtheorem{thm}{Theorem}
\newtheorem{lemma}[theorem]{Lemma}
\newtheorem{corollary}[theorem]{Corollary}
\newtheorem{proposition}[theorem]{Proposition}
\newtheorem{definition}{Definition}
\theoremstyle{definition}
\newtheorem{remark}[theorem]{Remark}
\def\BC{\mathbb C}
\def\BZ{\mathbb Z}
\def\BR{\mathbb R}
\def\CS{\mathcal S}
\def\al{\alpha}
\def\ve{\varepsilon}
\def\be { \begin{equation} }
\def\ee { \end{equation} }
\newcommand\no[1]{}
\def\sdeg{\mathrm{deg}_{lr}}
\def\mm{{\mathfrak m}}
\def\ldeg{\mathrm{ldeg}}
                  \def\uu{\mathfrak u}
\begin{document}

\title{On Kauffman Bracket Skein Modules at Roots of Unity }

\author[Thang  T. Q. L\^e]{Thang  T. Q. L\^e}
\address{School of Mathematics, 686 Cherry Street,
 Georgia Tech, Atlanta, GA 30332, USA}
\email{letu@math.gatech.edu}

\thanks{Supported in part by National Science Foundation. \\
2010 {\em Mathematics Classification:} Primary 57N10. Secondary 57M25.\\
{\em Key words and phrases: Kauffman bracket skein module, Chebyshev homomorphism.}}

\begin{abstract}
We reprove and expand results of Bonahon and Wong on central elements of the Kauffman bracket skein modules at root of 1 and
on the existence of the Chebyshev homomorphism, using elementary skein methods.
\end{abstract}

\maketitle

\setcounter{section}{-1}

\section{Introduction}
\subsection{Kauffman bracket skein modules} Let us recall the definition of the Kauffman bracket skein module, which was introduced by J. Przytycki \cite{Przy} and V. Turaev \cite{Turaev}.
 Let $R=\BC[t^{\pm1}]$.
 A {\em framed link} in an oriented $3$-manifold $M$ is a disjoint union of smoothly embedded circles, equipped with a
 non-zero normal vector field. The empty set is also considered a framed link.
 The Kauffman bracket skein module $\CS(M)$ is the $R$-module
 spanned by isotopy classes of framed links in $M$ subject to the following relations
\begin{align}
 L & = t L_+ + t^{-1} L_-   \label{eq.skein1}\\
 L \sqcup U &= -(t^2 + t^{-2}) L,  \label{eq.skein2}
\end{align}
where in the first identity, $L, L_+, L_-$ are  identical except in a ball in which they look like in Figure \ref{fig.skein},
\begin{figure}[htpb]
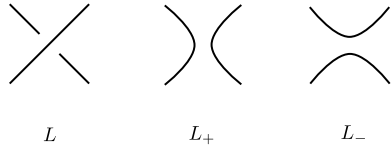

$$ \psdraw{skein}{2in} $$
\caption{The links $L$, $L_+$, and $L_-$}
\label{fig.skein}
\end{figure}
and in the second identity, the left hand side stands for  the  union of a link $L$ and the trivial framed knot $U$ in a ball disjoint from $L$. If $M=\BR^3$ then $\CS(\BR^3)=R$. The
value of a framed link $L$ in $\CS(\BR^3)=R=\BC[t^{\pm1}]$ is a version of Jones polynomial \cite{Ka}.

For a non-zero complex number $\xi$, let $\CS_\xi(M)$ be the quotient $\CS(M)/(t-\xi)$, which is
a $\BC$-vector space.

For an oriented surface $\Sigma$, possibly with boundary, we define $\CS(\Sigma):= \CS(M)$, where $M= \Sigma \times [-1,1] $ is the cylinder over $\Sigma$. The skein module
$\CS(\Sigma)$ has an algebra structure induced by the operation
of gluing one cylinder on top of the other.

For a framed knot $K$ in $M$ and a polynomial $p(z)= \sum_{j=0}^d a_j z^j \in \BC[z]$, define $p(K)$ by
$$ p(K)= \sum_{j=0}^d a_j \, K^{(j)} \in \CS(M),$$
where  $K^{(j)}$ be the link consists of $j$ parallels of $K$ (using the framing of $K$) in a small neighborhood of $K$.
When $L$ is a link, define $p(L)$ by applying $p$ to each component of $L$. More precisely, for a framed link $L\subset M$ with $m$ components $L_1,\dots,L_m$, define
$$ p(L)= \sum_{j_1,\dots,j_m =0}^d\, \left( \prod_{k=1}^m a_{j_k}\right)\left( \bigsqcup_{k=1}^m  L_k^{(j_k)}\right).$$
Here $\bigsqcup_{k=1}^m   L_k^{(j_k)}$ is the link which is the union,  over $k\in \{1,\dots, m\}$, of $j_k$ parallels of $L_k$.

\begin{remark} Suppose $K\subset \Sigma$ is a simple closed curve on the surface $\Sigma$. Consider $K$ as a framed knot in $\Sigma \times [-1,1]$ by identifying $\Sigma = \Sigma \times 0$ and equipping $K$ with the vertical framing, i.e.
the framing where the normal vector is perpendicular to $\Sigma$ and has direction from $-1$ to $1$. Then $K^{(j)}=K^j$, where $K^j$ is the power in the algebra $\CS(\Sigma)$. Thus, $p(K)$ has the usual meaning of applying a polynomial to an element of an algebra.

But if $K$ is a knot in $\Sigma \times [-1,1]$, our $p(K)$ in general is not the result of applying the polynomial $p$ to the element $K$ using the algebra structure of $\CS(\Sigma)$, i.e. $p(K) \neq \sum a_j K^j$.
\end{remark}
\subsection{Bonahon and Wong's results}

\begin{definition}
A polynomial $p(z) \in \BC[z]$ is called {\em central} at $\xi\in \BC^\times $ if for any oriented surface $\Sigma$ and any framed link $L$ in $\Sigma\times [-1,1]$, $p(L)$ is central in the algebra $\CS_\xi(\Sigma)$.

\end{definition}

Bonahon and Wong \cite{BW} showed that if $\xi$ is root of unity of order $2N$, then $T_N(z)$ is central, where $T_N(z)$ is the Chebyshev polynomial
of type 1 defined recursively by
$$ T_0(z)=2, T_1(z)=1, T_n(z) = z T_{n-1}(z) - T_{n-2}(z), \ \forall n \ge 2.$$

We will prove a  stronger version, using a different method.
\begin{thm} A non-constant polynomial $p(z)\in \BC[z]$ is central at $\xi\in \BC^\times$ if and only if

(i) $\xi$ is  a root of unity and

(ii) $p(z)\in \BC[T_N(z)]$, i.e. $p$ is a
 $\BC$-polynomial in $T_N(z)$, where $N$ is the order of $\xi^2$.
\label{thm.main1}
\end{thm}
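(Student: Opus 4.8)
The plan is to treat the stated equivalence as two implications requiring rather different tools. For the forward direction (a central $p$ forces (i) and (ii)) it suffices to exhibit a single surface on which centrality is restrictive, and the torus $\Sigma=\BT^2$ is ideal, since its skein algebra is completely explicit through the product-to-sum formula. For the backward direction (that $\xi$ a root of unity together with $p\in\BC[T_N(z)]$ force $p$ central on \emph{every} surface) one needs a genuinely topological input, namely the \emph{transparency} of the $T_N$-threaded knot, and this is where I expect the real work to lie.

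\emph{Necessity.} I would test a central $p$ on $\Sigma=\BT^2$. Write elements of $\CS(\BT^2)$ in the Frohman--Gelca basis $\{e_{a,b}\}$, indexed by $(a,b)\in\BZ^2$ up to overall sign, where $e_{a,b}=T_d(\gamma)$ with $d=\gcd(a,b)$ and $\gamma$ the connected simple closed curve of slope $b/a$; in particular $e_{n,0}=T_n(x)$ for the coordinate curve $x=e_{1,0}$. These multiply by the rule
\[ e_{a,b}\,e_{c,f}=t^{af-bc}\,e_{a+c,\,b+f}+t^{-(af-bc)}\,e_{a-c,\,b-f}. \]
Applying $p$ to the simple closed curve $x$ gives, by the Remark, the honest algebra element $p(x)=\sum_n c_n e_{n,0}$, where $\sum_n c_n T_n(z)$ is the expansion of $p$ in the Chebyshev basis of $\BC[z]$. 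Computing the commutator of $p(x)$ with $e_{c,f}$ from the formula above, every term carries the scalar $t^{nf}-t^{-nf}$; fixing $f=1$ and taking $c$ large, the basis elements $e_{n+c,1}$ are pairwise distinct and meet no cancelling term, so centrality at $t=\xi$ forces $c_n(\xi^{n}-\xi^{-n})=0$ for every $n\ge1$. As $p$ is non-constant, some $c_n\neq0$ with $n\ge1$, whence $\xi^{2n}=1$ and $\xi$ is a root of unity, giving (i). Moreover $c_n\neq0$ now forces $\xi^{2n}=1$, i.e. $N\mid n$, so $p=\sum_{k} c_{Nk}T_{Nk}(z)$; the Chebyshev identity $T_{Nk}(z)=T_k(T_N(z))$, immediate from $T_m(w+w^{-1})=w^m+w^{-m}$, rewrites this as a polynomial in $T_N(z)$, which is (ii).

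\emph{Sufficiency.} Here I would reduce the global claim to a single local fact. Call a framed knot $K$ \emph{transparent} in $\CS_\xi(M)$ if a strand may be slid freely from one side of it to the other, i.e. the over- and under-crossings of a strand with $K$ agree. Centrality of $p(L)$ in the cylinder $\Sigma\times[-1,1]$ is precisely the ability to slide $p(L)$, component by component, past any other link, so it suffices to prove that for any framed knot $K$ the thread $T_N(K)$ is transparent whenever $\xi^2$ has order $N$. Granting this, write $p=q(T_N)$; since $T_{Nk}(K)=T_k(T_N(K))$, any $T_{Nk}$-thread, and hence any combination $q(T_N(K))=p(K)$, is a superposition of parallel copies of the transparent $T_N$-thread and is itself transparent, yielding centrality for all links on all surfaces.

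Transparency is a local statement: it suffices to verify, in the skein module of a $3$-ball containing a single crossing together with the $T_N$-thread, that exchanging over for under leaves the class unchanged. I would resolve the crossings of the strand with the $j$ parallel copies in $K^{(j)}$ one at a time using the Kauffman relation \eqref{eq.skein1}, and track the resulting ``defect'' $D_n$ for the partial thread $T_n(K)$. The recursion $T_n=zT_{n-1}-T_{n-2}$ should propagate into a matching recursion for $D_n$ whose solution carries a factor of the shape $\xi^{n}-\xi^{-n}$, vanishing precisely when $\xi^{2n}=1$, in particular at $n=N$. I expect this cancellation --- the collapse of the defect at a root of unity --- to be the crux of the whole argument, both because it is the only place the arithmetic hypothesis $\mathrm{ord}(\xi^2)=N$ is genuinely used, and because keeping the bookkeeping of successive resolutions honest (framing factors, the normalization $T_0=2$, and the orientation-independence of the basis) is exactly where an elementary skein computation can go astray.
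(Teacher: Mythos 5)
Your necessity argument is sound but takes a genuinely different route from the paper's. You work on the closed torus and invoke the Frohman--Gelca basis and product-to-sum formula for $\CS(\BT^2)$ \cite{FG}; the commutator computation, the ``take $f=1$, $c$ large'' separation of basis vectors, and the identity $T_{Nk}=T_k\circ T_N$ are all correct and do yield (i) and (ii). The paper instead stays in an elementary, self-contained local model: the annulus $\mathbb A_{io}$ with one marked point on each boundary circle, whose relative skein module is $R[\mathfrak u^{\pm1}]$. Centrality is converted into the single relation $p(z)\bullet e = e\bullet p(z)$ by closing $\mathbb A_{io}$ into a punctured torus and checking injectivity of the closure map (Proposition \ref{prop.27}), and then \eqref{eq.trans1}, namely $T_k(z)\bullet e - e\bullet T_k(z)=(t^k-t^{-k})(\mathfrak u^k-\mathfrak u^{-k})$, yields exactly your conclusion $c_j(\xi^j-\xi^{-j})=0$. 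What the paper's route buys is that one computation simultaneously powers both directions of the theorem and the skew-transparency statement (Corollary \ref{cor.3}); what yours buys is avoiding the injectivity argument, at the price of importing the nontrivial product-to-sum theorem as a black box.

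The genuine gap is in your sufficiency half, and it sits precisely where you yourself locate the crux: transparency of the $T_N$-thread is never proved. You write that the recursion for the defect $D_n$ ``should propagate'' and that you ``expect'' a factor of the shape $\xi^n-\xi^{-n}$; that is a prediction, not an argument, and the entire direction rests on it. The prediction is correct, but the paper shows the defect-recursion bookkeeping you worry about is avoidable entirely: in $\CS(\mathbb A_{io})$ the map $f(L)=L\bullet e$, placing a skein of the unmarked annulus on one side of the transversal arc $e$, is an algebra homomorphism; resolving the single crossing gives $z\bullet e = t\,\mathfrak u + t^{-1}\mathfrak u^{-1}$, whence $T_n(z)\bullet e = t^n\mathfrak u^{n}+t^{-n}\mathfrak u^{-n}$ directly from \eqref{eTn}, and symmetrically $e\bullet T_n(z)=t^n\mathfrak u^{-n}+t^{-n}\mathfrak u^{n}$ (Proposition \ref{prop.transp1}); the defect is $(t^n-t^{-n})(\mathfrak u^n-\mathfrak u^{-n})$, which vanishes in $\CS_\xi$ exactly when $\xi^{2n}=1$, giving Corollary \ref{cor.4}. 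Note also that your local model is slightly off: a $3$-ball containing ``a single crossing together with the $T_N$-thread'' is not where the identity lives, since the sliding strand must pass the entire annular neighborhood carrying all the cables of $K$ at once; the correct local object is the relative skein module of the marked annulus, where Proposition \ref{prop.basis} supplies the basis $\{\mathfrak u^k\}$ needed to certify the identity. Your reduction from general $p\in\BC[T_N(z)]$ to the single thread $T_N$ is fine, since threading along $K$ is the algebra map $R[z]\cong\CS(\mathbb A)\to\CS(M)$ induced by a neighborhood embedding; so once the annulus computation is filled in, your proof closes.
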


\begin{remark}
We also  find a version of ``skew-centrality" when $\xi^{2N}=-1$ (see Section~\ref{sec.2}), which will be useful in this paper and elsewhere.
\end{remark}

\begin{remark} \label{rem.02}
Let us call a polynomial $p(z) \in \BC[z]$  {\em weakly central} at $\xi\in \BC^\times $ if for any oriented surface $\Sigma$ and any simple closed curve $K$ on $\Sigma$, $p(K)$ is central in the algebra $\CS_\xi(\Sigma)$. Then our proof will also show that Theorem \ref{thm.main1} holds true if one replaces ``central" by ``weakly central". It follows that being central is equivalent to being weakly central.
\end{remark}

\def\TT{ \mathbf{Ch}}

A remarkable result of Bonahon and Wong is the following.
\begin{thm} [Bonahon-Wong \cite{BW}]Let $M$ be an oriented 3-manifold, possibly with boundary.
Suppose $\xi^4$ is a root of unity of order $N$. Let $\ve= \xi^{N^2}$. There is a unique $\BC$-linear map $\TT: \CS_\ve(M) \to \CS_\xi(M)$
such that for any framed link $L\subset M$, $\TT(L)= T_n(L)$.
\label{thm.main2}
\end{thm}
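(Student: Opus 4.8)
The plan is to treat uniqueness as immediate and reduce existence to a local skein computation. Since $\CS_\varepsilon(M)$ is by definition spanned over $\mathbb{C}$ by isotopy classes of framed links, a $\mathbb{C}$-linear map out of it is determined by its values on links; hence $\mathbf{Ch}$ is unique if it exists, and it exists exactly when the assignment $L\mapsto T_N(L)$ on the free vector space on framed links descends to $\CS_\varepsilon(M)$. So I must verify that $L\mapsto T_N(L)$ respects the defining relations \eqref{eq.skein1} and \eqref{eq.skein2} evaluated at $t=\varepsilon$, with all images computed in $\CS_\xi(M)$. Isotopy invariance of $T_N(L)$ is free, since an isotopy of $L$ carries along the parallel copies used to form $T_N(L)$.

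Both relations are supported inside a ball $B\subset M$. For the disjoint-loop relation \eqref{eq.skein2} the reduction is clean: threading is multiplicative over disjoint unions, so $T_N(L\sqcup U)=T_N(L)\,T_N(U)$, where $T_N(U)$ is a scalar, namely $T_N$ evaluated on the unknot value $-(\xi^2+\xi^{-2})$ in $\CS_\xi(\mathbb{R}^3)$. By the Chebyshev identity $T_N(x+x^{-1})=x^N+x^{-N}$ with $x=-\xi^2$, the required relation becomes the scalar identity $T_N\big(-(\xi^2+\xi^{-2})\big)=-(\varepsilon^2+\varepsilon^{-2})$, which the hypotheses (that $\xi^4$ has order $N$ and $\varepsilon=\xi^{N^2}$, whence $\xi^{2N}=\pm1$) make true; here the remainder $T_N(L)$ is literally unchanged, so nothing further is needed.

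The crossing relation \eqref{eq.skein1} is the crux, and it does \emph{not} reduce to a fixed-boundary tangle identity, because threading acts component-wise: a component-changing smoothing lets the two legs leaving $B$ be threaded with independent parallel counts. For a knot with a self-crossing one has $T_N(L)=\sum_j a_j K^{(j)}$, carrying $j$ parallels on each leg, whereas for the splitting smoothing $T_N(L_-)=\sum_{j,k} a_j a_k\,K_1^{(j)}\sqcup K_2^{(k)}$ carries independent counts $j,k$. I would prove the target identity $T_N(L)=\varepsilon\,T_N(L_+)+\varepsilon^{-1}T_N(L_-)$ by resolving the threaded crossing directly in $\CS_\xi$ via the skein relation: a $j$-over-$j$ threaded crossing expands into terms joining the two bundles by planar (Temperley--Lieb) matchings, where the straight-through matchings rebuild the $L_+$-threading and the turn-back (cap-cup) matchings rebuild the $L_-$-threading, the latter automatically producing the independent counts $j,k$ and thereby reconciling the component-count mismatch. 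The essential claim is that the remaining intermediate matchings cancel at the root of unity and that the two surviving families carry the coefficients $\xi^{N^2}=\varepsilon$ and $\xi^{-N^2}=\varepsilon^{-1}$ --- which is exactly how $\varepsilon=\xi^{N^2}$ enters the statement.

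The main obstacle is this resolution computation and its root-of-unity cancellation. Several features make it delicate: $T_N$-threading is the weighted sum $\sum_j a_j K^{(j)}$ rather than a single $N$-cable, there are framing and writhe twist factors picked up whenever parallel strands cross, and the sign $\xi^{2N}=\pm1$ must be tracked so that the survivors are honestly $\varepsilon^{\pm1}$. I expect the Chebyshev recursion $T_n=zT_{n-1}-T_{n-2}$, together with the transparency/centrality already established in Theorem \ref{thm.main1} (which is the weaker ``vertical'' shadow of the statement needed here), to be what forces the intermediate terms to vanish and pins down the surviving coefficients. Checking that this cancellation is exact and that the two survivors assemble precisely into $\varepsilon\,T_N(L_+)$ and $\varepsilon^{-1}T_N(L_-)$, including the count-bookkeeping for the cap-cup terms, is the hard, root-of-unity-specific heart of the argument; uniqueness, isotopy invariance, the scalar loop relation, and globalizing the local resolution to an arbitrary $M$ should be comparatively routine.
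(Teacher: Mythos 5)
Your outer framing agrees with the paper's: uniqueness is immediate since links span $\CS_\ve(M)$, existence reduces to verifying \eqref{eq.001} and \eqref{eq.002} in $\CS_\xi(M)$, and your scalar computation for the disjoint-unknot relation is correct (it is the paper's \eqref{eq.101}, resting on Lemma \ref{lem.lambda}(d)). But for the crossing relation \eqref{eq.001} --- which is the entire content of the theorem --- you have not given a proof. Your ``essential claim,'' that in the Temperley--Lieb expansion of a threaded crossing the intermediate matchings cancel at the root of unity and the two surviving families carry coefficients $\xi^{\pm N^2}$, is precisely the ``miraculous cancellation'' that Bonahon and Wong's title refers to, and it is exactly the statement the paper proves as Proposition \ref{prop.main1} (for a knot with one self-crossing in a neighborhood handlebody, which is the cylinder over the twice punctured disk, $T_N(\gamma)=\xi^{-N^2}T_N(y)+\xi^{N^2}T_N(x_1)T_N(x_2)$). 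You assert this cancellation and then flag verifying it as ``the hard, root-of-unity-specific heart'' --- so the proposal is a plan whose core step is missing. Transparency/centrality alone (your suggested tool) does not produce the cancellation by direct TL bookkeeping; the paper uses it quite differently: transparency places $T_N(\gamma)$ in a joint eigenspace $W$ of the boundary operators $\Phi_0,\Phi_1,\Phi_3,\Phi_4$ acting on $\CS_\xi(\BD)=\BC[x_1,x_2,y]$, Lemmas \ref{lem.y-deg} and \ref{lem.x1x2} (via the arc filtrations and the eigenvalue computation of Proposition \ref{eigenvalue}) show such an element is determined up to a constant by its coefficients at $y^N$ and $x_1^Nx_2^N$, Lemma \ref{lem.degree} computes those two coefficients by the all-negative and all-positive smoothings of the $N^2$ crossings of $\gamma^N$ (this is where $\xi^{\pm N^2}$ actually enters --- consistent with your guess, but proved by an eigenvector argument, not by cancellation of matchings), and the constant is killed by evaluating in $\CS_\xi(\BR^3)$.

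A second, smaller gap: your mechanism for the mixed-component case (``cap-cup matchings automatically produce the independent counts $j,k$'') is unjustified and is where a naive cable-resolution would stall, since when the two strands at the crossing lie on different components the cables have independent sizes and there is no single ``$j$-over-$j$'' expansion to resolve. The paper handles this cleanly and you could adopt the trick: if the two strands of $L$ at the crossing belong to different components, then the strands of $L_+$ belong to one component, so one applies the same-component case (Proposition \ref{prop.main1}) to $L_+$ with a kink inserted, converts the kinked term to $T_N(L_-)$ by the framing-change formula \eqref{eq.100}, and solves for \eqref{eq.001} using $\ve^4=1$. With that reduction, the only thing left to prove is Proposition \ref{prop.main1} itself; as your proposal stands, that proposition is assumed rather than proved.
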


If $M= \Sigma \times [-1,1]$, then the map $\TT$ is an algebra homomorphism. Actually Bonahon and Wong only consider the case of $\CS(\Sigma)$, but their proof works also in the case of skein modules of  3-manifolds.
In their proof, Bonahon and Wong used the theory of quantum Teichm\"uller space of Chekhov and Fock \cite{CF} and Kashaev \cite{Kashaev}, and the quantum trace homomorphism developed in earlier work \cite{BW0}.
Bonahon and Wong asked for a proof using elementary skein theory. We will present here a proof of Theorem \ref{thm.main2}, using only elementary skein theory. The main idea is to use central properties (in more
general setting) and several  operators and filtrations on the skein modules  defined by arcs.

In general, the calculation of $\CS(M)$ is difficult. For some results on knot and link complements in $S^3$, see  \cite{Le,LT,Marche}. Note that if $\xi^{4N}=1$, then $\ve= \xi^{N^2}$ is a 4-th root of 1.
In this case the $\CS_\ve(M)$ is well-known and is related to character varieties of $M$. This makes Theorem \ref{thm.main2} interesting.
At $t=-1$, $\CS_{-1}(M)$ has an algebra structure and, modulo its nilradical, is
equal to the ring of regular functions on the $SL_2(\BC)$-character variety of $M$, see \cite{Bullock,PS,BFK}. For the case when $\ve$ is a primitive 4-th root of 1, see \cite{Sikora}.

\subsection{Plan of the paper} Section \ref{sec.1} is preliminaries on Chebyshev polynomials and relative skein modules. Section \ref{sec.2} contains the proof of Theorem \ref{thm.main1}.
Section \ref{sec.3} introduces the filtrations and operators on skein modules, and Sections \ref{sec.4} and \ref{sec.5} contain some calculations which are used in
Section \ref{sec.6}, where the main technical lemma about the skein module of the twice punctured torus is proved. Theorem \ref{thm.main2} is proved in Section \ref{sec.7}.

\subsection{Acknowledgements} The author would like to thank F. Bonahon, whose talk at the conference ``Geometric Topology at Columbia University
(August 12-16, 2013) has prompted the author to work on this project. The author also thanks  C.~Frohman,  A.~Sikora, and H.~Wong for helpful discussions.
The work is supported in part by NSF.

\def\cI{\mathcal I}
\def\Id{\mathrm{Id}}
\def\BA{\mathbb A}

\section{Ground ring, Chebyshev polynomials, and relative skein modules} \label{sec.1}
\subsection{Ground ring} Let $R=\BC[t^{\pm1}]$, which is a principal ideal domain. For an $R$-module and a
 non-zero complex number $\xi\in \BC^\times$ let $V_\xi$ be the $R$-module $V/(t-\xi)$.
 Then  $R_\xi \cong \BC$ as  $\BC$-modules, and $V_\xi$ has a natural structure of an $R_\xi$-module.

We will often use the constants
\be \lambda_k:= -(t^{2k+2} + t^{-2k-2}) \in R.
\ee
For example, $\lambda_0$ is the value of the unknot $U$ as a skein element.
\no{
Suppose $V$ is a free $R$-module of finite rank. An $R$-linear operator $\varphi: V\to V$ is {\em diagonalizable} if there is a basis of $V$ in which $\varphi$ is given by
a diagonal matrix. If $\varphi$ is diagonalizable, then $\varphi_\xi: V_\xi \to V_\xi$ is a diagonalizable.

\begin{lemma} Suppose $V$ is a free $R$-module of finite rank, and
$0= V_0 \subset V_1 \subset V_2 \subset \dots \subset V_k=V$ is a nested sequence of submodules of $V$ such that $V_i/V_{i-1}$ is $R$-free for each $i=1, \dots, k$.
Assume $\varphi: V\to V$ is an $R$-operator such that each $V_i$ is an invariant submodule, and the action of $\varphi$ on $V_i/V_{i-1}$ is  a scaler operator $a_i \, \Id$, $a_i \in  R$.
Assume further that the $a_i$'s are pairwise distinct. Then $\varphi$ is diagonalizable.
\end{lemma}
}

\subsection{Chebyshev polynomials} Recall that the Chebyshev polynomials of type 1 $T_n(z)$ and type 2 $S_n(z)$ are defined by
\begin{align*}
  T_0& =2 ,  T_1(z)= z ,  T_n(z) = z T_{n-1}(z) - T_{n-2}(z) \\
 S_0& =1 ,  S_1(z)= z ,  S_n(z) = z S_{n-1}(z) - S_{n-2}(z) .
\end{align*}

Here are some well-known facts. We drop the easy proofs.

\begin{lemma} (a) 
One has
\begin{align} \label{eTn}
T_n(u + u^{-1})& = u^n + u^{-n}\\
T_n& = S_n - S_{n-2}
\end{align}

(b) For a fixed positive integer $N$, the $\BC$-span of $\{T_{Nj}, j \ge 0\}$, is $\BC[T_N(z)]$, the ring of all $\BC$-polynomials in $T_N(z)$.
\end{lemma}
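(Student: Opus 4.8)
The plan is to base everything on the elementary substitution $z = u + u^{-1}$, which turns the two three-term recursions into recursions whose solutions are explicit geometric sums. For part (a), first identity, I would argue by induction on $n$. The cases $n=0$ and $n=1$ are immediate from $T_0 = 2$ and $T_1(z) = z$. For the inductive step, substitute $z = u + u^{-1}$ into $T_n = z T_{n-1} - T_{n-2}$ and expand $(u+u^{-1})(u^{n-1} + u^{-(n-1)})$; the cross terms $u^{n-2} + u^{-(n-2)}$ cancel against $T_{n-2}(u+u^{-1})$, leaving $u^n + u^{-n}$. For the second identity $T_n = S_n - S_{n-2}$, I would note that both sides satisfy the common recursion $f_n = z f_{n-1} - f_{n-2}$, so it suffices to match two initial values; using the conventions $S_{-1} = 0$ and $S_{-2} = -1$ forced by the recursion, one checks $S_0 - S_{-2} = 2 = T_0$ and $S_1 - S_{-1} = z = T_1$. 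Alternatively one can substitute $z = u + u^{-1}$, use the closed form $S_n(u+u^{-1}) = (u^{n+1} - u^{-(n+1)})/(u - u^{-1})$, and observe that the numerator of $S_n - S_{n-2}$ factors as $(u^n + u^{-n})(u - u^{-1})$.

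For part (b) the key tool is the product formula
$$ T_m T_n = T_{m+n} + T_{|m-n|}, $$
which follows at once from the first identity of part (a): writing each factor as $u^k + u^{-k}$ and multiplying gives $u^{m+n} + u^{-(m+n)} + u^{m-n} + u^{-(m-n)}$, after which one invokes $T_{-k} = T_k$. With this formula in hand I would prove the two inclusions by induction. For $\mathrm{span}\{T_{Nj} : j \ge 0\} \subseteq \BC[T_N]$, the special case $T_{N(j+1)} = T_N\, T_{Nj} - T_{N(j-1)}$ exhibits $T_{N(j+1)}$ as a polynomial in $T_N$ once $T_{Nj}$ and $T_{N(j-1)}$ are, with base cases $T_0 = 2$ and $T_N$. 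For the reverse inclusion, I would show $(T_N)^k \in \mathrm{span}\{T_{Nj}\}$ by induction on $k$: multiplying a combination of the $T_{Nj}$ by $T_N$ and applying the product formula again keeps everything inside this span.

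The computations are routine; the only points requiring care are the index conventions for the negative subscripts appearing in the second identity of part (a), and the boundary cases $j = 0, 1$ in the product-formula induction for part (b), where $T_0 T_N = 2 T_N$ rather than $T_N + T_{-N}$ in disguise. There is no genuine obstacle here, which is precisely why the author can afford to omit the proofs.
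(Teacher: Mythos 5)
Your proof is correct and complete: the substitution $z=u+u^{-1}$ with induction for part (a), and the product-to-sum formula $T_mT_n=T_{m+n}+T_{|m-n|}$ driving both inclusions in part (b), is precisely the standard argument, and you correctly handle the only delicate points (the conventions $S_{-1}=0$, $S_{-2}=-1$ making $T_n=S_n-S_{n-2}$ valid for $n=0,1$, and the boundary case $T_0T_N=2T_N$). The paper itself drops the proof as an easy well-known fact, so there is nothing to compare against; your write-up supplies exactly the omitted details.
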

Since $T_n(z)$ has leading term $z^n$, $\{T_n(z), n \ge 0\}$  is a $\BC$-basis of $\BC[z]$.



\def\cF{\mathcal F}
\subsection{Skein module of a surface} Suppose $\Sigma$ is a compact connected orientable 2-dimensional manifold with boundary.
 A knot in $\Sigma$ is {\em trivial} if it bounds a disk in $\Sigma$. Recall that $\CS(\Sigma)$ is the skein module $\CS(\Sigma \times [-1,1])$.
If $\partial \Sigma \neq \emptyset$, then $\CS(\Sigma)$ is a free $R$-module with basis the set of all links  in $\Sigma$ without trivial components, including the empty link, see \cite{PS}. Here a link in $\Sigma$ is considered as a framed link in
$\Sigma \times [-1,1]$ by identifying $\Sigma$ with $\Sigma \times 0$, and the framing at every point $P \in \Sigma \times 0$ is vertical, i.e. given by the unit positive tangent vector
of $P \times [-1,1] \subset \Sigma \times [-1, 1]$. 

The $R$-module  $\CS(\Sigma)$ has a natural $R$-algebra structure, where $L_1 \, L_2$ is obtained by placing $L_1$ on top of $L_2$.

It might happen that $\Sigma_1 \times [-1,1] \cong \Sigma_2 \times [-1, 1]$ with $\Sigma_1 \not \cong \Sigma_2$. In that case, $\CS(\Sigma_1)$ and $\CS(\Sigma_2)$ are the same as $R$-modules,
but the algebra structures may be  different.
\def\An{{\mathbb  A}}
\newcommand{\vect}[1]{\overrightarrow{#1}}
\subsection{Example: The annulus} Let $\An \subset \BR^2$ be the annulus $\An= \{  \vect x \in \BR^2,  1 \le | \vect x| \le 2\}$.
Let $z\in \CS(\An)$ be the core of the annulus, $z = \{ \vect{x}, |\vect x | = 3/2\}$.
Then $ \CS(\An) = R[z]$.


\def\cP{\mathcal P}
\subsection{Relative skein modules} {\em A marked surface} $(\Sigma, \cP)$ is a surface $\Sigma$ together with a finite set $\cP$ of points on its boundary $\partial \Sigma$. For such a marked surface,
a {\em relative framed link} is a 1-dimensional compact framed submanifold $X$ in $\Sigma \times [-1,1]$ such that $\partial X = \cP= X \cap \partial (\Sigma\times [-1,1])$, $X$ is perpendicular to $\partial (\Sigma\times [-1,1])$, and the framing at each point $P\in \cP= \partial X$ is vertical. The relative skein module $\CS(\Sigma,\cP)$ is defined as the $R$-module spanned by
isotopy class of relative framed links modulo the same skein relations \eqref{eq.skein1} and \eqref{eq.skein2}.  We will use the following fact.
\begin{proposition} \label{prop.basis}
(See \cite[Theorem 5.2]{PS}) The $R$-module $\CS(\Sigma,\cP)$ is  free  with basis the set of isotopy classes of relative links embedded in $\Sigma$ without trivial components.
\end{proposition}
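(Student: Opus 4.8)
The plan is to prove the proposition in two independent steps: first that the indicated set $B$ of isotopy classes of embedded relative links in $\Sigma$ without trivial components spans $\CS(\Sigma,\cP)$ over $R$, and then that $B$ is linearly independent. The spanning step is a routine diagrammatic reduction, whereas the independence step is the substantive part and will be obtained by constructing an explicit $R$-linear retraction of $\CS(\Sigma,\cP)$ onto the free $R$-module on $B$.

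For spanning I would represent an arbitrary relative framed link $X \subset \Sigma \times [-1,1]$ by a generic diagram $D$ on $\Sigma$: project $X$ to $\Sigma \times 0$ so that it has only transverse double points in the interior, meets $\partial\Sigma$ exactly in $\cP$ perpendicularly, and carries the vertical framing via the blackboard convention. I then induct on the number $c(D)$ of crossings. If $c(D)=0$ the diagram is an embedded relative link; deleting its trivial components one at a time via \eqref{eq.skein2} rewrites it as $\lambda_0^{\,m}$ times an element of $B$. If $c(D)>0$, applying \eqref{eq.skein1} at one crossing expresses $[X]$ as a combination of $t^{\pm1}$ times two diagrams each with $c(D)-1$ crossings, and the inductive hypothesis finishes. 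Hence $B$ spans.

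For linear independence I would build a map in the opposite direction. Let $F$ be the free $R$-module with basis $B$. For a generic diagram $D$ on $\Sigma$ with crossing set $C$, and a state $s\colon C \to \{+,-\}$ choosing an $A$- or $B$-smoothing at each crossing, let $D_s$ be the resulting crossingless diagram; it is an embedded relative link, and after discarding its trivial components it represents an element $b(s)\in B$, with $m(s)$ the number of trivial components removed. I then set
\[
\langle D \rangle \;=\; \sum_{s}\, t^{\,\tau(s)}\,\lambda_0^{\,m(s)}\, b(s) \;\in\; F ,
\]
where $\tau(s)$ is the signed count of $A$- and $B$-smoothings. This is the Kauffman bracket state sum taken with values in $F$ rather than in $R$. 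The crux is to verify that $\langle D\rangle$ depends only on the framed isotopy class of $X$ in $\Sigma\times[-1,1]$. For vertically framed links this amounts to regular isotopy, generated by planar isotopy of $\Sigma$-diagrams together with the Reidemeister II and III moves; the single-kink Reidemeister I move is excluded since it would change the framing. Invariance under II and III is checked by the standard local Kauffman-bracket computation; the only new feature on a surface is that a smoothing may produce a curve that is essential or null-homotopic depending on the ambient topology, but since removing a trivial component only multiplies by the central scalar $\lambda_0$, the local cancellations are identical to the planar case. Granting this, $D \mapsto \langle D\rangle$ descends to an $R$-linear map $\Phi\colon \CS(\Sigma,\cP)\to F$, and by construction $\Phi$ sends each $b\in B$ (represented by a crossingless embedded link, for which the only state is empty) to itself. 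Thus $\Phi$ is a left inverse of the natural surjection $\pi\colon F \twoheadrightarrow \CS(\Sigma,\cP)$ from the spanning step, forcing $\pi$ to be injective and hence an isomorphism; this simultaneously yields freeness and identifies $B$ as a basis.

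The main obstacle is precisely the well-definedness of $\langle\,\cdot\,\rangle$, concentrated in Reidemeister III invariance. Although the algebra is formally the same as for the planar Kauffman bracket, on $\Sigma$ one must keep track of which smoothed curves are null-homotopic and confirm that the bookkeeping of $\lambda_0$-factors is consistent across all three strands of a triple point and across the regions swept out by the move; handling relative arcs ending at the fixed points of $\cP$ alongside closed components adds cases but no essentially new difficulty. Once RIII is settled, RII and the reduction to crossingless form are comparatively mechanical.
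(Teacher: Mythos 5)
You should first be aware that the paper contains no proof of this proposition: it is quoted verbatim from Przytycki--Sikora (\cite[Theorem 5.2]{PS}), so there is no paper-internal argument to compare against. Your two-step plan --- spanning by resolving crossings inductively, and injectivity by constructing a state-sum retraction $\Phi\colon \CS(\Sigma,\cP)\to F$ onto the free module on the crossingless basis --- is essentially the standard proof of that cited theorem, and the overall architecture is sound: the spanning induction on crossing number is correct, the relation \eqref{eq.skein1} is respected by $\Phi$ by construction of the state sum, \eqref{eq.skein2} is respected by the $\lambda_0$-bookkeeping, and your observation that the RII/RIII cancellations are purely local (the small circle created in the RII computation lies in the disk supporting the move, hence is trivial in $\Sigma$ regardless of the ambient topology) is exactly the right point.

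There is, however, one genuine inaccuracy in the well-definedness step. You assert that for vertically (blackboard) framed links, isotopy in $\Sigma\times[-1,1]$ corresponds to regular isotopy of diagrams, i.e.\ planar isotopy together with RII and RIII only, ``the single-kink Reidemeister I move being excluded.'' This is false: regular isotopy preserves not only the writhe but also the rotation (Whitney) number of each component, whereas framed isotopy does not. By Trace's theorem, two diagrams of the same knot are regularly isotopic if and only if they agree in both writhe and winding number; consequently a diagram with a positive left-hand curl and one with a positive right-hand curl present the same framed knot but are not related by RII and RIII. The framed diagram calculus therefore requires, in addition to RII and RIII, the balanced RI moves (cancellation of a pair of opposite kinks, and the exchange of a left curl for a right curl of the same crossing sign). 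The fix is standard and short: compute that a single kink multiplies your $F$-valued bracket by $-t^{\pm 3}$ according to its crossing sign (independently of curl direction), so the factors cancel on any writhe-preserving combination of RI moves; only with this supplementary computation does $\langle\,\cdot\,\rangle$ descend to framed links. Everything else in your argument, including the treatment of arcs ending at $\cP$, goes through as you describe.
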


\section{Annulus with two marked points and central elements} 
\label{sec.2}
\def\Aio{{\mathbb A_{io}}}
\def\An{\mathbb A}
\subsection{Marked annulus}
Recall that  $\An \subset \BR^2$ is the annulus $\An= \{  \vect x \in \BR^2\mid 1 \le | \vect x| \le 2\}$. Let  $\Aio$ be the marked surface $(\An, \{P_1, P_2\})$, with two marked points $P_1= (0,1)$, $P_2=(0,2)$, which are on different boundary components. See Figure \ref{uu}, which also depicts the  the arcs $e$, $\uu$, $\uu^{-1}$.
\begin{figure}[htpb]
$$ \psdraw{uu}{3in} $$
\caption{The marked annulus $\Aio$, and the arcs $e$, $\uu$, $\uu^{-1}$}
\label{uu}
\end{figure}

For $L_1, L_2  \in \CS(\Aio)$ define the product $L_1 L_2 $ by placing $L_1$ inside $L_2$. Formally this means we first  shrink $\Aio \supset L_1$  by factor $1/2$, we get  $(\frac 12\Aio)  \supset (\frac 12 L_1)$, where $\frac 12\Aio $ is an annulus on the plane whose outer circle is
the inner circle of $\Aio$. Then $L_1 L_2$ is $(\frac12 L_1) \cup L_2 \subset (\frac 12\Aio) \cup \Aio$.
The identity   of $\CS(\Aio)$ is the presented by $e$, and $\uu^{-1} \uu =e = \uu \uu^{-1}$.


\begin{proposition}
The Kauffman bracket skein modules of $\Aio$ is
$ \CS(\Aio) = R[\uu^{\pm1}]$, the ring of Laurent $R$-polynomial in one variable $\uu$. In particular, $\CS(\Aio)$ is commutative.
\end{proposition}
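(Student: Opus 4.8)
The plan is to reduce everything to the basis theorem for relative skein modules and then to identify the resulting basis by hand. By Proposition \ref{prop.basis}, $\CS(\Aio)$ is a free $R$-module whose basis is the set of isotopy classes of relative links embedded in $\An$ having no trivial closed components. Thus the entire content of the proposition is (a) to enumerate these isotopy classes and (b) to check that the nesting product matches the multiplication of $R[\uu^{\pm1}]$.

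First I would analyze the components of such a relative link $X$. Since the marked set $\cP=\{P_1,P_2\}$ consists of exactly two points, one on each boundary circle of $\An$, the defining condition $\partial X=\cP$ forces $X$ to contain exactly one arc component, and that arc must join $P_1$ to $P_2$ since its two endpoints exhaust both marked points. Every other component of $X$ is then a simple closed curve in $\An$ disjoint from this arc.

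The key topological step is to show that no closed components can in fact occur. I would argue this by cutting: an arc joining the two \emph{distinct} boundary circles of the annulus is essential, and cutting $\An$ along it produces a disk $D$. Any closed component of $X$, being disjoint from the arc, lies in $D$, hence bounds a disk in $D\subset\An$, hence is trivial. Because the basis excludes trivial closed components, $X$ consists of the arc alone. It then remains to classify essential arcs from $P_1$ to $P_2$ up to isotopy rel endpoints, and such arcs are classified by their winding number around the core of $\An$, giving one class $\uu^n$ for each $n\in\BZ$ (with $\uu^0=e$); I would make this precise by lifting to the universal cover of $\An$, an infinite strip, where the winding number is a complete isotopy invariant. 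This yields the $R$-module identification $\CS(\Aio)\cong\bigoplus_{n\in\BZ} R\,\uu^n$. Finally, under the nesting product, stacking $\uu^m$ inside $\uu^n$ concatenates the two arcs at the shared middle circle into a single arc of total winding $m+n$, with no crossings to resolve, so $\uu^m\uu^n=\uu^{m+n}$; this is exactly the multiplication of $R[\uu^{\pm1}]$, giving the ring isomorphism, and commutativity is then immediate.

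The main obstacle I anticipate is the careful bookkeeping in the cutting argument and the winding-number classification, that is, making airtight both the reduction to a single arc plus only trivial closed curves and the claim that isotopy rel endpoints collapses all arcs onto the single integer parameter $n$. By contrast, the multiplicative identity $\uu^m\uu^n=\uu^{m+n}$ should require no skein-relation corrections, since the nesting of two arcs in disjoint half-annuli produces no crossings.
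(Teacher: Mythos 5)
Your proposal takes essentially the same approach as the paper: the paper's proof likewise consists of invoking Proposition \ref{prop.basis} and asserting that $\{\uu^k,\ k \in \BZ\}$ is a free $R$-basis of $\CS(\Aio)$, leaving the verification as easy. Your cutting argument ruling out nontrivial closed components, the winding-number classification of embedded arcs from $P_1$ to $P_2$, and the crossing-free computation $\uu^m\uu^n=\uu^{m+n}$ are exactly the details the paper omits, and they are correct.
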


\begin{proof}
Using Proposition \ref{prop.basis} one can easily show that the set $\{\uu^k, k \in \BZ\}$ is a free $R$-basis of $\CS(\Aio)$.
\end{proof}

\subsection{Passing through $T_k$} Recall that $\CS(\An)=R[z]$.
One defines  a left action and a right action of $\CS(\An)$ on $\CS(\Aio)$ as follows.
For $L \in \CS(\An), K \in \CS(\Aio)$ let $L \bullet K$ be the element in $\CS(\Aio)$ obtained by placing $L$ above $K$, and $K \bullet L \in \CS(\Aio)$ be the element in $\CS(\Aio)$ obtained by placing $K$ above $L$.
For example,
$$ e\bullet z= \psdraw{ez}{.4in}\ , \quad z\bullet e= \psdraw{ze}{.4in}$$

\begin{proposition} \label{prop.transp1}
One has
\begin{align}
T_k(z) \bullet e  & = t^k \uu^k + t^{-k} \uu^{-k} \label{e33a}\\
e \bullet T_k(z)   & = t^k \uu^{-k}  + t^{-k} \uu^k \label{e33b}\\
T_k(z) \bullet e - e \bullet T_k(z)  &= (t^k -t^{-k}) (\uu^k - \uu^{-k}).
\label{eq.trans1}
\end{align}

\end{proposition}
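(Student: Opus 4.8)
The crux of the proof is a single \emph{passing-through} computation, from which everything else follows formally. For each integer $m$ I would form $z \bullet \uu^m$ by placing the core circle $z$ above the arc $\uu^m$. Drawn as a spiral running from the inner to the outer boundary of $\An$, the arc $\uu^m$ has monotone radial coordinate, so it meets the core circle (at radius $3/2$) transversally in exactly one point; thus $z \bullet \uu^m$ has a unique crossing, in which $z$ passes over the spiral. Resolving this crossing by the Kauffman relation \eqref{eq.skein1}, the two smoothings reconnect the spiral so as to increase, respectively decrease, its winding number by one, producing $\uu^{m+1}$ and $\uu^{m-1}$; with the crossing sign determined by the figure this yields the passing-through identity $z \bullet \uu^m = t\,\uu^{m+1} + t^{-1}\uu^{m-1}$ for all $m \in \BZ$. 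In particular $z \bullet e = z \bullet \uu^0 = t\uu + t^{-1}\uu^{-1}$, and $T_0(z)\bullet e = 2e = 2$.

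With this identity in hand I would prove \eqref{e33a} by induction on $k$ via the Chebyshev recursion. Since $\bullet$ is a left action of the commutative algebra $\CS(\An)=R[z]$ and $T_k(z)=zT_{k-1}(z)-T_{k-2}(z)$, one has $T_k(z)\bullet e = z\bullet\bigl(T_{k-1}(z)\bullet e\bigr) - T_{k-2}(z)\bullet e$. Writing $A_k := t^k\uu^k + t^{-k}\uu^{-k}$, the passing-through identity gives $z\bullet A_{k-1} = t^k\uu^k + t^{k-2}\uu^{k-2} + t^{-(k-2)}\uu^{-(k-2)} + t^{-k}\uu^{-k} = A_k + A_{k-2}$, so the $A_k$ obey exactly the same recursion $A_k = z\bullet A_{k-1} - A_{k-2}$. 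As $A_0 = 2 = T_0(z)\bullet e$ and $A_1 = z\bullet e = T_1(z)\bullet e$, induction gives $T_k(z)\bullet e = A_k$, which is \eqref{e33a}.

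Identity \eqref{e33b} follows symmetrically. Conceptually it comes from the reflection $\tau\colon (x,s)\mapsto(x,-s)$ of $\An\times[-1,1]$, which reverses every crossing (hence sends $t\mapsto t^{-1}$), fixes the surface objects $e,\uu,\uu^{-1},z$, and reverses vertical stacking order so that $\tau(A\bullet B)=\tau(B)\bullet\tau(A)$; applying $\tau$ to \eqref{e33a} (and using that $T_k$ has integer coefficients, so $\tau(T_k(z))=T_k(z)$) turns the left side into $e\bullet T_k(z)$ and the right side into $t^{-k}\uu^k + t^k\uu^{-k}$, which is \eqref{e33b}. To avoid any framing bookkeeping under the reflection, one may instead run the identical induction using the reversed-crossing identity $\uu^m \bullet z = t^{-1}\uu^{m+1} + t\uu^{m-1}$ (the unique crossing is reversed when $z$ sits below the arc, so the coefficients $t$ and $t^{-1}$ are interchanged). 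Either way, subtracting \eqref{e33b} from \eqref{e33a} gives $t^k\uu^k + t^{-k}\uu^{-k} - t^k\uu^{-k} - t^{-k}\uu^k = (t^k-t^{-k})(\uu^k-\uu^{-k})$, establishing \eqref{eq.trans1}.

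The one delicate point, and the place where care is needed, is the base passing-through computation: one must confirm that the core circle and the spiral $\uu^m$ cross exactly once, and then fix the orientation of that crossing so that the winding-increasing smoothing carries the coefficient $t$ rather than $t^{-1}$ (and, if the reflection route is used, check whether $\tau$ fixes $\uu$ or interchanges $\uu$ and $\uu^{-1}$). These are matters of reading off the conventions in Figure~\ref{uu}. Once the $k=1$ case is pinned down to agree with \eqref{e33a}, all signs in the passing-through identity and in the symmetric step are forced, and the remainder is the formal induction above.
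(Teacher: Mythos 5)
Your proof is correct, but it takes a genuinely different route from the paper's. Both arguments rest on the same geometric input — resolving the unique crossing of the core circle placed over the spiral, which for $m=0$ gives $z\bullet e = t\,\uu + t^{-1}\uu^{-1}$ — but the paper then finishes in one line: it observes that $f(L)=L\bullet e$ is an \emph{algebra homomorphism} $\CS(\An)\to\CS(\Aio)=R[\uu^{\pm1}]$, so $T_k(z)\bullet e = T_k(t\uu+t^{-1}\uu^{-1})$, which equals $t^k\uu^k+t^{-k}\uu^{-k}$ by the identity $T_k(u+u^{-1})=u^k+u^{-k}$ of \eqref{eTn} applied with $u=t\uu$ in the commutative algebra $R[\uu^{\pm1}]$. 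You instead use only the module structure: the passing-through identity $z\bullet\uu^m=t\,\uu^{m+1}+t^{-1}\uu^{m-1}$ for all $m\in\BZ$ together with the three-term Chebyshev recursion, and an induction on $k$; your verification $z\bullet A_{k-1}=A_k+A_{k-2}$ and the base cases $A_0=2e$, $A_1=z\bullet e$ are correct, and your remark that the local crossing picture — hence the assignment of $t$ to the winding-increasing smoothing — is independent of $m$ once pinned down at $m=0$ is exactly the right way to handle the sign (note also that smoothing a crossing between two distinct components never creates a trivial loop, so no extra factors appear). What each approach buys: the paper's homomorphism property subsumes your passing-through identity, since it gives $z\bullet\uu^m=(z\bullet e)\,\uu^m$ at once, and yields a closed-form computation with no induction; but it requires verifying that stacking over $e$ is multiplicative with respect to the nesting product of $\CS(\Aio)$, a genuinely geometric isotopy that the paper flags as ``important to note.'' Your version needs only associativity of stacking, $(L_1L_2)\bullet K = L_1\bullet(L_2\bullet K)$, which is immediate, at the cost of checking the crossing resolution for every $m$. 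Finally, your fallback for \eqref{e33b} — rerunning the induction with $\uu^m\bullet z=t^{-1}\uu^{m+1}+t\,\uu^{m-1}$ rather than invoking the reflection $\tau$ — is the safer of your two options: the reflection reverses the prescribed vertical framing at the marked points $P_1,P_2$, which would require an extra (routine but unaddressed) isotopy argument in the relative skein module; the paper likewise simply declares the proof of \eqref{e33b} ``similar,'' and the subtraction yielding \eqref{eq.trans1} is the same in both treatments.
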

\begin{proof}
It is important to note that the map $f: \CS(\An)\to\CS(\Aio)$ given by  $f(L)= L \bullet e$ is an algebra homomorphism.

Resolve the only crossing point, we have
$$ z \bullet e =  \ \psdraw{ez}{.4in}\ =  t\  \psdraw{ez_plus}{.4in}\ + t^{-1} \psdraw{ez_minus}{.4in}\  = t \uu + t^{-1} \uu^{-1}.$$
Hence,
\begin{align*}
 T_k(z) \bullet e & = T_k (t \uu + t^{-1} \uu^{-1})  \quad \text{because $f$ is an algebra homomorphism}\\
 & = t^k \uu^k + t^{-k} \uu^{-k} \quad \text{ by \eqref{eTn}}.
\end{align*}
This proves \eqref{e33a}. The proof of \eqref{e33b} is similar, while \eqref{eq.trans1} follows from \eqref{e33a} and \eqref{e33b}.
\end{proof}

\begin{corollary} Suppose $\xi^{2N}=1$. Then $T_N(z)$ is central at $\xi$.
\label{cor.4}
\end{corollary}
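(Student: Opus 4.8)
The plan is to derive the global centrality of $T_N(L)$ from the single local identity \eqref{eq.trans1} of Proposition \ref{prop.transp1}, specialized at $t=\xi$. The arithmetic input comes first: from $\xi^{2N}=1$ we get $(\xi^N)^2=1$, hence $\xi^{-N}=\xi^N$ and so the scalar $t^N-t^{-N}$ vanishes at $t=\xi$. Setting $k=N$ in \eqref{eq.trans1} and reducing modulo $(t-\xi)$ therefore gives
$$ T_N(z)\bullet e = e\bullet T_N(z) \quad \text{in } \CS_\xi(\Aio). $$
This is the essential local statement: inside the marked annulus, $T_N$ of the core loop may be pushed from above the transverse arc $e$ to below it without changing the skein class at $\xi$. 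Note that \eqref{eq.trans1} treats $T_N(z)$ as a single element, so all $N$ Chebyshev-weighted parallels slide across $e$ simultaneously.

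Next I would promote this to a global sliding move. The point is that $\Aio$, with core $z$ and radial arc $e$, is the universal local model for one transverse intersection between a loop and a strand: if $K$ is a component of $L$ presented as a curve on $\Sigma$, a regular neighborhood of $K$ in $\Sigma$ is an annulus having $K$ as its core, and an arc $\beta$ of a second diagram passing once through this neighborhood is modeled by $e$. Under such an embedding $\Aio \hookrightarrow \Sigma$, the elements $T_N(z)\bullet e$ and $e\bullet T_N(z)$ become the two height-orderings ``$T_N(K)$ above $\beta$'' and ``$\beta$ above $T_N(K)$'', so the vanishing above says precisely that $T_N(K)$ may be slid past $\beta$ at that crossing.

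Finally I would run the global argument. By $\BC$-linearity it suffices to verify $T_N(L)\,w = w\,T_N(L)$ for each link $w$ representing a basis element of $\CS_\xi(\Sigma)$. Here $T_N(L)\,w$ is $T_N(L)$ stacked above $w$ and $w\,T_N(L)$ is $w$ stacked above $T_N(L)$, and I would pass from one to the other by isotoping $T_N(L)$ downward through $w$. This isotopy meets $w$ in finitely many crossings, each between a strand of $L$ and a strand of $w$; at every such crossing the configuration is the local model above, so the sliding move applies and contributes a difference equal to the right-hand side of \eqref{eq.trans1}, which is $0$ at $\xi$. Telescoping over all crossings yields $T_N(L)\,w = w\,T_N(L)$; for a multicomponent $L$ one treats each component in turn, and since the components of $L$ are never moved relative to one another, only $L$-versus-$w$ crossings arise. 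The main obstacle is the bookkeeping in this last step: making rigorous that the transition from $T_N(L)$-above-$w$ to $w$-above-$T_N(L)$ factors through a finite sequence of elementary moves, each genuinely an embedded copy of the $\Aio$-model, and that applying $T_N$ to the whole loop $K$ is compatible with localizing each crossing to an annular neighborhood. Once that correspondence is set up, centrality follows at once from the vanishing of $t^N-t^{-N}$ at $\xi$.
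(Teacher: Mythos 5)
Your proposal is correct and follows essentially the same route as the paper: specialize \eqref{eq.trans1} at $k=N$, use $\xi^N=\xi^{-N}$ to kill the factor $t^N-t^{-N}$, and conclude $T_N(z)\bullet e = e\bullet T_N(z)$ in $\CS_\xi(\Aio)$, which the paper then says ``easily implies'' centrality -- your globalization via embedded copies of the $\Aio$-model at each transverse passage is exactly the implication $(i)\Rightarrow(ii)\Rightarrow(iii)$ that the paper declares clear in Proposition \ref{prop.27}. The extra care you take with the crossing-by-crossing bookkeeping (all Chebyshev-weighted parallels sliding simultaneously, components of $L$ handled one at a time) is a legitimate expansion of that step rather than a different argument.
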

\begin{proof} We have $\xi^N= \xi^{-N}$ since  $\xi^{2N}=1$. Then   \eqref{eq.trans1} shows that
$ T_N(z) \bullet e = e \bullet T_N(z)$, which easily implies the centrality  of $T_N(z)$.
\end{proof}
\begin{remark}
The corollary was first proved by Bonahon and Wong  \cite{BW} using another method.
\end{remark}

\def\Tp{{\mathbb T}_{\mathrm{punc}}}
\subsection{Transparent elements}
We say that $p(z)\in \BC[z]$ is {\em transparent} at  $\xi$ if for any 3 disjoint framed knots $K, K_1, K_2$ in any oriented 3-manifold $M$, $p(K)\cup K_1=p(K)\cup K_2$
in $\CS_\xi(M)$, provided that $K_1$ and $K_2$ are isotopic in $M$. Note that in general, $K_1$ and $K_2$ are not isotopic in $M \setminus K$.
\begin{proposition}
The following are equivalent

(i) $p(z) \bullet e = e \bullet p(z)$ in $\CS_\xi(\Aio)$.

(ii) $p(z)$ is transparent at $\xi$.

(iii) $p(z)$ is central at $\xi$.

\label{prop.27}
\end{proposition}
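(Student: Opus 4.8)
The plan is to prove the cyclic chain of implications (i) $\Rightarrow$ (ii) $\Rightarrow$ (iii) $\Rightarrow$ (i), which gives all three equivalences. The geometric heart of the matter is that the marked annulus $\Aio$ is the \emph{universal local model} for a single strand running across a knot. Indeed, a regular neighborhood $N(K)$ of a framed knot $K$ in any oriented $M$ is a solid torus, and one may choose an identification $N(K)\cong \An\times[-1,1]$ under which $K$ is the core of $\An$; then a short piece of a second knot crossing $K$ transversally once becomes exactly the arc $e$, and the decoration $p(K)$ becomes $p(z)$. With this identification, $p(z)\bullet e$ and $e\bullet p(z)$ are the two skeins in which the crossing strand runs, respectively, underneath and over the decorated core. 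Gluing in the exterior $M\setminus \mathrm{int}\,N(K)$ induces an $R$-linear map $\CS(\Aio)\to\CS(M)$, hence $\CS_\xi(\Aio)\to\CS_\xi(M)$, which is the device I will use to transport the identity (i) into $M$.

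For (i) $\Rightarrow$ (ii), given disjoint knots $K,K_1,K_2$ with $K_1$ isotopic to $K_2$ in $M$, I would put the isotopy in generic position so that the moving knot crosses $K$ transversally at finitely many isolated times; between these times the class of $p(K)\cup K_t$ in $\CS_\xi(M)$ is unchanged, being an isotopy in $M\setminus K$. At each crossing time I excise a neighborhood $N(K)\cong\An\times[-1,1]$ arranged as above, so that the elementary move ``strand below $K$'' $\to$ ``strand above $K$'' is precisely the image, under the gluing map $\CS_\xi(\Aio)\to\CS_\xi(M)$, of the equation $p(z)\bullet e=e\bullet p(z)$ supplied by (i). Pushing (i) forward shows each crossing leaves $p(K)\cup K_t$ unchanged, whence $p(K)\cup K_1=p(K)\cup K_2$. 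I expect the main obstacle to sit here: making the neighborhood identification and the framing/marked-point bookkeeping precise, and confirming that a general crossing really is modeled by a single copy of $e$ in $\Aio$, so that one never needs a purely local (ball-supported) version of the move, which would fail for general $p$. It is exactly the global structure of $K$ as a closed loop, captured by the full core of $\Aio$, that makes the argument work.

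For (ii) $\Rightarrow$ (iii), to show $p(L)$ is central in $\CS_\xi(\Sigma)$ it suffices to prove $p(L)\cdot J=J\cdot p(L)$ for every framed link $J$, since such $J$ span $\CS_\xi(\Sigma)$. The two products place $J$ below and above $p(L)$ in $\Sigma\times[-1,1]$, and $J$ at the bottom level is ambient isotopic in $\Sigma\times[-1,1]$ to $J$ at the top level by sliding it upward through the middle level where $L$ lies. This isotopy meets $L$ only where strands of $J$ cross strands of $L$, and each such crossing is a transparency move. I would first upgrade transparency from a single knot to a decorated link $p(L)$ with disjoint spectator components present (immediate, since each move is supported in a ball disjoint from the spectators and skein identities are local), and then apply it to conclude $p(L)\cdot J=J\cdot p(L)$.

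For (iii) $\Rightarrow$ (i), I would embed $\Aio$ into a compact surface $\Sigma$ with $\partial\Sigma\neq\emptyset$ so that $z$ becomes a simple closed curve $K$, the arc $e$ closes up (along one fixed exterior arc) to a simple closed curve $J$ meeting $K$ once, and each $\uu^{k}$ closes up to a distinct nontrivial simple closed curve. The closing arc lies outside $N(K)$ and is common to both sides, so the gluing map sends $p(z)\bullet e\mapsto p(K)\cdot J$ and $e\bullet p(z)\mapsto J\cdot p(K)$; centrality gives $p(K)\cdot J=J\cdot p(K)$, so the two images coincide. Since $\partial\Sigma\neq\emptyset$, $\CS(\Sigma)$ is free on links without trivial components by Proposition~\ref{prop.basis}, and the closed-up curves $\{\uu^{k}\}$ map to distinct basis elements; hence the map is injective on $\CS_\xi(\Aio)=\BC[\uu^{\pm1}]$, and the equality of images pulls back to $p(z)\bullet e=e\bullet p(z)$. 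As a concrete check, since $L\mapsto L\bullet e$ is an algebra homomorphism one computes $p(z)\bullet e=p(\xi\uu+\xi^{-1}\uu^{-1})$ and $e\bullet p(z)=p(\xi\uu^{-1}+\xi^{-1}\uu)$, so that (i) amounts to the symmetry of $p(\xi\uu+\xi^{-1}\uu^{-1})$ under $\uu\leftrightarrow\uu^{-1}$, a useful reformulation for the other direction as well.
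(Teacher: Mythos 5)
Your proposal is correct and takes essentially the same route as the paper: your closing-up construction for (iii) $\Rightarrow$ (i) is precisely the paper's gluing of a 1-handle to $\An$ to form the punctured torus $\Tp$, with the closure map $\iota(K)=K\cup\beta$ whose injectivity follows, exactly as you argue, from the freeness of the skein module of a surface with boundary on links without trivial components (Proposition \ref{prop.basis}), after which centrality of $p(z)$ against $\iota(e)$ pulls back to $p(z)\bullet e=e\bullet p(z)$. The implications (i) $\Rightarrow$ (ii) $\Rightarrow$ (iii), which the paper simply declares clear, are fleshed out correctly in your crossing-by-crossing isotopy argument.
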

\begin{proof}
It is clear that $(i) \Rightarrow (ii) \Rightarrow (iii)$. Let us prove $(iii) \Rightarrow (i)$.
\begin{figure}[htpb]
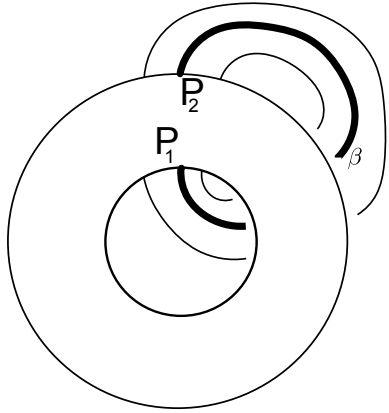

$$ \psdraw{pTorus}{2in} $$
\caption{ The core $\beta$ connects  $P_1$ and $P_2$ in $\Tp$}
\label{pTorus}
\end{figure}

By gluing a 1-handle to $\An$ we get a punctured torus $\Tp$  as in Figure \ref{pTorus}.
Here
the base of the 1-handle is glued to a small neighborhood of $\{P_1 \cup P_2\}$ in $\partial \An$, and the core of the 1-handle is an arc $\beta$ connecting $P_1$ and $P_2$.
Let  $\iota: \CS(\Aio) \to \CS(\Tp)$ be $R$-map which is the closure by $\beta$, i.e.  $\iota(K) = K \cup \beta$. Then $\iota(\uu^k)$ is a knot in $\Tp$ for every $k \in \BZ$,  and $\iota(\uu^k)$ is not isotopic to
$\iota(\uu^l)$ if $ k \neq l$. Since $\{ \uu^k, k \in \BZ\}$ is an $R$-basis of $\CS(\Aio)$ and the isotopy classes of links in $\Tp$ form an $R$-basis of $\CS(\Tp)$, $\iota$ is injective.

 Assume (iii). Then $ p(z) \iota (e) = \iota(e) p(z)$, or $\iota (p(z) \bullet e) = \iota ( e \bullet p(z))$. Since $\iota$ is injective, we have  $p(z) \bullet e = e \bullet p(z)$.
\end{proof}

\subsection{Proof of Theorem \ref{thm.main1}}
 The ```if" part has been proved, see Corollary \ref{cor.4}. Let us prove the ``only if part". Assume that $p(z)$ is central at $\xi$ and having degree $k\ge 1$. Since $\{T_j(z), j\ge 0\}$ is a basis of $\BC[z]$, we
can write
\be
 p(z) =  \sum_{j=0}^{k} c_j T_j(z), \quad c_j \in \BC, c_k \neq 0.
 \label{e62}
 \ee
By Proposition \ref{prop.27}, $p(z) \bullet e - e \bullet p(z)=0$.  Using expression \eqref{e62} for $p(z)$ and \eqref{eq.trans1}, we get
\begin{align*}
0 = p(z) \bullet e - e \bullet p(z) =   \sum_{j=0}^{k} c_j (\xi^{j} - \xi^{-j})(\uu^j - \uu^{-j}).
\end{align*}

Because $\{\uu^{j}, j \in \BZ\}$ is a basis of $\CS_\xi(\Aio)$, the coefficient of each $\uu^j$ on the right hand side is 0. This means,
\be
c_j =0  \quad \text{or } \  \xi^{2j} =1, \quad \forall j. \label{e33}
\ee
Since $c_k \neq 0$, we have $\xi^{2k}=1$.  Since $k \ge 1$, this shows $\xi^2$ is a root of unity of some order $N$. Then \eqref{e33} shows that $c_j=0$ unless $N|j$.
Thus, $p(z)$ is a $\BC$-linear combination of $T_{j}$ with $N |j$. This completes the proof of Theorem \ref{thm.main1}.
\subsection{Skew transparency}   One more consequence of Proposition \ref{prop.transp1} is the following.
\begin{corollary} Suppose $\xi^{2N}=-1$. Then in $\CS_\xi(\Aio)$,
$$ T_N(z) \bullet e =  - e \bullet T_N(z) .$$
\label{cor.3}
\end{corollary}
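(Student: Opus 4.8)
The plan is to obtain the identity by directly specializing formulas \eqref{e33a} and \eqref{e33b} of Proposition \ref{prop.transp1} at $k=N$ and then reducing modulo $(t-\xi)$. First I would record that in $\CS(\Aio)=R[\uu^{\pm1}]$ these two formulas read $T_N(z)\bullet e = t^N\uu^N + t^{-N}\uu^{-N}$ and $e\bullet T_N(z) = t^N\uu^{-N} + t^{-N}\uu^N$. Passing to $\CS_\xi(\Aio)=\CS(\Aio)/(t-\xi)$ simply replaces $t$ by $\xi$ throughout.

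The only algebraic input is the hypothesis. Since $\xi^{2N}=-1$, we have $(\xi^N)^2=-1$, and therefore $\xi^{-N}=\xi^N/(\xi^N)^2 = -\xi^N$. Substituting this relation into the two specialized expressions, each collapses to a scalar multiple of $\uu^N-\uu^{-N}$:
$$ T_N(z)\bullet e = \xi^N\uu^N - \xi^N\uu^{-N} = \xi^N(\uu^N-\uu^{-N}), $$
$$ e\bullet T_N(z) = \xi^N\uu^{-N} - \xi^N\uu^N = -\xi^N(\uu^N-\uu^{-N}). $$
Comparing the two lines yields $T_N(z)\bullet e = -\,e\bullet T_N(z)$, as claimed.

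I do not expect a genuine obstacle here: all the work has already been carried out in Proposition \ref{prop.transp1}, and the argument is a one-line piece of sign bookkeeping. It is instructive to note that this runs exactly parallel to the proof of Corollary \ref{cor.4}: there the hypothesis $\xi^{2N}=1$ forces $\xi^{-N}=\xi^N$, so the two expressions coincide and $T_N(z)$ is transparent (hence central); here the hypothesis $\xi^{2N}=-1$ instead forces the sign flip $\xi^{-N}=-\xi^N$, which turns the equality into a skew-equality. The only mild point worth checking is that $\xi^N$ is a genuine square root of $-1$, in particular nonzero, so that dividing by $(\xi^N)^2$ is legitimate; this is immediate from $\xi\in\BC^\times$.
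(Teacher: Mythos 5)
Your proof is correct and is exactly the argument the paper intends: Corollary \ref{cor.3} is presented as an immediate consequence of Proposition \ref{prop.transp1}, obtained by specializing \eqref{e33a} and \eqref{e33b} at $k=N$ and using the sign relation $\xi^{-N}=-\xi^{N}$ forced by $\xi^{2N}=-1$. Your observation that this runs parallel to Corollary \ref{cor.4} (where $\xi^{2N}=1$ instead gives $\xi^{-N}=\xi^{N}$) likewise matches the paper's viewpoint.
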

This means every time we move $T_N(K)$ passed through a component of a link $L$, the value of the skein gets multiplied by $-1$. Following is a precise statement.

Suppose $K_1$ and $K_2$ are knots in a 3-manifold $M$. Recall that an
 isotopy between $K_1$ and $K_2$ is a smooth map $H:S^1 \times [1,2] \to M$ such that for each $t \in [1,2]$, the map $H_t:S^1 \to M$ is an embedding, and
 the image of $H_i$ is $K_i$ for $i=1,2$. Here $H_t(x)= H(x,t)$. For a knot $K \subset M$ let $I_2(H,K)$ be the mod 2 intersection number of $H$ and $K$. Thus, if $H$ is transversal to $K$ then $I_2(H,K)$ is   the number of points in the
 finite set $H^{-1}(K)$ modulo 2.

\begin{definition}
Suppose $\mu= \pm 1$. A polynomial  $p(z) \in \BC[z]$ is called $\mu$-transparent at $\xi\in \BC^\times$ if for any 3 disjoint framed knots $K, K_1, K_2$ in any oriented 3-manifold $M$, with $K_1$ and $K_2$ connected by an isotopy $H$, one has
the following equality in $\CS_\xi(M)$:
$$ p(K) \cup K_1 = \mu^{I_2(H,K)} [p(K) \cup K_2].$$
\end{definition}
From Corollary \ref{cor.3} we have

\begin{corollary} Assume $\xi^{4N}=1$. Then $\mu:=\xi^{2N}=\pm 1$, and $T_N(z)$ is $\mu$-transparent.
\label{cor.5}
\end{corollary}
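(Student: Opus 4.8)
The plan is to handle the two possible values of $\mu$ separately and, in the harder case, to promote the local identity of Corollary \ref{cor.3} to the global statement by cutting a generic isotopy into elementary crossing moves. First I note that $\mu^2=\xi^{4N}=1$, so $\mu=\pm1$. If $\mu=1$ then $\xi^{2N}=1$, so $T_N(z)$ is central at $\xi$ by Corollary \ref{cor.4}, and hence transparent by Proposition \ref{prop.27}. Transparency is precisely $1$-transparency, since the factor $\mu^{I_2(H,K)}$ equals $1$ for every isotopy $H$; this settles the case $\mu=1$.

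Now suppose $\mu=-1$, i.e. $\xi^{2N}=-1$, so that Corollary \ref{cor.3} applies, and write $p=T_N$. Let $K,K_1,K_2$ and the isotopy $H:S^1\times[1,2]\to M$ be as in the definition. After a small perturbation I would take $H$ transverse to $K$, so that $H^{-1}(K)$ is a finite set, and after a further general-position perturbation in the $t$-direction I would arrange that these points occur at distinct times $1<t_1<\dots<t_m<2$ with $m\equiv I_2(H,K)\pmod 2$. On each complementary subinterval of $[1,2]$ the knot $K_t$ misses $K$, so there $H$ is an isotopy inside $M\setminus K$; consequently $p(K)\cup K_t$ is unchanged across such a subinterval, because for knots isotopic in $M\setminus K$ the framed links $K^{(j)}\cup K_t$ are isotopic in $M$ for every $j$. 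Hence it suffices to show that crossing a single $t_i$ multiplies $p(K)\cup K_t$ by $\mu$; the total factor is then $\mu^m=\mu^{I_2(H,K)}$ because $\mu=\pm1$, and concatenation yields $p(K)\cup K_1=\mu^{I_2(H,K)}[p(K)\cup K_2]$.

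For a single crossing at time $t_0$, one strand of $K_t$ passes once through $K$. I would model this inside a tubular neighborhood $N(K)\cong\An\times[-1,1]$ of $K$, identifying $K$ with the core $z$ (with matching framing) and the crossing strand with the arc $e$, so that the two endpoints of the arc lie on $\partial N(K)$ while the rest of $K_t$ is held fixed outside $N(K)$. Filling in this fixed exterior defines an $R$-linear gluing homomorphism $\Phi:\CS_\xi(\Aio)\to\CS_\xi(M)$ with $\Phi(z)=K$, hence $\Phi(T_N(z))=p(K)$, and $\Phi(e)=$ the crossing arc. Choosing the height function on $N(K)$ so that the two sides of $K$ across which the strand sweeps correspond to ``above'' and ``below'', the configurations just before and just after the crossing are the images under $\Phi$ of $e\bullet T_N(z)$ and $T_N(z)\bullet e$ (in some order). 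Applying $\Phi$ to the identity $T_N(z)\bullet e=-e\bullet T_N(z)$ of Corollary \ref{cor.3} then gives $p(K)\cup K_{t_0^-}=\mu\,[p(K)\cup K_{t_0^+}]$, which is exactly the per-crossing sign needed.

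The transversality and general-position perturbations of $H$, and the invariance of $p(K)\cup K_t$ under isotopies avoiding $K$, are routine differential topology. The main obstacle is the construction in the third paragraph: checking that the gluing map $\Phi$ is well defined (that it respects the skein relations \eqref{eq.skein1}, \eqref{eq.skein2} and the framing and marked-point data along $\partial N(K)$) and that a single elementary crossing move is genuinely realized as the image under $\Phi$ of the pair $T_N(z)\bullet e$, $e\bullet T_N(z)$. Once that identification is secured, Corollary \ref{cor.3} supplies the sign and the global statement follows.
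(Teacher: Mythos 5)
Your proof is correct and takes essentially the same route as the paper: there the corollary is deduced directly from the local annulus identities $T_N(z)\bullet e = \mu\,(e\bullet T_N(z))$ of Proposition \ref{prop.transp1} (Corollary \ref{cor.4} for $\mu=1$, Corollary \ref{cor.3} for $\mu=-1$), with precisely the local-to-global argument you spell out — perturbing the isotopy to meet $K$ transversally and applying the identity in a tubular neighborhood of $K$ at each passage — left implicit. The transversality, gluing-homomorphism, and crossing-model verifications you flag at the end are exactly the routine steps the paper leaves to the reader.
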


A special case is the following. Suppose $D \subset M$ is a disk   in $M$ with $\partial D = K$, and a framed link $L \subset M$ is disjoint from $K$. Then, if $\xi^{2N}= \mu =\pm 1$, one has
\be
 K \cup T_N(L) = \mu^{I_2(D,L)} \lambda_0 \, T_N(L) \quad \text{in \ }  \CS_\xi(M).
 \ee
Here $\lambda_0= -(\xi^2 + \xi^{-2})$ is the value of trivial knot in $\CS_\xi(M)$.

\def\BD{\mathbb D}
\def\All{\An_{oo}}
\def\cD{\mathcal D}

\section{Filtrations of skein modules} \label{sec.3}
Suppose $\Phi$ is a link in $\partial M$. We define an $R$-map $\Phi: \CS(M) \to \CS(M)$ by $\Phi(L) = \Phi \cup L$.

\subsection{Filtration by an arc} \label{sec.filtration}
Suppose $\al$ is an arc properly embedded in a marked surface $(\Sigma, \cP)$ with $\partial \Sigma \neq \emptyset$.
Assume the two boundary points of $\al$, which  are on the boundary of $\Sigma$, are  disjoint from the marked points.
Then $\cD_\al:=\al\times [-1,1]$ is a disk properly embedded in $\Sigma \times [-1,1]$, with boundary $\Phi_\al =\partial (\al \times [-1,1])=
(\al \times \{-1, 1\}) \cup (\partial \al \times [-1,1])$.

\def\fil{\mathrm{fil}}

Let $\cF^\al_k=\cF^\al_k( \CS(\Sigma) )$ be the $R$-submodule of $\CS(\Sigma)$ spanned by all relative links which intersect with $\cD_\al$ at less than or equal to $k$ points.
For $L \in \CS(\Sigma )$, we define $\fil_\al(L) = k$ if $L \in \cF^\al_k \setminus \cF^\al_{k-1}$. The filtration is compatible with the algebra structure, i.e.
$$ \fil_\al(L_1 L_2) \le \fil_\al(L_1) + \fil_\al(L_2).$$
\begin{remark}
A similar filtration was used in \cite{Marche}  to calculate the skein module of torus knot complements.
\end{remark}
A convenient way to count the number of intersection points of a link $L$ with $\cD_\al$ is to count the intersection points of the diagram of $L$ with $\al$: Let  $D$ be the vertical projection of $L$ onto $\Sigma$. In general
position  $D$ has only singular points of type double points,  and we assume further that  $D$ is transversal to $\al$.
In that case, the  number of intersection points of $L$ with $\cD_\al$ is equal to the number of intersections of $D$ with $\al$, where each intersection point of $\al$ and $D$ at a double point of $D$ is counted twice.


Recall that $\Phi_\al(L) = L \cup \Phi_\al$, where $\Phi_\al$ is the boundary of the disk $\cD_\al= \al \times [-1,1]$.
It is clear that  $\cF^\al_k$ is $\Phi_\al$ invariant, i.e. $\Phi_\al(\cF^\al_k) \subset \cF^\al_k$. It turns out that  the action of $\Phi_\al$ on the quotient $\cF^\al_k/\cF^\al_{k-1}$ is very simple.
Recall that $\lambda_k = - (t^{2k+2} + t^{-2k-2})$.
\begin{proposition}\label{eigenvalue}
For $k\ge 0$, the action of $\Phi_\al$ on $\cF^\al_k/\cF^\al_{k-1}$ is $\lambda_k$ times the identity.
\end{proposition}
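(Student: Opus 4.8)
The plan is to exploit the \emph{locality} of the skein relations to collapse the statement to a single universal constant, and then to evaluate that constant in a convenient model. First I would note that a basis of the quotient $\cF^\al_k/\cF^\al_{k-1}$ is represented by relative links $L$ meeting $\cD_\al$ transversally in exactly $k$ points. In a product neighbourhood $\al\times[-1,1]\times(-\delta,\delta)$ of the disk, such an $L$ looks like $k$ parallel vertically framed strands piercing $\cD_\al$, a picture that is the \emph{same} for every such $L$, and $\Phi_\al=\partial\cD_\al$ lies entirely in this neighbourhood. Since the relations \eqref{eq.skein1}--\eqref{eq.skein2} only alter $L$ inside this neighbourhood, $\Phi_\al(L)=\Phi_\al\cup L$ equals, modulo links meeting $\cD_\al$ in fewer than $k$ points (that is, modulo $\cF^\al_{k-1}$), a universal constant $c_k\in R$ times $L$: the ``straight through'' resolution recovers $L$ itself, and every other resolution caps off or reconnects strands. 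Thus the proposition reduces to showing $c_k=\lambda_k$, and $c_k$ may be computed in any single example containing $k$ parallel strands through a disk.

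Next I would compute $c_k$ in the solid torus $\mathbb A\times[-1,1]$, where $\CS(\mathbb A)=R[z]$ with $z$ the core. Take $\al$ a radial arc meeting the core once, so that $\cD_\al$ is a meridian disk and $\Phi_\al$ the corresponding meridian loop $m$; take $L=z^k$, i.e.\ $k$ parallel cores. Then $\Phi_\al$ is the classical meridian (encircling) operator $\mu(f)=m\cup f$ on $R[z]$. Because $z^j$ meets $\cD_\al$ in exactly $j$ points, the filtration $\cF^\al_\bullet$ is the degree filtration of $R[z]$; since the Chebyshev polynomial $S_k$ is monic of degree $k$, one has $z^k\equiv S_k(z)\pmod{\cF^\al_{k-1}}$. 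As $\Phi_\al$ preserves the filtration, it follows that $c_k$ is exactly the (leading) eigenvalue of $\mu$ on $S_k(z)$, so it remains to show $\mu(S_k)\equiv\lambda_k\,S_k\pmod{\cF^\al_{k-1}}$.

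Finally I would pin down this eigenvalue. The base cases are immediate: $c_0=\lambda_0$ is the value of a contractible unknot bounding a disk disjoint from $L$, and $c_1=\lambda_1=-(t^4+t^{-4})$ is the value of a loop encircling one strand, obtained by resolving its two crossings via \eqref{eq.skein1}. For general $k$, the eigenvalues of the meridian operator on the Chebyshev basis $\{S_k\}$ are $\lambda_k=-(t^{2k+2}+t^{-2k-2})=-T_{k+1}(t^2+t^{-2})$, and these satisfy the three-term recursion $(t^2+t^{-2})\,\lambda_k=\lambda_{k+1}+\lambda_{k-1}$, which parallels $z\,S_k=S_{k+1}+S_{k-1}$. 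I would establish $c_k=\lambda_k$ by setting up this recursion directly from the skein relation, pushing the meridian across one additional parallel strand and resolving the resulting crossings (equivalently, by quoting the standard Jones--Wenzl encircling eigenvalue).

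The main obstacle is precisely this local eigenvalue computation: carrying out the resolution of the $2k$ crossings of $\Phi_\al$ with the strands and proving, on one hand, that every state capping off or reconnecting distinct strands lowers the geometric intersection number with $\cD_\al$ and so lies in $\cF^\al_{k-1}$, and on the other hand that the weights of the surviving ``straight through'' states assemble into $\lambda_k$ (equivalently, verifying the three-term recursion). Everything else --- the locality reduction and the passage to the solid-torus model --- is routine once this universal constant is identified.
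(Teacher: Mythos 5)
Your proposal is correct, and its skeleton is the same as the paper's: your locality argument in the first paragraph is precisely the paper's reduction of Proposition \ref{eigenvalue} to Proposition \ref{prop.TL} (your universal constant $c_k$ is the coefficient of the identity $\tilde e_k$ in the expansion of $\Phi_\al(\tilde e_k)$ in $TL_k$; all other Temperley--Lieb diagrams have at most $k-2$ through-strands and so land in $\cF^\al_{k-1}$), and your identification $c_k=\lambda_k$ rests on the same external input the paper quotes, namely the standard encircling eigenvalue of the Jones--Wenzl/Chebyshev element. The packaging differs in one mildly interesting way: the paper evaluates the constant inside $TL_k$ after extending scalars to $\BC(t)$ so as to replace $\tilde e_k$ by the Jones--Wenzl idempotent $f_k$ modulo $\cF^\al_{k-1}$, whereas you evaluate it in the solid torus $\An\times[-1,1]$, where monicity of $S_k$ gives $z^k\equiv S_k(z)\pmod{\cF^\al_{k-1}}$ and the eigenvalue identity $\Phi_\al(S_k(z))=\lambda_k\,S_k(z)$ is an exact identity already over $R=\BC[t^{\pm 1}]$, so no passage to the fraction field is needed; that is a small but genuine simplification, at the cost of quoting the annulus version of the same classical fact. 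One caution about your proposed self-contained alternative: the three-term recursion does not fall out of ``pushing the meridian across one additional strand'' as directly as you suggest. Resolving the two crossings with that strand gives four terms; the two same-sign smoothings do detach the circle (yielding a meridian around $k$ strands, respectively a trivial loop contributing $\lambda_0$), but the two mixed smoothings absorb the circle into the strand, producing a strand rerouted around the remaining $k$ strands --- a diagram that still meets $\cD_\al$ in $k+1$ points, hence is \emph{not} in $\cF^\al_{k}$ and must itself be resolved further before any recursion for $c_k$ emerges. Carrying out that bookkeeping is exactly the ``direct proof'' that the paper declares a good exercise; as written, your recursion route is a sketch with a gap, but since you offer the quotation of the standard Jones--Wenzl encircling eigenvalue as an equivalent alternative, your proof as a whole stands.
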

 This is a consequence of Proposition \ref{prop.TL} proved in the next subsection.

\def\Sq{\mathrm{Sq}}
\subsection{The Temperley-Lieb algebra and the operator $\Phi$} The well-known Temperley-Lieb algebra $TL_k$ is the skein module of the disk with $2k$ marked points on the boundary. We will present the disk as the square
$\Sq=[0,1] \times [0,1]$ on the standard plane, with $k$ marked points on the top side and $k$-marked points on the bottom side. The product $L_1 L_2$ in $TL_k$ is defined as the result of placing $T_1$ on top of $T_2$.
The unit $\tilde e_k$ of $TL_k$ is presented by $k$ vertical straight arcs, see Figure \ref{fig.TL}.
\begin{figure}[htpb]
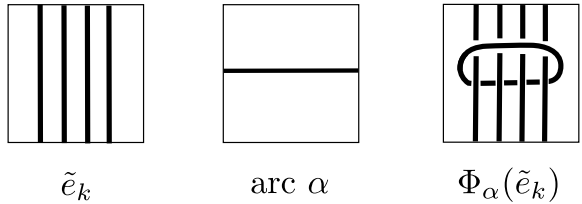

$$ \psdraw{TL_k}{3in} $$
\caption{The unit $\tilde e_k$, the arc $\al $, and $\Phi_\al (\tilde e_k)$. Here $k=4$.}
\label{fig.TL}
\end{figure}

Let $\al \subset \Sq$ be the the horizontal arc $[0,1] \times (1/2)$. The element $\Phi_\al(\tilde e_k)$ is depicted in Figure \ref{fig.TL}. In general, $\Phi_\al(L)$ is $L$ encircled by one simple closed curve.

\begin{proposition}\label{prop.TL}  With the above notation,
one has
\be \Phi_\al(\tilde e_k) = \lambda_k \, \tilde e_k \pmod  {\cF^\al _{k-1}}.
\label{eq.Phi}
\ee
\end{proposition}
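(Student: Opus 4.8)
The plan is to reduce the identity \eqref{eq.Phi} to the computation of a single scalar, and then to evaluate that scalar by an inductive skein resolution. First I would pin down the module in which the congruence lives. By Proposition \ref{prop.basis}, $TL_k$ is free on the crossingless matchings of the $2k$ boundary points of $\Sq$ that have no trivial component. Each such diagram meets $\cD_\al = \al \times [-1,1]$ in a number of points equal to its number of through-strands (arcs joining the top edge to the bottom edge), which is at most $k$; hence $\cF^\al_k = TL_k$. The unique crossingless matching with exactly $k$ through-strands is $\tilde e_k$, so $\cF^\al_k/\cF^\al_{k-1}$ is free of rank one, generated by the class of $\tilde e_k$. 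Since $\Phi_\al = \partial \cD_\al$ can be pushed off $\cD_\al$, adjoining it does not raise the intersection count, so $\Phi_\al(\tilde e_k) \in \cF^\al_k$. Therefore $\Phi_\al(\tilde e_k) \equiv c_k\, \tilde e_k \pmod{\cF^\al_{k-1}}$ for a unique $c_k \in R$, and the content of the proposition is the assertion $c_k = \lambda_k$.

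To compute $c_k$ I would induct on $k$, resolving the two crossings where $\Phi_\al$ meets the rightmost strand: along the top edge of $\cD_\al$ the curve passes over that strand, and along the bottom edge it passes under it. Applying \eqref{eq.skein1} at both crossings produces four terms with coefficients in $\{t^{2}, t^{-2}, 1\}$. One term re-closes $\Phi_\al$ into the encircling curve of the remaining $k-1$ strands, contributing $c_{k-1}$ by the inductive hypothesis; the other three produce an arc running over the top and under the bottom of the remaining bundle, together with at most one trivial circle contributing the loop value $\lambda_0 = -(t^2+t^{-2})$. The base cases are $c_0 = \lambda_0$ (a loop around nothing) and $c_1 = \lambda_1$ (a meridian around a single strand, i.e.\ the Hopf-link value $-(t^4+t^{-4})$); one checks directly that the four-term resolution for $k=1$ sums to $2 + \lambda_0(t^2+t^{-2}) = \lambda_1$. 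I would then verify that, after reducing modulo $\cF^\al_{k-1}$, the coefficients assemble into the three-term recursion $c_k = (t^2+t^{-2})\,c_{k-1} - c_{k-2}$, which is exactly the recursion satisfied by $\lambda_k = -(t^{2k+2}+t^{-2k-2})$; as the base cases agree, $c_k = \lambda_k$ follows.

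The main obstacle is the bookkeeping for $k \ge 2$. In three of the four terms the new arc weaves over the top and under the bottom of the remaining $(k-1)$-strand bundle, and it is \emph{not} immediately a straight through-strand: pulling it off the bundle forces crossings with each of the $k-1$ strands, and these must themselves be resolved. Controlling these secondary resolutions — showing that every diagram they create either drops into $\cF^\al_{k-1}$ or feeds back into the $c_{k-1}$ and $c_{k-2}$ terms with the correct coefficient — is the crux, and it is here that the precise Kauffman sign conventions of \eqref{eq.skein1} must be tracked. Conceptually this step records the fact that, modulo lower filtration, $\tilde e_k$ agrees with the Jones--Wenzl idempotent and that a meridian encircles it with eigenvalue $\lambda_k$; the inductive resolution above is the elementary, self-contained way to obtain this without introducing the idempotents.
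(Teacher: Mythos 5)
Your reduction of the statement to a single scalar is correct and worth keeping: by Proposition \ref{prop.basis} the crossingless matchings form a basis, the intersection number with $\cD_\al$ in minimal position is the number of through-strands, the top quotient $\cF^\al_k/\cF^\al_{k-1}$ is free of rank one on $\tilde e_k$, and $\Phi_\al$ preserves the filtration; hence $\Phi_\al(\tilde e_k)\equiv c_k\,\tilde e_k \pmod{\cF^\al_{k-1}}$ for some $c_k\in R$, and your base cases $c_0=\lambda_0$ and $c_1=\lambda_1$ check out. The gap is in the inductive step, exactly at the point you yourself flag as ``the crux'' and then do not carry out --- and, worse, the recursion you propose to verify is not the one this resolution produces. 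Resolving the two crossings at the rightmost strand gives (with one fixed convention in \eqref{eq.skein1}) one term $t^{2}\,\bigl[\Phi_\al(\tilde e_{k-1})\sqcup \tilde e_1\bigr]$, contributing $t^2 c_{k-1}$, and three terms that are all the \emph{same} diagram $X_k$ --- the arc from the $k$-th top point that runs over the other $k-1$ strands, turns around on the left, and returns under them --- two with coefficient $1$ and one with coefficient $t^{-2}$ together with a trivial circle, for a total coefficient $1+1+t^{-2}\lambda_0=1-t^{-4}$. Thus the honest relation is $c_k=t^2c_{k-1}+(1-t^{-4})\,x_k$, where $x_k$ is the coefficient of $\tilde e_k$ in $X_k$ modulo $\cF^\al_{k-1}$: a first-order recursion with an inhomogeneous term. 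No diagram with a loop encircling $k-2$ strands ever appears, so nothing ``feeds back into the $c_{k-2}$ term,'' and the claimed three-term recursion $c_k=(t^2+t^{-2})c_{k-1}-c_{k-2}$ is unsupported as a description of the resolution (it happens to be satisfied by $\lambda_k$, which is why your endpoint is numerically consistent, but checking $(1-t^{-4})x_k=t^{-2}c_{k-1}-c_{k-2}$ by substituting $c_j=\lambda_j$ would be circular).

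The missing lemma is $x_k=t^{2-2k}$, and it needs its own induction, which does go through by the same two-crossing trick applied to $X_k$ at the strand adjacent to the turn-back: three of the four terms contain a cap joining two top boundary points and a cup joining two bottom ones, hence have at most $k-2$ through-strands and drop into $\cF^\al_{k-2}$, while the fourth, with coefficient $t^{-2}$, is $\tilde e_1\sqcup X_{k-1}$; so $x_k=t^{-2}x_{k-1}$ with $x_1=1$. Then $c_k=t^2\lambda_{k-1}+(1-t^{-4})t^{2-2k}=\lambda_k$, as required. With this supplement your argument becomes a complete version of the paper's \emph{first} proof, which the paper only sketches (``resolve all the crossings \dots a good exercise''); the paper's actual self-contained proof is different: it extends scalars to $\BC(t)$ and quotes that the Jones--Wenzl idempotent $f_k\equiv\tilde e_k\pmod{\cF^\al_{k-1}}$ is a $\lambda_k$-eigenvector of the encircling curve --- the fact you mention at the end but rightly avoid relying on. As submitted, though, the proposal is a plan with its decisive computation absent and its asserted mechanism incorrect.
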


\begin{proof} A direct proof can be carried out as follows. Using the skein relation \eqref{eq.skein1} one resolves all the crossings of the diagram of $\Phi_\al(\tilde e_k)$, and finds that only
a few terms are not in ${\cF^\al_{k-1}}$, and the sum of these terms is equal to $\lambda_k \, \tilde e_k$. This is a good exercise for the dedicated reader.

Here is another proof using more advanced knowledge of the Temperley-Lieb algebra. First we extend the ground ring to field of fraction $\BC(t)$. Then the Temperley-Lieb algebra contains
a special element called the Jones-Wenzl idempotent $f_k$ (see e.g. \cite[Chapter 13]{Lickorish}). We have $f_k = \tilde e_k \pmod  {\cF^\al _{k-1}}$, and $f_k$ is an
eigenvector of $\Phi_\al $ with eigenvalue $\lambda_k$. Hence, we have \eqref{eq.Phi}.
\end{proof}

\def\rk{\mathrm{rk}}

\no{
\begin{remark}
Note that  the scalars $\lambda_i$ are pairwise distinct. For an $R$-submodule $V \subset \CS(\Sigma)$ let $\cF^\al_m(V) = V \cap \cF^\al_m$.
It then follows easily from the proposition  that if $V$ is a finite rank $\varphi$-invariant $R$-submodule of $\CS(\Sigma)$, the action of $\varphi_\al$ on $\cF^\al_k(V)$ is diagonalizable (over the ring $R$); its eigenvalues are $\lambda_i, i=0,1,\dots,k$, with multiplicity
$\rk \cF^\al_k/\cF^\al_{k-1}$.
\end{remark}

\subsection{Filtration by a surface} Suppose $\sigma \subset X$ is a surface properly embedded in $X$. In particular, $\partial \sigma \subset \partial X$.
Let $\cF^\sigma_k\, \CS(X) (X)$ be the $R$-submodule of $\CS(X)$ spanned by all links which intersect with $\sigma$ at less than or equal to $k$ points. The filetration was used
in \cite{Marche} to calculate the skein module of torus knot complements.

Suppose $\varphi=\partial \sigma$. Then $\varphi: \CS(X) \to \CS(X)$, given by $ \varphi(\al)= \al \sqcup \varphi$ is an $R$-linear operator. It is clear that  $\varphi$ preserves the filtrations $\cF^\sigma_k(\CS(X))$.

In this paper we will focus mostly in the case when $\sigma$ is a disk.
}

\def\Aoo{\All}

\section{Another annulus with two marked points}\label{sec.4}

\subsection{Annulus with two marked points on the same boundary}
Let $\All$ be the annulus $A$ with two marked  points $Q_1,Q_2$ on the outer boundary as in Figure \ref{All}. Let $u_0, u_1$ be arcs connecting $Q_1$ and $Q_2$ in $\All$ as in Figure \ref{All}.
\begin{figure}[htpb]
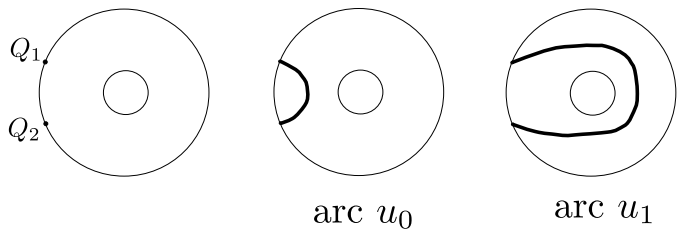

$$ \psdraw{Aoo}{3.5in} $$
\caption{The marked annulus $\Aoo$ and arcs $u_0, u_1$}
\label{All}
\end{figure}

Define a left $\CS(\An)$-module and a right $\CS(\An)$-module on $\CS(\Aoo)$ as follows.
For $K\in \CS(\All)$ and $L\in \CS(\An)$ let $KL$ be the skein in $\CS(\Aoo)$ obtained by placing $K$ on top of $L$, and $LK \in \CS(\Aoo)$ obtained by placing $L$ on
top of $K$. It is easy to see that $KL= LK$.
 Recall that $\CS(\BA) = R[z]$.
 \begin{proposition}
 The module $\CS(\All)$ is a free $\CS(\BA)$-module with basis  $\{ u_0,u_1\}$:
 $$ \CS(\All) = R[z]\, u_0  \oplus  R[z] \, u_1.$$
 \end{proposition}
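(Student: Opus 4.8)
The plan is to apply the basis theorem for relative skein modules (Proposition~\ref{prop.basis}) to the marked surface $\All$ and then match the resulting $R$-basis with the claimed $\CS(\An)$-basis $\{u_0, u_1\}$. First I would describe the isotopy classes of relative links in $\All$ without trivial components. Each such link consists of some closed loops together with exactly one arc connecting $Q_1$ to $Q_2$ (there are exactly two marked points, so any relative link has exactly one arc-component joining them). Up to isotopy, a closed essential loop in the annulus is parallel to the core, so the closed part of the link is some number of parallel copies of the core $z$; and the single arc is, up to isotopy rel endpoints, determined by how many times it winds around the hole. The key combinatorial observation is that an arc connecting the two boundary points $Q_1, Q_2$ (both on the outer boundary) is isotopic, after absorbing winding into parallel core circles, to either $u_0$ or $u_1$, where $u_0$ and $u_1$ represent the two ``parities'' of how the arc passes the inner hole.

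Next I would make precise the statement that $u_0$ and $u_1$ generate everything over $R[z]$. The action of $z$ adds a parallel core circle, so $z^m u_0$ is the class with $m$ core circles together with the arc $u_0$, and similarly for $u_1$. I would verify that every basis element from Proposition~\ref{prop.basis}, i.e. every reduced relative link, is exactly one of the elements $z^m u_0$ or $z^m u_1$ for some $m \ge 0$: the arc winding around the hole an even number of times is isotopic to $u_0$ plus extra core loops only if those loops are genuinely separate components, but more carefully one shows the winding arc itself equals (in $\CS$) a combination --- here I must be cautious, because winding the arc is \emph{not} the same as adding disjoint core circles. The cleanest route is: classify reduced relative links by the isotopy type of the arc and the number of disjoint core circles, observe there are exactly two isotopy classes of embedded arcs that cannot be reduced by removing circles (namely $u_0$ and $u_1$), and conclude the reduced relative links are precisely $\{z^m u_i : m \ge 0,\ i \in \{0,1\}\}$ as sets, giving a bijection with the monomial $R$-basis of $R[z]u_0 \oplus R[z]u_1$.

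With the bijection established, freeness is immediate: since Proposition~\ref{prop.basis} tells us these reduced relative links form a free $R$-basis of $\CS(\All)$, and since $\{z^m\}_{m\ge 0}$ is an $R$-basis of $\CS(\An)=R[z]$, the identification $z^m u_i \leftrightarrow z^m \cdot u_i$ shows $\{u_0, u_1\}$ is a free basis over $R[z]$. I would also note that the commutativity $KL = LK$ claimed just before the proposition makes the left/right module structures agree, so there is no ambiguity in writing $R[z]u_0 \oplus R[z]u_1$.

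The main obstacle I anticipate is the arc-classification step: verifying that \emph{every} embedded arc from $Q_1$ to $Q_2$ in the annulus is isotopic, rel endpoints and up to disjoint core circles, to exactly one of $u_0, u_1$, and in particular that higher windings of the arc genuinely produce \emph{additional core-parallel circles} rather than new inequivalent arcs. This is a planar/annular isotopy argument: an arc winding $n$ times around the hole can be isotoped so that it splits off $\lfloor n/2 \rfloor$ (or so) parallel core circles while the residual arc has winding parity $0$ or $1$, matching $u_0$ or $u_1$. I would carry this out by an explicit picture/isotopy in the annulus, using that $\pi_1$ of the annulus is $\BZ$ so that the only isotopy invariant of the arc is an integer, whose reduction mod the circle-absorbing move leaves exactly two classes. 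Once this geometric classification is pinned down, the algebraic conclusion is formal.
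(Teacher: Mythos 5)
Your overall strategy is exactly the paper's: the paper's entire proof is the observation that every relative link in $\All$ without trivial components has the form $u_i\, z^m$ with $i=0,1$ and $m\ge 0$, followed by an appeal to Proposition \ref{prop.basis}, and your bijection between the reduced relative links and the monomial basis of $R[z]u_0\oplus R[z]u_1$ is the same formalization. The gap sits precisely in the step you yourself flag as the main obstacle, and the mechanism you propose there would fail. A relative link is an \emph{embedded} $1$-manifold, and an isotopy preserves the number of components, so an arc can never be ``isotoped so that it splits off $\lfloor n/2\rfloor$ parallel core circles'': a putative arc winding $n\ge 2$ times around the hole is simply not embedded (its angular sweep exceeds $360^\circ$, forcing a self-intersection by the intermediate value theorem), hence such arcs never occur among the basis elements of Proposition \ref{prop.basis} at all; a diagram with crossings is reduced by the skein relations, not by isotopy. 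Likewise, ``the only isotopy invariant of the arc is an integer, whose reduction mod the circle-absorbing move leaves exactly two classes'' conflates homotopy with isotopy: homotopy classes rel endpoints of paths from $Q_1$ to $Q_2$ do form a $\BZ$-torsor over $\pi_1(\All)$, but there is no circle-absorbing move; the correct statement is that only two of those homotopy classes contain embedded representatives.

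The missing justification is short and replaces your last paragraph. A properly embedded arc $a$ in $\All$ with $\partial a=\{Q_1,Q_2\}$, both on the outer boundary, cuts $\All$ into pieces whose Euler characteristics sum to $1$; a connected result would be a surface with $\chi=1$ and two boundary circles, which is impossible, so $a$ separates $\All$ into a disk and an annulus, the annulus piece containing the inner boundary circle. The isotopy class of $a$ rel the marked points is then determined by which of the two outer-boundary arcs between $Q_1$ and $Q_2$ lies in the disk piece, giving exactly the two classes $u_0$ and $u_1$. With this in place, a reduced relative link is one such arc together with disjoint essential closed curves, each parallel to the core, i.e.\ $u_i z^m$ with $m\ge 0$, and your concluding freeness argument via Proposition \ref{prop.basis}, as well as your remark that the commutativity $KL=LK$ reconciles the left and right $\CS(\An)$-module structures, matches the paper.
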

 \begin{proof}
Any relative link in  $\Aoo$ is of the form $u_i \, z^m$ with $i=0,1$ and $m \in \BZ$. The proposition now follows from Proposition \ref{prop.basis}.
 \end{proof}

 \subsection{Framing change and the unknot} Recall that $S_k$ is the $k$-th Chebyshev polynomial of type 2.
 The values of the unknot colored by $S_k$ and the framing change are well-known (see e.g. \cite{BHMV}): In $\CS(M)$, where $M$ is an oriented $3$-manifold, one has
 \begin{align}
 L \sqcup S_k(U) = (-1)^k \frac{t^{2k+2}-t^{-2k-2} }{t^2-t^{-2}}\, L  \label{eq.U}\\
 S_k \left(  \psdraw{curl}{.25in} \right ) = (-1)^k t^{k^2+ 2k} S_k  \left(\psdraw{straight}{.25in} \right).
 \label{eq.framing}
 \end{align}
 Here in \eqref{eq.U},  $U$ is the trivial knot lying in a ball disjoint from $L$.
\subsection{Some elements of $\CS(\All)$}
Let $u_k,  k\ge 0$ are arcs in $\All$ depicted in Figure \ref{fig.un}. The element $u_1$ and $u_0$  are the same as the ones defined in Figure \ref{All}. Let $v_0=u_0$ and $v_k, k\ge 1$
are arcs in $\All$ depicted in Figure \ref{fig.un}.
\begin{figure}[htpb]
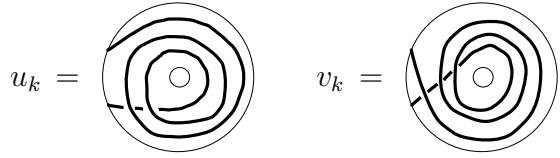

$$ u_k\, = \ \psdraw{un}{.8in}  \qquad v_k \, = \  \psdraw{vn}{.8in} $$
\caption{$u_k$ and $v_k$, with $k=3$}
\label{fig.un}
\end{figure}
\begin{proposition}\label{prop.un}
One has
\begin{align}
u_k & = t^{k-1} \,   S_{k-1}(z)\, u_1\,  + t^{k-3} \,  S_{k-2}(z)\, u_0\,  \quad \forall k \ge 1\label{eq:un} \\
v_k &= t^{2-k} \,  S_{k-1}(z) \,  u_1\, + t^{-k} \, S_{k}(z)\,  u_0\,  \quad \forall k \ge 0\label{eq:vn}.
\end{align}
\end{proposition}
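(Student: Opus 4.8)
The plan is to prove both formulas \eqref{eq:un} and \eqref{eq:vn} by induction on $k$, after reducing each to a three-term skein recursion for the arcs that mirrors the defining recursion of the type-2 Chebyshev polynomials. The engine of the argument is the identity $z\,S_n = S_{n+1} + S_{n-1}$, which follows at once from $S_{n+1} = zS_n - S_{n-1}$, together with the fact (Proposition \ref{prop.basis}) that $\{u_0, u_1\}$ is an $R[z]$-basis of $\CS(\All)$, so that an equality of skeins may be checked coefficientwise.

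First I would record the base cases by resolving crossings directly. For the $u$-family one has $u_1 = u_1$ and, resolving the single crossing in the diagram of $u_2$ via \eqref{eq.skein1}, $u_2 = t\,z\,u_1 + t^{-1} u_0$; these match \eqref{eq:un} at $k=1,2$ using $S_{-1}=0$, $S_0=1$, $S_1=z$. For the $v$-family one has $v_0=u_0$ by definition and, in the same way, $v_1 = t\,u_1 + t^{-1} z\,u_0$, matching \eqref{eq:vn} at $k=0,1$.

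The heart of the proof is the geometric recursion. Placing the core $z$ on top of the arc $u_k$ (as in Figure \ref{fig.un}) produces two crossings of the core circle with the winding strand; resolving both by \eqref{eq.skein1}, the two coherent smoothings yield $u_{k+1}$ and $u_{k-1}$ (adding, respectively removing, one turn), while the mixed smoothings recombine, the resulting trivial circle being absorbed by \eqref{eq.skein2}, and where the extra turn introduces a curl the framing relation \eqref{eq.framing} with $k=1$ (the factor $-t^3$) restores the vertical framing. The bookkeeping should give
\[
 z\,u_k = t^{-1} u_{k+1} + t\, u_{k-1}\quad(k\ge 2), \qquad z\,v_k = t\, v_{k+1} + t^{-1} v_{k-1}\quad(k \ge 1),
\]
the two $t$-patterns differing because the windings defining $u_k$ and $v_k$ have opposite handedness.

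Finally I would close the induction. Assuming \eqref{eq:un} for all indices $\le k$, substitute into the rearranged recursion $u_{k+1} = t\,z\,u_k - t^2 u_{k-1}$ and use $z S_n = S_{n+1}+S_{n-1}$ to collect the coefficients of $u_1$ and $u_0$ into $t^{k} S_{k}(z)$ and $t^{k-2} S_{k-1}(z)$, which is exactly \eqref{eq:un} at $k+1$; the computation for $v$ is identical with $t$ replaced by $t^{-1}$ throughout. The main obstacle is the middle step: reading the precise smoothing pattern off the diagram in Figure \ref{fig.un} and pinning down the exact powers of $t$, in particular separating the genuine winding contribution from the framing correction supplied by \eqref{eq.framing}, since an error of a single power of $t$ propagates through the whole induction.
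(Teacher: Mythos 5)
Your proposal is correct in substance and follows the paper's strategy: reduce to a three-term skein recursion mirroring $S_{n+1}=zS_n-S_{n-1}$, check base cases, and induct. The one genuine divergence is where the recursion comes from. The paper resolves the single \emph{innermost self-crossing} of the diagram $u_k$ itself (for $k\ge3$): one smoothing splits off a core-parallel circle, giving $t\,u_{k-1}z$, and the other produces $u_{k-2}$ with a positive curl, whose removal via \eqref{eq.framing} contributes $t^{-1}\cdot(-t^3)=-t^2$; this yields $u_k=t\,u_{k-1}z-t^2u_{k-2}$ (and $v_k=t^{-1}v_{k-1}z-t^{-2}v_{k-2}$ for $k\ge2$), which is exactly your recursion $z\,u_k=t^{-1}u_{k+1}+t\,u_{k-1}$ after rearranging and shifting the index, with matching ranges. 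Your route instead multiplies by $z$ and resolves the \emph{two} circle--arc crossings, which forces a four-state analysis, and your description of it is the weak point: the two mixed smoothings cannot simply contribute a trivial circle ``absorbed by \eqref{eq.skein2}'', because that would inject a factor $\lambda_0=-(t^2+t^{-2})$, a binomial, whereas the target $t^{-1}u_{k+1}+t\,u_{k-1}$ has monomial coefficients in which every curl factor $-t^{\pm3}$ appears multiplicatively; so the mixed states must in fact cancel against each other (or recombine with curl corrections) rather than be absorbed, and verifying this is strictly more bookkeeping than the paper's two-term resolution. You flag this yourself as the main obstacle, and rightly so --- it is the entire content of the proof. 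Everything else checks out: the base cases $u_1$, $u_2=t\,z\,u_1+t^{-1}u_0$, $v_0=u_0$, $v_1=t\,u_1+t^{-1}z\,u_0$ agree with \eqref{eq:un}--\eqref{eq:vn}; your restriction of the recursions to $k\ge2$ (for $u$) and $k\ge1$ (for $v$) correctly sidesteps the framing anomaly at $u_0$ that the paper notes in its remark (the recursion genuinely fails one step lower, since $z\,u_1=t^{-1}u_2-t^{-2}u_0$); and the inductive algebra with $zS_n=S_{n+1}+S_{n-1}$, checked coefficientwise in the free $R[z]$-basis $\{u_0,u_1\}$ of $\CS(\All)$, closes correctly. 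If you rewrite the middle step to resolve the innermost self-crossing of $u_k$ as the paper does, the proof becomes complete with a single curl correction instead of four states.
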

\begin{proof}
Suppose $k \ge 3$. Apply the skein relation to the innermost crossing  of $u_k$, we get
$$ u_k\,  = \ \psdraw{un_p1}{.6 in} \ = \ t \ \psdraw{un_p2}{.6 in}\  +\  t^{-1}\ \psdraw{un_p3}{.6 in}\,$$
which, after an isotopy and removing a framing crossing, is
$$ u_k = t \,  u_{k-1} \, z  - t^2\, u_{k-2},$$
from which one can easily prove \eqref{eq:un} by induction.

Similarly, using the skein relation to resolve the innermost crossing point of $v_k$, we get
$$ v_k =t^{-1}  v_{k-1}\, z    - t^{-2}\, v_{k-2}, \quad \text{  for $k \ge 2$,}$$
from which one can prove \eqref{eq:vn} by induction.
\end{proof}
\begin{remark}
Identity \eqref{eq:un} does not hold for $k=0$. This is due to a framing change.
\end{remark}
\subsection{Operator $\Psi$} Let $\Psi$ be the arc in $\partial \BA \times [-1,1]$ beginning at $Q_1$ and ending at $Q_2$, as depicted in Figure \ref{fig.Psi}.
\begin{figure}[htpb]
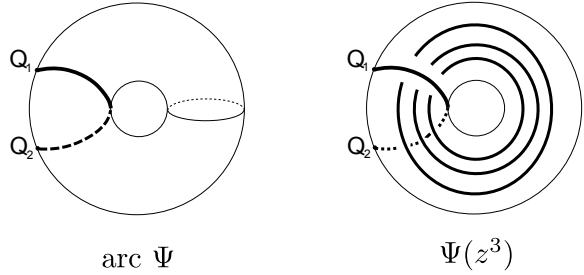

$$ \psdraw{Psi}{3in} $$
\caption{Arc $\Psi$ connecting $Q_1$ and $Q_2$, and $\Psi(z^3)$}
\label{fig.Psi}
\end{figure}
Here we draw
$\BA \times [-1,1]$ as a handlebody. For any element $\al \in \CS(\BA)$ let $\Psi(\al)\in \CS(\All)$ be the skein $\Psi \cup \al$. For example, $\Psi(z^3)$ is given in
Figure \ref{fig.Psi}.
\begin{proposition}
For $k \ge 1$, one has
\be
\Psi(T_k(z))=  u_1 \left[ t^2(t^{-2k} - t^{2k})\,  \, S_{k-1}(z)\right]  + u_0 \left[  t^{-2k} S_k(z) - t^{2k} S_{k-2}(z)\right].
\label{eq.104}
\ee
\label{prop.Psi}
\end{proposition}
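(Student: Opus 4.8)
The plan is to compute $\Psi(T_k(z))$ by first rewriting it, through a single geometric identity, as an explicit $R$-linear combination of the two arcs $u_k$ and $v_k$ of Proposition \ref{prop.un}, and then substituting the closed formulas \eqref{eq:un} and \eqref{eq:vn}. Concretely, I expect to prove
\begin{equation*}
\Psi(T_k(z)) \;=\; -\,t^{k+3}\, u_k \;+\; t^{-k}\, v_k \qquad \text{in } \CS(\All). \tag{$\ast$}
\end{equation*}
Granting $(\ast)$, the proposition is immediate: expanding $u_k$ and $v_k$ by \eqref{eq:un}--\eqref{eq:vn}, the coefficient of $u_1$ becomes $(-t^{2k+2}+t^{2-2k})S_{k-1}(z)=t^2(t^{-2k}-t^{2k})S_{k-1}(z)$, while the coefficient of $u_0$ becomes $t^{-2k}S_k(z)-t^{2k}S_{k-2}(z)$, which is exactly \eqref{eq.104}. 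This last step is a routine verification with the Chebyshev polynomials of type $2$ and needs no further comment.

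The heart of the matter is therefore $(\ast)$, a purely skein-theoretic statement in the solid torus $\An\times[-1,1]$: here $\Psi$ is a single arc running around the handle along $\partial\An\times[-1,1]$, while $T_k(z)$ is the core cabled by the $k$-th Chebyshev polynomial of type $1$. The idea is to resolve the crossings of $\Psi$ with the $T_k$-coloured core using the skein relation \eqref{eq.skein1}. Via $T_k(u+u^{-1})=u^k+u^{-k}$ from \eqref{eTn}, the $T_k$-colouring splits the core's contribution into a ``$+k$'' part and a ``$-k$'' part; after pushing $\Psi$ across, these are carried precisely to the arc $u_k$ winding $k$ times in the positive direction and to the arc $v_k$ winding $k$ times in the negative direction, and the scalars $-t^{k+3}$ and $t^{-k}$ accumulate from the per-crossing factors of \eqref{eq.skein1} together with the single-strand framing corrections \eqref{eq.framing} (and the unknot value \eqref{eq.U} wherever a closed loop is created). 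I would first check the base cases $k=1,2$ directly; for instance resolving $\Psi\cup z$ yields $(1-t^4)\,u_1+t^{-2}z\,u_0$, which is $(\ast)$ for $k=1$ since $t^{-1}v_1=u_1+t^{-2}z\,u_0$. I would then run an induction in $k$ based on the recursion $T_k(z)=zT_{k-1}(z)-T_{k-2}(z)$.

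The main obstacle I anticipate is precisely this inductive step, because $\Psi$ does \emph{not} commute cleanly with multiplication by the core: one does not simply have $\Psi(z\,\beta)=z\cdot\Psi(\beta)$, since sliding the extra core past the arc $\Psi$ creates crossings and hence framing factors. Consequently the recursion satisfied by $\Psi(T_k(z))$ is not the bare Chebyshev recursion but a framing-twisted version of it, and the real work is to track these signs and $t$-powers carefully enough that they telescope to exactly $-t^{k+3}u_k+t^{-k}v_k$, with no spurious $t^{\pm2}$ discrepancy. An alternative that avoids the interaction with the module multiplication is to compute $\Psi(z^j)$ for each $j$ by resolving all of the crossings of $\Psi$ with the $j$ parallel cores, express each answer in the basis $\{u_0,u_1\}$ over $R[z]$, and then assemble $\Psi(T_k(z))$ by $R$-linearity from $T_k(z)=\sum_j c_j z^j$; this trades the framing bookkeeping for a more combinatorial resolution count, and either route should converge on $(\ast)$.
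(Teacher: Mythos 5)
Your skeleton coincides with the paper's: the paper also proves exactly your identity $(\ast)$, namely $\Psi(T_k(z))=-t^{k+3}\,u_k+t^{-k}\,v_k$, and then substitutes \eqref{eq:un}--\eqref{eq:vn}, so your endgame and your coefficient check (including the base case $k=1$, which is right) match the published argument. The difference is how $(\ast)$ is established, and here the paper is much lighter than either of your proposed routes: it applies Proposition \ref{prop.transp1} \emph{locally}. A neighborhood of the region where the arc $\Psi$ runs past the $T_k$-cabled core is a copy of the marked annulus $\Aio$, with the cable in the role of the $z$-cables and the strand of $\Psi$ in the role of $e$; since $L\mapsto L\bullet e$ is an algebra homomorphism, identity \eqref{e33a} gives $T_k(z)\bullet e=t^k\uu^k+t^{-k}\uu^{-k}$, which splits the picture in one stroke into $t^k$ times a diagram isotopic to $u_k$ carrying a single positive framing kink (factor $-t^3$) and $t^{-k}$ times $v_k$. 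This is precisely your ``$+k$ part / $-k$ part'' heuristic made rigorous, with one global framing correction instead of per-crossing bookkeeping, and with no induction on $k$ at the level of $\Psi$ (the induction is already absorbed into the proof of Proposition \ref{prop.transp1}).

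By contrast, the induction on $T_k(z)=zT_{k-1}(z)-T_{k-2}(z)$ that you designate as the heart of the matter is exactly where your plan is incomplete: as you yourself observe, $\Psi$ does not commute with multiplication by the core, so you would first have to derive and then solve a framing-twisted recursion for $\Psi(T_k(z))$, and none of that is carried out; your monomial-by-monomial fallback is also only sketched and discards the Chebyshev structure that makes the answer closed-form. Neither route seems doomed---your verified $k=1$ case and the exact match of $(\ast)$ with \eqref{eq.104} suggest the bookkeeping would close---but as written the decisive step is deferred, and the tool that eliminates it, the local application of Proposition \ref{prop.transp1}, is already available in the paper and is what you should use.
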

\begin{proof}
 Applying Proposition \ref{prop.transp1} to the part in the left rectangle  box, 
 we get
 $$ 
  \psdraw{Psi_p1}{1in} = \ t^k \, \psdraw{Psi_p2}{1in} \ +\  t^{-k} \ \psdraw{Psi_p3}{1in} \ .$$
 The positive  framing crossing in the first term gives a factor $-t^3$. Thus,
 \begin{align*}
 \Psi(T_k(z)) & =
-t^{k+3} \, u_k + t^{-k} \, v_k.
 \end{align*}
 Plugging in the values of $u_k, v_k$ given by Proposition \ref{prop.un}, we get the result.
\end{proof}

\begin{remark}
One can use Proposition \ref{prop.Psi} to establish  product-to-sum formulas similar to the ones in \cite{FG}.
\end{remark}

\def\cH{\mathcal H}

\section{Twice punctured disk} \label{sec.5}
\subsection{Skein module of twice punctured disk}\label{sec:Phi}
 Let $\cD\subset \BR^2$ be the disk of radius 4 centered at the origin, $\cD_1 \subset \BR^2$ the disk of radius 1 centered $(-2,0)$, and
$\cD_2$ the disk of  radius 1 centered $(2,0)$.
\begin{figure}[htpb]
$$ \psdraw{arcs}{3.5in} $$
\caption{The twice punctured disk $\BD$ and  the arcs $\al_1, \al_0, \al_2, \al_3$}
\label{arcs}
\end{figure}
We define $\BD$ to be $\cD$ with the interiors of $\cD_1$ and $\cD_2$ removed.
The horizontal axis intersects $\BD$ at 3 arcs denoted from left to right by $\al_1, \al_0, \al_2$, see Figure \ref{arcs}. The vertical axis of $\BR^2$ intersects $\BD$ at an arc denoted by $\al_3$.
 The corresponding curve $\Phi_{\al_i}$ on $\partial \BD \times [-1,1]$ will be denoted simply
by $\Phi_i$, for $i=0,1,2,3$. If $\BD \times [-1,1]$ is presented as the  handlebody $\cH$, which is a thickening of $\BD$ in  $\BR^3$, then the curves $\Phi_1,\Phi_0,\Phi_2, \Phi_3$ look like in Figure \ref{curves_alpha}.
\begin{figure}[htpb]
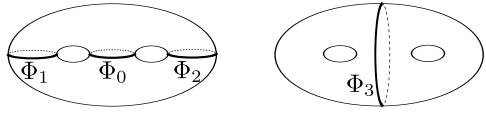

$$ \psdraw{Phi}{2.5in} $$
\caption{The curves $\Phi_1,\Phi_0,\Phi_2,\Phi_3$ on the boundary of the handlebody}
\label{curves_alpha}
\end{figure}

Let $x_1,x_2$, and $y$ be the closed curves in $\BD$:
$$  x_1= \psdraw{x1}{1in} \ , \quad x_2 =  \psdraw{x2}{1in} \ , \quad y = \psdraw{y}{1in}$$

It is known that $\CS(\BD) = R[x_1,x_2,y]$, the $R$-polynomial in the variables  $x_1, x_2, y$, see \cite{BP}. In particular, $\CS(\BD)$ is commutative.

Let $\sigma$ be the  rotation about the origin of $\BR^2$ by $180^0$. Then $\sigma(\BD)= \BD$. Hence $\sigma$ induces an automorphism of $\CS(\BD)=R[x_1, x_2, y]$, which
is an algebra automorphism. One has $\sigma(y)=y, \sigma(x_1)= x_2, \sigma(x_2)= x_1$.

\def\rdeg{\mathrm{deg}_r}
\def\ldeg{\mathrm{deg}_l}
\subsection{Degrees on $\CS(\BD)=R[x_1,x_2,y]$}
Define the left degree, right degree, and double degree on $R[x_1,y, x_2]$ as follows.
For a monomial $\mm=x_1^{a_1} y^b x_2^{a_2}$ define its left degree $\ldeg(\mm)= a_1 + b$, right degree $\rdeg(\mm)= a_2+b$, double degree $\sdeg(\mm)=\ldeg(\mm) + \rdeg(\mm)= a_1 + a_2 + 2b$.
One readily finds that
$$ \ldeg(\mm) = \fil_{\al_1}(\mm), \quad \rdeg(\mm) = \fil_{\al_2}(\mm),$$
where $\fil_\al$ is defined in Section \ref{sec.filtration}. Using the definition of $\fil_\al$ involving the numbers of intersection points we get the following.

\begin{lemma} \label{degree_bound}
Suppose $L$ is an embedded  link in $\BD$ and $L$ intersects transversally the arc $\al_i$ at $k_i$ points for $i=1,2,3$.
Then, as an element  of $\CS(\BD)$, $L= x_1^{a_1}x_2^{a_2} y^b$, where  $2b \le k_3$ and
\begin{align*}
\ldeg(L) \le k_1 , \quad \ldeg(L) & \equiv k_1 \pmod 2 \\
\rdeg(L)\le k_2 , \quad  \rdeg(L) & \equiv k_2 \pmod 2.
\end{align*}
Consequently, $\sdeg(L)\le k_1 + k_2$ and $\sdeg(L) \equiv k_1 + k_2 \pmod 2$.
\end{lemma}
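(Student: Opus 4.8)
The plan is to translate the stated inequalities and congruences into statements about the arc-filtrations $\fil_{\al_i}$ and then invoke the intersection-counting description of those filtrations established in Section \ref{sec.filtration}. Recall from that section that $\fil_{\al}(L)$ counts the minimal number of intersection points of a representative of $L$ with the disk $\cD_\al = \al \times [-1,1]$, and that this equals the number of intersections of the projected diagram with $\al$ (a double point of the diagram lying on $\al$ counted twice). Since here $L$ is an embedded link in $\BD$ meeting each $\al_i$ transversally at $k_i$ points, its diagram has no double points and meets $\al_i$ in exactly $k_i$ points; hence $L \in \cF^{\al_i}_{k_i}$, giving $\fil_{\al_i}(L) \le k_i$ directly. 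Using the identifications $\ldeg(\mm) = \fil_{\al_1}(\mm)$ and $\rdeg(\mm) = \fil_{\al_2}(\mm)$ recorded just above the lemma, this yields $\ldeg(L) \le k_1$ and $\rdeg(L) \le k_2$.

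For the bound $2b \le k_3$, I would argue similarly with the arc $\al_3$: the curve $y$ is the only generator among $x_1, x_2, y$ that is forced to cross $\al_3$, and it must do so an even number of times, contributing $2$ to the $\al_3$-intersection count for each factor of $y$. More carefully, writing $L = x_1^{a_1} x_2^{a_2} y^b$, the minimal geometric intersection of this configuration with $\al_3$ is $2b$ (each copy of $y$ separates the two punctures and so meets the vertical axis twice, while $x_1$ and $x_2$ can be isotoped off $\al_3$), so $\fil_{\al_3}(L) = 2b$; combined with $\fil_{\al_3}(L) \le k_3$ this gives $2b \le k_3$.

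The parity statements are the point requiring slightly more care, and I expect this to be the main obstacle. The inequality $\fil_{\al_1}(L) \le k_1$ alone does not force $\ldeg(L) \equiv k_1 \pmod 2$; for that I would use a mod-2 homological invariant. The key observation is that when one resolves crossings via the skein relation \eqref{eq.skein1}, each resolution changes the number of intersection points of the diagram with a fixed arc by exactly $2$ (a crossing on $\al$ is replaced by either of two non-crossing strand patterns, altering the local count by $\pm 2$ or $0$), so the parity of the intersection number with $\al_i$ is a skein invariant. Thus every basis monomial appearing in the expansion of $L$ meets $\al_i$ with the same parity as $L$ itself, namely the parity of $k_i$. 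Since $\ldeg$ equals the $\al_1$-intersection number on monomials and $\rdeg$ equals the $\al_2$-intersection number, we obtain $\ldeg(L) \equiv k_1$ and $\rdeg(L) \equiv k_2 \pmod 2$. The final assertions about $\sdeg(L) = \ldeg(L) + \rdeg(L)$ then follow by adding the two inequalities and the two congruences.
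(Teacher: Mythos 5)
Your proposal is correct, but it is organized quite differently from the paper's proof. The paper argues componentwise: it reduces to a single embedded loop, invokes the classification of simple closed curves in $\BD$ (trivial, $x_1$, $x_2$, or $y$), and verifies the inequalities and congruences in each case using the mod~2 intersection numbers $I_2(\cdot,\al_i)$ --- for instance $I_2(x_1,\al_1)=1$ forces $k_1$ to be odd and at least $1$. You instead treat the skein element globally: the inequalities come from $L\in\cF^{\al_i}_{k_i}$ combined with the identities $\ldeg=\fil_{\al_1}$, $\rdeg=\fil_{\al_2}$ stated just before the lemma, and the parities from the observation that resolving a crossing changes the intersection count with a fixed arc by $0$ or $\pm 2$, so that intersection parity survives skein expansion. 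What your route buys is generality: the parity-under-resolution principle applies to diagrams with crossings and hence yields Corollary \ref{Cor.d1} in one stroke, whereas the paper deduces that corollary from the lemma together with the double-point counting convention. What the paper's route buys is self-containedness: the case check on the four curve types is exactly where the nontrivial content lives, with no appeal to unproved filtration identities.

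Two caveats you should address. First, since $L$ is embedded there are no crossings to resolve, so the mechanism you emphasize is vacuous for this lemma; what your parity argument actually needs is that isotoping $L$ to standard monomial position (and deleting trivial loops, which meet each $\al_i$ in an even number of points because they are null-homotopic) preserves the parity of the geometric intersection number with $\al_i$. That is precisely the homological invariance you gesture at with ``a mod-2 homological invariant,'' and it is the same $I_2$ computation the paper performs; make it explicit rather than leaning on the resolution picture. Second, your derivation of $\ldeg(L)\le k_1$ uses the lower-bound direction of $\ldeg(\mm)=\fil_{\al_1}(\mm)$, i.e.\ that a monomial does not lie in lower filtration; citing this is legitimate since the paper asserts the identity beforehand, but note that this direction is itself proved by the very minimal-position counting the lemma encodes, so your argument quietly relocates the content rather than eliminating it. For a self-contained version, replace it by the direct estimate: each component of $L$ isotopic to $x_1$ or $y$ must meet $\al_1$ at least once (again by $I_2$), giving $k_1\ge a_1+b=\ldeg(L)$, and similarly for $\al_2$; likewise each $y$-parallel must meet $\al_3$ at least twice because $\al_3$ separates the two punctures, giving $2b\le k_3$ --- which is essentially the paper's argument.
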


\begin{proof} If $L= L_1 \sqcup L_2$ is the union of 2 disjoint sub-links, and the statement holds for each of $L_i$, then it holds for $L$.
Hence we assume $L$ has one component, i.e.
$L$ is an embedded loop in $\BD \subset \BR^2$. 
Then $L$ is isotopic to either a trivial loop, or $x_1$, or $x_2$, or $y$. In each case, the statement can be verified easily.
For example, suppose
 $L= x_1$. For the  mod 2 intersection numbers, $I_2(L,\al_1) =I_2(x_1,\al_1)=1$.
Hence $k_1$, the geometric intersection number between $L$ and $\al_1$, must be odd and bigger than or equal to 1. Hence, we have
$\ldeg(L) \le k_1$ and  $\ldeg(L)\equiv k_1 \pmod 2$.
\end{proof}
\begin{corollary} \label{Cor.d1}
Suppose $L$ is a link diagram on $\BD$ which intersects transversally the arc $\al_i$ at $k_i$ points for $i=1,2,3$.
Then, as an element in $\CS(\BD)$,
$$\ldeg(L) \le k_1, \quad \rdeg(L) \le k_2, \quad 2 \deg_y(L)\le k_3,$$
 and $L$ is a linear $R$-combination of
monomials whose double degrees are equal to $k_1+ k_2$ modulo 2.
\end{corollary}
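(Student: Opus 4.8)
The plan is to deduce this corollary from Lemma~\ref{degree_bound} by resolving all the crossings of the diagram $L$. First I would put $L$ in general position with respect to the three arcs, so that no double point of the diagram lies on $\al_1$, $\al_2$, or $\al_3$; this is what the transversality hypothesis provides, and it can always be arranged by a small isotopy, which changes neither the element of $\CS(\BD)$ nor the intersection numbers $k_i$. With this in force, every intersection of the diagram with an arc $\al_i$ is a single transversal strand crossing, so $k_i$ is literally the number of points of $L\cap\al_i$.

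Next I would apply the skein relation \eqref{eq.skein1} repeatedly to resolve every crossing, together with \eqref{eq.skein2} to absorb trivial circles. This writes $L=\sum_s c_s L_s$, a finite sum in which each $c_s\in R$ and each $L_s$ is an embedded (crossingless) multicurve in $\BD$. The key geometric observation is that every crossing lies off the arcs, so each of its two smoothings alters the diagram only inside a small disk disjoint from $\al_1,\al_2,\al_3$; hence the set of points where the diagram meets each $\al_i$ is unchanged under every smoothing. Consequently each state $L_s$ is an embedded link meeting $\al_i$ transversally at exactly $k_i$ points, for $i=1,2,3$.

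Now I would apply Lemma~\ref{degree_bound} to each $L_s$: it shows that $L_s$ equals a scalar in $R$ (a power of $\lambda_0$ coming from the trivial components, which does not affect the degrees in $x_1,x_2,y$) times a monomial $x_1^{a_1^s}x_2^{a_2^s}y^{b^s}$ satisfying $\ldeg\le k_1$, $\rdeg\le k_2$, $2b^s\le k_3$, and double degree $\equiv k_1+k_2\pmod 2$. Summing over $s$ and using that the degree of a sum of monomials is the maximum of the degrees of the monomials actually appearing, I obtain $\ldeg(L)\le k_1$, $\rdeg(L)\le k_2$, and $2\deg_y(L)\le k_3$; moreover every monomial that survives in $L=\sum_s c_s L_s$ has double degree congruent to $k_1+k_2$ modulo $2$, which is the final assertion.

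There is no deep obstacle here: the corollary is precisely the diagrammatic upgrade of Lemma~\ref{degree_bound}, and the one point requiring genuine care is the geometric claim in the second paragraph, namely that smoothing a crossing located off the arcs preserves the intersection numbers with each $\al_i$ on the nose. It is exactly this that lets each state $L_s$ inherit the same $k_i$, and in particular the same parity of $k_1+k_2$; once it is established, both the degree bounds and the mod~$2$ statement follow immediately from the lemma.
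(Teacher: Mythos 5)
Your proof is correct and follows essentially the route the paper intends: the paper states the corollary without separate proof as an immediate consequence of Lemma \ref{degree_bound}, obtained exactly as you do by resolving all crossings (placed off the arcs $\al_1,\al_2,\al_3$ by general position) and applying the lemma to each resulting embedded state, whose intersection numbers with the arcs are unchanged by the local smoothings. Your explicit attention to the key geometric point---that each smoothing takes place in a small disk disjoint from the arcs, so every state inherits the same $k_i$ and hence the same parity of $k_1+k_2$---is precisely the content of the deduction.
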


\def\Err{E}

\def\xx{E}

\subsection{The $R$-module $V_n$ and the  skein $\gamma$} Let $\gamma$ and $\bar \gamma$ be the following link diagrams on $\BD$,
\be\gamma=  \psdraw{gamma}{1 in}\ , \qquad \bar \gamma=  \psdraw{gamma_bar}{1 in}\ .
\label{eq.gamma}
\ee
Let  $$ V_n= \{ p \in R[x_1, x_2, y] \mid \ldeg(p) \le n, \rdeg(p) \le n, \sdeg(p) \text{ even} \}.$$
In other words,  $V_n \subset R[x_1,x_2,y]$ is the $R$-submodule spanned by $x_1^{a_1} x_2^{a_2} y^b$, with $a_i+b \le n$ for $i=1,2$, and $a_1+ a_2$ even.
\begin{lemma} One has $T_n(\gamma), T_n(\bar \gamma) \in V_n$.
\end{lemma}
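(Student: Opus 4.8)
The plan is to reduce the claim to a crossing count for the parallels of $\gamma$ and then feed that count into Corollary \ref{Cor.d1}. Writing $T_n(z)=\sum_{j=0}^n a_j z^j$ with $a_n=1$, the definition of $p(K)$ gives $T_n(\gamma)=\sum_{j=0}^n a_j\,\gamma^{(j)}$, where $\gamma^{(j)}$ is the diagram consisting of $j$ parallel copies of $\gamma$ taken with the vertical framing. Since $V_n$ is an $R$-submodule and clearly $V_j\subseteq V_n$ whenever $j\le n$, it suffices to prove $\gamma^{(j)}\in V_j$ for each $j$ with $0\le j\le n$; summing then yields $T_n(\gamma)\in V_n$. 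The same reduction applies verbatim to $\bar\gamma$.

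The geometric input I would extract from Figure \eqref{eq.gamma} is that the diagram $\gamma$ meets each of the arcs $\al_1$ and $\al_2$ transversally in exactly one point. Taking $j$ parallel pushoffs inside $\BD$ does not alter the intersection pattern near these arcs, so $\gamma^{(j)}$ meets $\al_1$ at exactly $j$ points and meets $\al_2$ at exactly $j$ points; any self-crossings or framing curls of the parallel strands occur away from $\al_1$ and $\al_2$ and do not affect these counts. I would then apply Corollary \ref{Cor.d1} to the link diagram $L=\gamma^{(j)}$ with $k_1=k_2=j$. This gives $\ldeg(\gamma^{(j)})\le j$ and $\rdeg(\gamma^{(j)})\le j$, and it shows that $\gamma^{(j)}$ is an $R$-linear combination of monomials whose double degree is congruent to $k_1+k_2=2j\equiv 0\pmod 2$, hence even. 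Therefore $\gamma^{(j)}\in V_j\subseteq V_n$, and the reduction of the first paragraph finishes the bound for $T_n(\gamma)$.

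For $\bar\gamma$ the cleanest route is the symmetry: if $\bar\gamma=\sigma(\gamma)$, then since $\sigma$ is an algebra automorphism interchanging $x_1$ and $x_2$, it swaps $\ldeg$ and $\rdeg$, preserves $\sdeg$, and hence preserves $V_n$; together with $\sigma(\gamma^{(j)})=\bar\gamma^{(j)}$ and linearity this gives $T_n(\bar\gamma)=\sigma(T_n(\gamma))\in V_n$. If that identification of $\bar\gamma$ with $\sigma(\gamma)$ is not literally the case, I would instead repeat the second paragraph directly, reading off from the figure that $\bar\gamma$ also crosses $\al_1$ and $\al_2$ exactly once.

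The only real content, and the step I expect to require the most care, is the geometric observation that $\gamma$ and $\bar\gamma$ each cross $\al_1$ and $\al_2$ exactly once, and that passing to $j$ parallels multiplies these two counts by $j$ while introducing no further intersections with $\al_1$ or $\al_2$. This must be checked against Figure \eqref{eq.gamma}; in particular one should ensure the diagram is in general position with respect to the arcs and that no crossing of $\gamma^{(j)}$ lies on $\al_1$ or $\al_2$, so that Corollary \ref{Cor.d1} applies with exactly $k_1=k_2=j$. Everything after that is the routine bookkeeping of the three degrees and the inclusions $V_j\subseteq V_n$.
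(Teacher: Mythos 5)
Your proof is correct and is essentially the paper's own argument: the paper likewise observes that the $j$-parallel of $\gamma$ (written $\gamma^j$ there) meets each of $\al_1$ and $\al_2$ in exactly $j$ points, applies Corollary \ref{Cor.d1} to get $\gamma^{(j)}\in V_j\subseteq V_n$, and concludes since $T_n(\gamma)$ is a $\BZ$-linear combination of these parallels. One caveat: your preferred route for $\bar\gamma$ via $\sigma$ does not apply, since the $180^\circ$ rotation fixes both curves, $\sigma(\gamma)=\gamma$ and $\sigma(\bar\gamma)=\bar\gamma$ (the relation between $\gamma$ and $\bar\gamma$ is instead the mirror image, switching the crossing, cf.\ the remark $T_N(\bar\gamma)=T_N(\gamma)\big|_{t\to t^{-1}}$), so your stated fallback---repeating the intersection count directly for $\bar\gamma$---is the right one, and is exactly the paper's ``the proof for $\bar\gamma$ is similar.''
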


\begin{proof} The diagram  $\gamma^k$
 has $k$ intersection points with each of $\al_1$ and $\al_2$. By Corollary \ref{Cor.d1}, $\ldeg(\gamma^k) \le k, \rdeg(\gamma^k) \le k$, and
each monomial of $\gamma^k$ has double degree $\equiv k+ k\equiv 0 \pmod 2$. This means  $\gamma^k \in V_k$ for every $k \ge 0$.
Because $T_n(\gamma)$ is $\BZ$-linear combination of $\gamma^k$ with $ k \le n$, we have $T_n(\gamma) \in V_n$. The proof for $\bar \gamma$ is similar.
\end{proof}

\begin{remark}
It is an easy exercise to show that $T_N(\bar \gamma)= T_N(\gamma)\big|_{t\to t^{-1}}$.
\end{remark}

\section{Skein module of  twice puncture disk at root of 1} \label{sec.6}
Recall that $\gamma$ and $\bar \gamma$ are knot diagrams on $\BD$ defined by \eqref{eq.gamma}.
The following was proved by Bonahon and Wong, using  quantum Teichm\"uller algebras and their representations.

\begin{proposition}\label{prop.main1}
Suppose $\xi^4$ is a  root of 1 of order $N$. Then in $\CS_\xi(\BD)$ one has
\begin{align}
T_N(\gamma)&= \xi^{-N^2} T_N(y) + \xi ^{N^2} T_N(x_1)\, T_N( x_2)  \label{eq:main} \\
T_N(\bar\gamma)&= \xi^{N^2} T_N(y) + \xi ^{-N^2} T_N(x_1)\, T_N( x_2) \label{eq:main2}.
\end{align}
\end{proposition}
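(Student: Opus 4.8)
The plan is to establish the first identity \eqref{eq:main} directly and then obtain \eqref{eq:main2} from it by symmetry. For the deduction, recall the remark that $T_N(\bar\gamma) = T_N(\gamma)\big|_{t\to t^{-1}}$: applying the involution $t\mapsto t^{-1}$ to \eqref{eq:main} fixes the flat, vertically framed curves $x_1,x_2,y$ (hence fixes $T_N(y)$ and $T_N(x_1)T_N(x_2)$) while interchanging the scalars $\xi^{N^2}\leftrightarrow\xi^{-N^2}$, turning \eqref{eq:main} into \eqref{eq:main2}. So I would concentrate on \eqref{eq:main}. By the preceding lemma both sides lie in $V_N$, so the problem is reduced to computing the single element $T_N(\gamma)\in R[x_1,x_2,y]$ explicitly and matching it against $\xi^{-N^2}T_N(y) + \xi^{N^2}T_N(x_1)T_N(x_2)$.

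The geometric idea is that $\gamma$, as drawn in \eqref{eq.gamma}, has a single essential crossing lying over the vertical arc $\al_3$; to one side of the crossing $\gamma$ runs around both punctures, and the two resolutions of the crossing either merge the strands into a single loop encircling both holes or split them into one loop around each hole. Cutting $\BD$ along $\al_3$ exhibits a neighborhood of the crossing as a copy of the marked annulus $\Aio$, with the annulus core $z$ playing the role of the strand being cabled and $\uu^{\pm1}$ the two resolutions. Resolving the central crossing of the $N$-cable of $\gamma$ is then governed by Proposition \ref{prop.transp1}: the Chebyshev coloring produces exactly the two extreme terms
\[
 T_N(z)\bullet e = t^{N}\,\uu^{N} + t^{-N}\,\uu^{-N},
\]
with no intermediate terms. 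I would then identify the re-embedding of $\uu^{N}$ into $\BD$ with the $T_N$-colored loop encircling both punctures, which is literally $T_N(y)$, and the re-embedding of $\uu^{-N}$ with the split configuration, which is $T_N(x_1)\,T_N(x_2)$; here it is essential that $y$, $x_1$, $x_2$ are flat simple closed curves, so that $T_N$ of them has its honest algebra meaning.

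The remaining work is the bookkeeping of framing and signs. The $N$-cable of $\gamma$ carries the writhe of the single crossing, so passing from the cabled crossing back to the flat curves introduces framing-change factors computed by \eqref{eq.framing}; together with the factors $t^{\pm N}$ above and the hypothesis that $\xi^4$ has order $N$ (so that $\xi^{2N}=\mu=\pm1$ and $\xi^{-2N}-\xi^{2N}=0$), these should evaluate at $t=\xi$ to the coefficients $\xi^{\mp N^2}$. The vanishing $\xi^{2N}-\xi^{-2N}=0$ is precisely what makes $T_N$ transparent, by Corollary \ref{cor.5}, so that the stray encircling loops generated in the re-embedding can be absorbed with the correct sign; the same vanishing is visible in Proposition \ref{prop.Psi}, whose $u_1$-coefficient $t^2(t^{-2N}-t^{2N})S_{N-1}(z)$ collapses at $t=\xi$, leaving a single clean transparent term. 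I expect this last step — matching the geometric re-embeddings of $\uu^{\pm N}$ exactly to $T_N(y)$ and $T_N(x_1)T_N(x_2)$ and pinning the twist factors down to $\xi^{\pm N^2}$ — to be the main obstacle, and the annulus computations of Section \ref{sec.4} (Propositions \ref{prop.un} and \ref{prop.Psi}) are the intended engine for carrying it out.
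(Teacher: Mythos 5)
There is a genuine gap at the heart of your argument, and it is exactly the step you flag as ``the main obstacle'': the claim that resolving the crossing of the $T_N$-cabled $\gamma$ is ``governed by Proposition \ref{prop.transp1}'' and produces only the two extreme terms. Proposition \ref{prop.transp1} computes $T_N(z)\bullet e$, where the $T_N$-colored object $z$ is a \emph{closed curve disjoint from} the strand $e$ it passes over; the computation works because $L\mapsto L\bullet e$ is an algebra homomorphism $R[z]\to R[\uu^{\pm1}]$ sending $z\mapsto t\uu+t^{-1}\uu^{-1}$, so one may apply $T_N(u+u^{-1})=u^N+u^{-N}$. The crossing of $\gamma$ in \eqref{eq.gamma} is a \emph{self}-crossing: in $T_N(\gamma)=\sum_j a_j\,\gamma^{(j)}$ the cable $\gamma^{(j)}$ has a $j\times j$ grid of crossings among strands of one and the same colored component, and there is no decomposition of this configuration as ``$T_N(\text{closed curve})$ crossing a separate strand'', hence no homomorphism trick. (This is the same warning as the Remark in the introduction: for a non-flat knot, $p(K)$ is not $p$ applied to $K$ in the algebra $\CS(\Sigma)$; in particular, although $\gamma=t\,x_1x_2+t^{-1}y$ in the commutative algebra $\CS(\BD)$, you may not compute $T_N(\gamma)$ from this.) Your appeal to Proposition \ref{prop.Psi} actually illustrates the problem: even for a \emph{separate} arc passing through the $T_N$-colored annulus core, \eqref{eq.104} produces intermediate $S_k$-terms, which collapse at $t=\xi$ only because $\xi^{4N}=1$ -- and that collapse is the transparency statement for disjoint strands (Corollary \ref{cor.5}), not a statement about self-crossings. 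Asserting ``no intermediate terms'' at the self-crossing of the cable is essentially a restatement of Proposition \ref{prop.main1} itself, the ``miraculous cancellation'', not a proof of it. A secondary error compounds this: even granting your resolution, the closure of $\uu^{N}$ (which is $N$ parallel arcs) would give the flat power $y^{N}=y^{(N)}$, not $T_N(y)$; the appearance of the Chebyshev polynomials $T_N(y)$ and $T_N(x_1)T_N(x_2)$, rather than $y^N$ and $x_1^Nx_2^N$, on the right-hand side of \eqref{eq:main} is precisely the lower-order information your two-term resolution cannot see.

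For contrast, the paper uses transparency only where it is legitimate -- against curves \emph{disjoint} from $\gamma$: it shows $E=T_N(\gamma)-\xi^{N^2}T_N(x_1)T_N(x_2)-\xi^{-N^2}T_N(y)$ is a simultaneous eigenvector of $\sigma$ and of the boundary operators $\Phi_0,\Phi_1,\Phi_3,\Phi_4$, i.e.\ $E\in W$; it computes the extreme coefficients $\coeff(\cdot,y^N)$ and $\coeff(\cdot,x_1^Nx_2^N)$ by the all-negative and all-positive smoothings of the $N^2$ crossings of $\gamma^N$ (Lemma \ref{lem.degree}) -- your heuristic correctly predicts exactly these two coefficients $\xi^{\mp N^2}$, and nothing more; it then pins down $W$ by the filtration/eigenvalue arguments of Lemmas \ref{lem.y-deg} and \ref{lem.x1x2} to conclude $E\in\BC$, and finally kills the constant by evaluating under $\iota:\CS_\xi(\BD)\to\CS_\xi(\BR^3)$ with the framing formulas \eqref{eq.framing} and \eqref{eq.U}. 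Your deduction of \eqref{eq:main2} from \eqref{eq:main} via $t\mapsto t^{-1}$ is fine and matches the paper's remark, but the proof of \eqref{eq:main} needs the eigenvector machinery (or some substitute of comparable strength); the one-crossing resolution you propose does not exist as stated.
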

As mentioned above, there was an urge to find a proof using elementary skein theory; one such is   presented here.
Our proof roughly goes as follows. Using the transparent property of $T_N(\gamma)$, we show that $T_N(\gamma)$
is a common eigenvector of several operators. We then prove that the space of common eigenvectors has dimension at most 3, with a simple basis. We then fix coefficients of $T_N(\gamma)$ in this
basis using calculations in highest order. Then the result turns out to be the right hand side of \eqref{eq:main}.

Throughout this section we fix a complex number $\xi$ such that $\xi^4$ is a root of unity of order $N$.
 Define $\ve= \xi^{N^2}$. We will write $V_{N,\xi}$ simply by $V_N$ and $\lambda_k$ for $\lambda_k(\xi)$. Thus, in the whole section,
$$ \lambda_k= -(\xi^{2k+2} + \xi^{-2k-2}).$$

\def\cH{\mathcal H}

\subsection{Properties of $\xi$ and  $\lambda_k$} Recall that $\xi^4$ is a root of 1 of order $N$.
\begin{lemma} \label{lem.lambda}
Suppose  $ 1\le k \le N-1$. Then

(a) $\lambda_{2k} =  \lambda_0$ if and only if $k=N-1$.

(b)  $\lambda_k = \xi^{2N} \lambda_0 $ implies that $k=N-2$.

 (c) If  $N$ is even then $\xi^{2N}=-1$.

 (d) One has
 \be \xi^{2N^2 + 2N} = (-1)^{N+1}.
 \label{eq.xi}
 \ee

\end{lemma}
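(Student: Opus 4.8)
The plan is to reduce all four assertions to elementary order computations for the root of unity $w:=\xi^2$. Since $\xi^4=w^2$ has order $N$, the identity $\operatorname{ord}(w^2)=\operatorname{ord}(w)/\gcd(2,\operatorname{ord}(w))$ forces $\operatorname{ord}(w)\in\{N,2N\}$, with $\operatorname{ord}(w)=N$ possible only when $N$ is odd. Set $\mu:=\xi^{2N}=w^N$; then $\mu^2=\xi^{4N}=1$, so $\mu=\pm1$, and $\mu=1$ holds exactly when $\operatorname{ord}(w)=N$. The one computational tool I would record first is the criterion
\[
 w^a+w^{-a}=w^b+w^{-b} \iff w^a=w^b \ \text{or}\ w^{a+b}=1,
\]
obtained by factoring $w^a+w^{-a}-w^b-w^{-b}=(w^a-w^b)(1-w^{-a-b})$. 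Combined with the remark that $w^{2j}=(\xi^4)^j=1$ iff $N\mid j$, this turns each equality of $\lambda$'s into a divisibility condition.

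I would prove part (c) first, since the later parts use the resulting dichotomy. If $\xi^{2N}=1$ then $\operatorname{ord}(w)\mid N$; together with $\operatorname{ord}(w^2)=N$ this forces $\operatorname{ord}(w)$ odd and equal to $N$, contradicting $N$ even. Hence $\xi^{2N}=-1$ when $N$ is even, and more generally $\mu=-1$ precisely when $\operatorname{ord}(w)=2N$.

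For (a), write $\lambda_{2k}=-(w^{2k+1}+w^{-(2k+1)})$ and $\lambda_0=-(w+w^{-1})$; the criterion gives $\lambda_{2k}=\lambda_0$ iff $w^{2k}=1$ or $w^{2k+2}=1$, i.e. iff $N\mid k$ or $N\mid(k+1)$, and in the range $1\le k\le N-1$ only $k=N-1$ survives. For (b), I would use $w^{2N}=1$ to rewrite $\mu\lambda_0=\xi^{2N}\lambda_0=-(w^{N+1}+w^{N-1})=-(w^{N-1}+w^{-(N-1)})$, so that $\lambda_k=\mu\lambda_0$ becomes $w^{k+1}+w^{-(k+1)}=w^{N-1}+w^{-(N-1)}$. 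The criterion then yields $w^{k+1}=w^{N-1}$ or $w^{k+N}=1$; testing each branch against both possible values $\operatorname{ord}(w)\in\{N,2N\}$, the only solution with $1\le k\le N-1$ is $k=N-2$.

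Finally, (d) is immediate from (c) and a parity check: $\xi^{2N^2+2N}=(\xi^{2N})^{N+1}=\mu^{N+1}$, and if $N$ is odd then $N+1$ is even so $\mu^{N+1}=1=(-1)^{N+1}$ for either sign of $\mu$, while if $N$ is even then $\mu=-1$ and $N+1$ is odd, giving $\mu^{N+1}=-1=(-1)^{N+1}$. The main obstacle is not conceptual but the bookkeeping in (b) and (d): because the hypothesis does not pin down $\operatorname{ord}(w)$ (both $N$ and $2N$ occur for $N$ odd), one must carry the sign $\mu$ through rather than fixing it and check the divisibility branches in both cases; keeping the rewriting $w^{N+1}=w^{-(N-1)}$ (valid since $w^{2N}=1$) straight is where care is needed.
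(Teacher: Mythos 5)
Your proof is correct and takes essentially the same approach as the paper: both hinge on factoring the differences of $\lambda$'s into two cyclotomic-type factors (your criterion from $(w^a-w^b)(1-w^{-a-b})$ with $w=\xi^2$ is exactly the paper's factorizations such as $\lambda_{2k}-\lambda_0=-\xi^{-2-4k}(\xi^{4k}-1)(\xi^{4k+4}-1)$) and then converting to divisibility conditions via the order $N$ of $\xi^4$. Your explicit case analysis $\operatorname{ord}(\xi^2)\in\{N,2N\}$ is just a repackaging of the paper's squaring step in (b), and you additionally write out part (d), which the paper leaves to the reader.
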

\begin{proof} (a) With $\lambda_k = -(\xi^{2k+2} + \xi^{-2k-2})$, we have
$$\lambda_{2k} -  \lambda_0=  -\xi^{-2-4k} (\xi^{4k} -1) (\xi^{4k+4} -1).$$
Hence, $\lambda_{2k} -  \lambda_0= 0$ if and only if either $N|k$ or $N| (k+1)$. With $1\le k\le N-1$, this is equivalent to $k=N-1$.

 (b)  We have
$$\lambda_{k} -  \xi^{2N} \lambda_0=  -\xi^{-2N-2}(\xi^{2N-2k} -1) (\xi^{2N+2k+4} -1).$$
Either (i) $\xi^{2N-2k} =1$ or (ii) $\xi^{2N+2k+4} =1$. Taking the squares of both identities, we see that either $N | (N-k)$ or $N | (k+2)$. With $ 1\le k \le N-1$, we
conclude that $k=N-2$.

(c) Suppose $N$ is even. Since $\xi^4$ has order $N$, one has $(\xi^4)^{N/2}=-1$. Then $\xi^{2N} = (\xi^4)^{N/2} =-1$.

(d) is left for the reader.
\end{proof}

\subsection{Operators $\Phi_i$ and  the vector space $W$} \label{sec.W}
Recall that  $\Phi_i:= \Phi_{\al_i}$, $i=0,1,2,3$, is defined in section \ref{sec:Phi}.
Then  $\Phi_i(V_N) \subset V_N$ for $i=0,1,2,3$.

Let $\Phi_4$ be the curve on $\partial \BD \times [-1, 1]$ depicted in Figure \ref{Phi4}.
\begin{figure}[htpb]
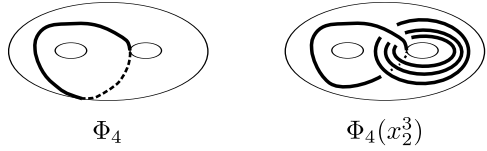

$$ \psdraw{Phi4}{2.5in} $$
\caption{The curve $\Phi_4$ and $\Phi_4(x_2^3)$}
\label{Phi4}
\end{figure}
Here we draw $\cH=\BD \times [-1, 1]$ as a handlebody.
We also depict $\Phi_4(x_2^3)$.

We don't have $\Phi_4(V_N)  \subset V_N$, since $\Phi_4$ in general increases the
double degree.
By counting the intersection points with $\al_1$ and $\al_2$, we have, for every $E \in \CS_\xi(\BD) = \BC[x_1, x_2, y]$,
\be
\sdeg(\Phi_4(\xx) ) \le \sdeg(\xx)+1.
\ee

\begin{proposition} If $\xx$ is one of $\{T_N(\gamma), T_N(\bar \gamma), T_N(y),T_N(x_1) T_N(x_2)\}$,  then one has
\begin{align}
\sigma(\xx)& = \xx \label{eq:eigen} \\
\Phi_1 (\xx) & = \xi^{2N}\, \lambda_0\, \xx    \label{eq:eigen1} \\
\Phi_i (\xx) & =  \lambda_0\, \xx   \quad \text{for } \ i=0,3 \label{eq:eigen3},  \\
\Phi_4(\xx) &=
\xi^{2N} x_1 \, \xx.
\label{eq:eigen4}
\end{align}
\end{proposition}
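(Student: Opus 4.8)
The plan is to deduce all four identities from two facts established earlier: first, that $\sigma$ is an algebra automorphism of $\CS(\BD)=R[x_1,x_2,y]$ with $\sigma(y)=y$, $\sigma(x_1)=x_2$, $\sigma(x_2)=x_1$; and second, the $\mu$-transparency of $T_N$ from Corollary \ref{cor.5}, where $\mu:=\xi^{2N}=\pm1$ (recall $\xi^{4N}=1$ because $\xi^4$ has order $N$). The unifying observation is that each of the four elements is $T_N(L)$ for an embedded knot or link $L\subset\BD$, namely $L\in\{\gamma,\bar\gamma,y,x_1\sqcup x_2\}$ (with $T_N(x_1\sqcup x_2)=T_N(x_1)\,T_N(x_2)$, since the two loops are disjoint), and that all four carry the same mod-$2$ homology class $[x_1]+[x_2]\in H_1(\BD;\BZ/2)$: this is clear for $y$ and $x_1\sqcup x_2$, and is read off the figure for $\gamma,\bar\gamma$ (a crossing change affects neither the class nor the crossings of the diagram with an arc). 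For the $\sigma$-invariance \eqref{eq:eigen} I use only that $\sigma$ is an algebra automorphism: $\sigma(T_N(y))=T_N(y)$ and $\sigma(T_N(x_1)T_N(x_2))=T_N(x_2)T_N(x_1)=T_N(x_1)T_N(x_2)$ by commutativity, while the $180^\circ$ rotation carries the diagrams $\gamma,\bar\gamma$ to isotopic ones (rotation preserves crossing type), so $\sigma(T_N(\gamma))=T_N(\sigma(\gamma))=T_N(\gamma)$ and likewise for $\bar\gamma$.

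For the scalar eigenvalues \eqref{eq:eigen1} and \eqref{eq:eigen3}, note that for $i=0,1,3$ the curve $\Phi_i$ bounds the properly embedded disk $\cD_{\al_i}=\al_i\times[-1,1]$, disjoint from the interior link $L$. Hence the disk consequence of $\mu$-transparency stated just after Corollary \ref{cor.5} gives
$$\Phi_i(\xx)=\Phi_i\cup T_N(L)=(\xi^{2N})^{\,I_2(\cD_{\al_i},L)}\,\lambda_0\,T_N(L)=(\xi^{2N})^{\,I_2(\al_i,L)}\,\lambda_0\,\xx,$$
where $I_2(\al_i,L)$ is the mod-$2$ number of crossings of the diagram of $L$ with $\al_i$ (only its parity enters, since $\xi^{2N}=\pm1$). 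I compute these through the intersection pairing on $H_1(\BD;\BZ/2)=\langle[x_1],[x_2]\rangle$: one has $I_2(\al_1,x_1)=1$, $I_2(\al_1,x_2)=0$, $I_2(\al_0,x_1)=I_2(\al_0,x_2)=1$, while $\al_3$ separates $\BD$ into two once-punctured pieces, so every closed curve crosses it evenly. Evaluating on the class $[x_1]+[x_2]$ yields $I_2(\al_1,L)=1$, $I_2(\al_0,L)=0$, $I_2(\al_3,L)=0$, which are precisely \eqref{eq:eigen1} and \eqref{eq:eigen3}.

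The substantive case is \eqref{eq:eigen4}. Since $\Phi_4$ raises the double degree it bounds no disk, so I use full $\mu$-transparency instead. From the figure I exhibit an isotopy $H$ in $\BD\times[-1,1]$ carrying $\Phi_4$ to a loop $x_1$ around the first hole; applying Corollary \ref{cor.5} componentwise to $\xx=T_N(L)$ then gives
$$\Phi_4(\xx)=T_N(L)\cup\Phi_4=(\xi^{2N})^{\,I_2(H,L)}\;T_N(L)\cup x_1=(\xi^{2N})^{\,I_2(H,L)}\,x_1\,\xx.$$
It remains to see that $I_2(H,L)=1$ for every $L$. As $I_2(H,-)$ is a $\BZ/2$-linear function of the homology class (the trace of $H$ is a surface with boundary $\Phi_4\cup x_1$, disjoint from $L$), and all four $L$ have class $[x_1]+[x_2]$, it suffices to check $I_2(H,x_1)+I_2(H,x_2)=1$, i.e.\ that the trace meets $x_1\sqcup x_2$ in an odd number of points; this is read from the figure.

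I expect the main obstacle to be exactly this last case. In it one small care is needed, since for $L=x_1\sqcup x_2$ the target curve must stay disjoint from the colored link: I take the target to be a parallel push-off of $x_1$, so that transparency applied to each component yields the same exponent $I_2(H,x_1)+I_2(H,x_2)=1$. Constructing the explicit isotopy $\Phi_4\rightsquigarrow x_1$, verifying that its trace meets $x_1\sqcup x_2$ oddly, and arranging the disjointness hypotheses of $\mu$-transparency via push-offs so that the identity holds on the nose in $\CS_\xi(\BD)$ — that is where the real work lies. The remaining three identities reduce to bookkeeping with the homological intersection pairing on the twice-punctured disk.
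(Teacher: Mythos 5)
Your proposal is correct and follows essentially the same route as the paper, whose entire proof consists of observing that each of $\gamma, \bar\gamma, y, x_1\cup x_2$ is invariant under $\sigma$ (giving \eqref{eq:eigen}) and that \eqref{eq:eigen1}--\eqref{eq:eigen4} follow from the $\xi^{2N}$-transparency of $T_N(z)$, Corollary \ref{cor.5}. Your mod-$2$ homology bookkeeping for the intersection numbers with $\cD_{\al_i}$ and with the trace of the isotopy carrying $\Phi_4$ to a push-off of $x_1$ simply makes explicit the figure-reading that the paper leaves to the reader.
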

\begin{proof}
The first identity follows from the fact that each of $\gamma, \bar \gamma, y, x_1 \cup x_2$ is invariant under $\sigma$. The remaining identities follows from the $\xi^{2N}$-transparent property of $T_N(z)$, Corollary \ref{cor.5}.
\end{proof}
\begin{remark}
Note that $\Phi_4$ is $\BC[x_1]$-linear and \eqref{eq:eigen4} says that $E$ is a $\xi^{2N} x_1 $-eigenvector of $\Phi_4$.
\end{remark}

Let $W$ be the subspace of $V_N$ consisting of  elements satisfying \eqref{eq:eigen}--\eqref{eq:eigen4}. This means, $W \subset V_N$ consists of elements which are at the same time
1-eigenvector of $\sigma$, $\xi^{2N} \lambda_0$-eigenvector of $\Phi_1$, $\lambda_0$-eigenvector of $\Phi_0$ and $\Phi_3$, and $\xi^{2N} x_1 $-eigenvector of $\Phi_4$.

 We will show that $W$ is spanned by  $T_N(y), T_N(x_1) T_N(x_2)$, and possibly $1$.

\def\coeff{\mathrm{coeff}}

\subsection{Action of $\Phi_3$, $\Phi_0$, and $\Phi_1$} 
For an element $F \in \BC[x_1, x_2, y]$ and a monomial $\mm= x_1^{a_1} x_2^{a_2} y^b$ let
$\coeff(F, \mm)$ be the coefficient of $\mm$ in $F$.

\begin{lemma} 
Suppose $\xx \in W$ and $\coeff(\xx,y^N)=0$. Then $\xx \in \BC[x_1, x_2]$.
\label{lem.y-deg}
\end{lemma}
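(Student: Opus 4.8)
The plan is to expand $E$ in powers of $y$ and then exploit the two filtration operators $\Phi_3$ and $\Phi_0$, whose eigen-equations on $W$ are recorded in \eqref{eq:eigen1}--\eqref{eq:eigen3}. Write $E=\sum_{b=0}^{d}c_b\,y^b$ with $c_b\in\BC[x_1,x_2]$, $c_d\neq 0$, where $d=\deg_y E$; the goal is to show $d=0$. Two geometric observations drive everything. The curves $x_1,x_2$ can be isotoped off the disk $\cD_{\al_3}$ (each meets $\al_3$ in $0$ points while $y$ meets it in $2$), so $\Phi_3$ is $\BC[x_1,x_2]$-linear and $\fil_{\al_3}(x_1^{a_1}x_2^{a_2}y^b)=2b$; dually, $y$ can be isotoped off $\cD_{\al_0}$, so $\Phi_0$ is $\BC[y]$-linear.

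First I would pin down $d$ using $\Phi_3$. By Proposition \ref{eigenvalue} and the count $\fil_{\al_3}(y^b)=2b$, $\BC[x_1,x_2]$-linearity gives $\Phi_3(c_b y^b)=\lambda_{2b}c_b y^b$ modulo lower $y$-degree, so comparing top $y$-degree in $\Phi_3(E)=\lambda_0 E$ forces $\lambda_{2d}=\lambda_0$. Since $E\in V_N$ we have $d\le N$, and for the coefficient of $y^N$ the bounds $a_i+N\le N$ force $a_1=a_2=0$; hence $c_N$ equals the constant $\coeff(E,y^N)$, which vanishes by hypothesis, so $d\le N-1$. Then Lemma \ref{lem.lambda}(a) leaves only $d=0$ or $d=N-1$.

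It remains to exclude $d=N-1$, and this is the main obstacle. Here the $\BC[y]$-linearity of $\Phi_0$ helps: since $\Phi_0(E)=\lambda_0 E$ acts coefficientwise in $y$, each $c_b$ is a $\lambda_0$-eigenvector of $\Phi_0$ on $\BC[x_1,x_2]$. For $b=N-1$ the $V_N$-bounds force $c_{N-1}=\alpha+\beta\,x_1x_2$, and matching the coefficient of $x_1x_2$ in $\Phi_0(c_{N-1})=\lambda_0 c_{N-1}$ gives $\beta(\lambda_2-\lambda_0)=0$; using $\lambda_2\neq\lambda_0$ for $N>2$ (Lemma \ref{lem.lambda}(a)), this reduces $c_{N-1}$ to a nonzero constant $\alpha$, the small cases $N\le 2$ being checked by hand.

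The real work is to turn this into a contradiction using the remaining operator $\Phi_1$ (together with $\sigma$). On $W$ the operator $\Phi_1$ has eigenvalue $\xi^{2N}\lambda_0$, so its top left-degree $m=\ldeg(E)$ satisfies $\lambda_m=\xi^{2N}\lambda_0$; the surviving term $\alpha y^{N-1}$ contributes left-degree $N-1$, so $m\in\{N-1,N\}$, and Lemma \ref{lem.lambda}(b) rules out $m=N-1$ (it would force $N-1=N-2$), leaving $\ldeg(E)=N$. This produces lower-$y$-degree monomials $x_1^{a_1}x_2^{a_2}y^{b}$ of maximal left-degree $a_1+b=N$. I expect to drive these to a contradiction by feeding them back through the $\Phi_0$- and $\Phi_1$-eigen-equations in a downward induction on $y$-degree, using $\sigma$-symmetry to control $x_1$ and $x_2$ simultaneously, with the finitely many small values of $N$ (where several $\lambda_k$ collide) treated directly. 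Completing this bookkeeping cleanly is the crux; the pure-$y$ case is already instructive, since $\Phi_1$ excludes a degree-$(N-1)$ eigenvector in $\BC[y]$ outright because $\lambda_{N-1}\neq\xi^{2N}\lambda_0$.
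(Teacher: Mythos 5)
Your opening moves match the paper -- using $\Phi_3$ and Lemma \ref{lem.lambda}(a) to force $d:=\deg_y E\in\{0,N-1\}$, and the $V_N$-bounds to write $c_{N-1}=\alpha+\beta\,x_1x_2$ -- but the step killing $\beta$ rests on a false claim. $\Phi_0$ is indeed $\BC[y]$-linear, yet it does \emph{not} preserve $\BC[x_1,x_2]$, so the eigen-equation $\Phi_0(E)=\lambda_0 E$ does not decouple coefficientwise in $y$: one computes $\Phi_0(x_1x_2)=(1-t^4)(1-t^{-4})\,y+O(y^0)$, and the $y$-term is nonzero at $\xi$ because $\xi^4\neq1$. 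Hence your assertion that each $c_b$ is a $\lambda_0$-eigenvector of $\Phi_0$ on $\BC[x_1,x_2]$ is unjustified, and the relation $\beta(\lambda_2-\lambda_0)=0$ you extract ignores exactly the term that matters. The correct use of $\Phi_0$ here is the opposite of yours: since $\Phi_0$ is $\BC[y]$-linear and raises $y$-degree by at most one, $\beta\neq0$ would make $\Phi_0(E)$ contain $\beta(1-\xi^4)(1-\xi^{-4})\,y^N$, so $\deg_y\Phi_0(E)=N>\deg_y E$ and $E$ could not be a $\Phi_0$-eigenvector. This kills $\beta$ uniformly in $N\ge2$, with no exceptional case at $N=2$, where your appeal to $\lambda_2\neq\lambda_0$ breaks down (Lemma \ref{lem.lambda}(a) gives $\lambda_2=\lambda_0$ precisely when $N=2$) and your promised ``check by hand'' is not supplied.

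Your endgame is also a genuine, admitted gap, and the route you sketch cannot close it as stated: comparing top left-degree under $\Phi_1$ gives $\lambda_m=\xi^{2N}\lambda_0$ for $m=\ldeg(E)$, but when $\xi^{4N}=1$ the identity $\lambda_N=\xi^{2N}\lambda_0$ holds automatically, so $m=N$ yields no contradiction, and the downward induction over all monomials with $a_1+b=N$ that you ``expect'' to work is an uncontrolled detour. The contradiction is immediate in the $y$-grading, and your final sentence already contains its essential case: once $c_{N-1}=\alpha\neq0$ is a \emph{constant}, compare $y^{N-1}$-coefficients under $\Phi_1$. By counting intersections with $\al_3$, $\Phi_1$ does not raise $y$-degree, and Proposition \ref{eigenvalue} gives $\Phi_1(y^{N-1})=\lambda_{N-1}\,y^{N-1}+O(y^{N-2})$ (terms of lower $\al_1$-filtration satisfy $a_1+b\le N-2$, hence have $y$-degree $\le N-2$); the lower $c_b$'s cannot feed into degree $N-1$. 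Therefore $\Phi_1(E)=\alpha\lambda_{N-1}y^{N-1}+O(y^{N-2})$, forcing $\lambda_{N-1}=\xi^{2N}\lambda_0$, which contradicts Lemma \ref{lem.lambda}(b) since $N-1\neq N-2$. This is exactly the paper's finish; your ``pure-$y$'' observation applies verbatim to the top $y$-coefficient, no induction needed.
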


\begin{proof} 
 Let $k$ be the $y$-degree of $E$. Since $E\in W$ and $\coeff(E,y^N)=0$, one has $k \le N-1$.

 We need to show that $k=0$.
Suppose the contrary that $1\le k$. Then  $1\le k \le N-1$.

First we will prove $k=N-1$, using the fact that $\xx$ is a $\lambda_0$-eigenvector of $\Phi_3$ by \eqref{eq:eigen3}.

Recall that $\fil_{\al_3}$ is twice the $y$-degree. One has  $\fil_{\al_3}(E)=2k$. Thus  $\xx \neq 0 \in \cF^{\al_3}_{2k} /\cF^{\al_3}_{2k-1}$.
By Proposition \ref{eigenvalue}, any non-zero element in $\cF^{\al_3}_{2k} /\cF^{\al_3}_{2k-1}$ is an eigenvector of $\Phi_3$ with eigenvalue $\lambda_{2k}$. But $\xx$ is an eigenvector of $\Phi_3$ with eigenvalue $\lambda_0$. It follows that $\lambda_{2k} = \lambda_0$. By Lemma \ref{lem.lambda}, we have $k=N-1$.

 Because $\sdeg(\xx)$ is even and $\le 2N$, we must have
$$ \xx= y^{N-1} (c_1 x_1 x_2 + c_2)  + O(y^{N-2}), \quad c_1, c_2 \in \BC.$$
We will prove $c_1=0$ by showing that otherwise, $\Phi_0$ will increase the $y$-degree.
Note that  $\Phi_0$ can increase the $y$-degree by at most 1, and $\Phi_0$ is $\BC[y]$-linear. We have
\be
 \Phi_0(\xx)= y^{N-1} (c_1 \Phi_0( x_1 x_2) + c_2 \Phi_0(1)) +  O(y^{N-1}).
 \label{e52}
 \ee
The diagram of $\Phi_0(x_1 x_2)$ has 4 crossings, see Figure \ref{Phi0x1x2}.
\begin{figure}[htpb]
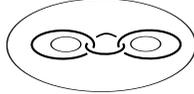

$$ \psdraw{Phixx}{1in} $$
\caption{$\Phi_0(x_1 x_2)$}
\label{Phi0x1x2}
\end{figure}
A simple calculation shows
$$ \Phi_0( x_1 x_2) = (1-t^4)(1-t^{-4}) y + O(y^0) .$$
Plugging this value in \eqref{e52}, with $\Phi_0(1)= \lambda_0\in \BC$,
\be
 \Phi_0(\xx)= y^{N} c_1  (1-t^4)(1-t^{-4})+  O(y^{N-1}).
 \ee
If $c_1\neq 0$, then the $y$-degree of $\Phi_0(\xx)$ is $N$, strictly bigger than that of $\xx$, and $\xx$ cannot be an eigenvector of $\Phi_0$. Thus $c_1=0$.

One has now
\be \xx = c_2 y^{N-1} + O(y^{N-2})
\label{e47}
\ee
Since the $y$-degree of $\xx$ is $N-1$, one must have  $c_2 \neq 0$.
By counting the intersections with $\al_3$, we see that $\Phi_1$ does not increase the $y$-degree. We have
\begin{align*}
 \Phi_1(\xx)& = c_2 \Phi_1(y^{N-1}) + O(y^{N-2}) \\
 &= c_2 \, \lambda_{N-1} y^{N-1} + O(y^{N-2}) \quad \text{by Proposition \ref{eigenvalue}}.
 \end{align*}
Comparing the above identity with \eqref{e47} and using the fact that $\xx$ is a $\xi^{2N} \lambda_0$-eigenvector of $\Phi_1$, we have
$$ \lambda_{N-1} = \xi^{2N} \lambda_0,$$
which is impossible since Lemma \ref{lem.lambda} says that $\lambda_k = \xi^{2N} \lambda_0$ only when $k=N-2$.
This completes the proof of the lemma.
\end{proof}

\def\vfive{\varphi_r}
\def\vsix{\varphi_l}

\def\cH{\mathcal H}

\subsection{Action of $\Phi_4$}

Recall that $\Phi_4$ is the curve on the boundary of the handlebody $\cH$ (see  Figure \ref{Phi4}) which acts on $\CS_\xi(\BD)= \BC[x_1, x_2, y]$.
The action of $\Phi_4$ is $\BC[x_1]$-linear, and every element of $W$ is a $\xi^{2N} x_1$-eigenvector of $\Phi_4$.

Recall that $\rdeg= \fil_{\al_2}$, and $\rdeg(x_1^{a_1} x_2^{a_2} y^b)= a_2 + b$. Note that  for $F \in \BC[x_1, x_2]$, $\rdeg(F)$ is exactly the $x_2$-degree of $F$.
By looking at  the intersection with $\al_2$, we see that $\Phi_4$ preserves the $\al_2$-filtration, i.e.
$ \rdeg \Phi_4( F) \le \rdeg(F)$. We will study actions of $\Phi_4$ on the associated graded spaces.

We will use the notation $ F+  \rdeg\text{-$l.o.t.$}$  to mean $F+F_1$, where $\rdeg(F_1) < \rdeg(F)$.

\begin{lemma} Suppose $1 \le k \le N-1$. One has
\begin{align}
  \Phi_4(a(x_1)\, T_N(x_2)) &= \xi^{2N} x_1 \,[a(x_1) T_N(x_2)] \label{eq.93}\\
\Phi_4(T_k(x_2)) &= y\left [ \xi^2(\xi^{-2k} - \xi^{2k}) x_2^{k-1} \right]   + \rdeg\text{-l.o.t}. \mod{\BC[x_1, x_2]}.
\label{eq.96}   
\end{align}

\end{lemma}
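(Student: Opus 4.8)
The plan is to compute the action of the operator $\Phi_4$ directly on the relevant skein diagrams, separating the two identities. For \eqref{eq.93}, the key observation is that $\Phi_4$ is $\BC[x_1]$-linear (as noted in the remark after the proposition), so it suffices to understand $\Phi_4(T_N(x_2))$. The curve $x_2$ is a small loop around the right puncture $\cD_2$, and $\Phi_4$ is a boundary curve of a disk in the handlebody; the geometric picture (see Figure~\ref{Phi4}) shows that $\Phi_4$ links $x_2$ in a controlled way. I expect that $T_N(x_2)$, being transparent by Corollary~\ref{cor.5}, will behave as an eigenvector: specifically, the $\xi^{2N}$-transparency means that pulling $\Phi_4$ past $T_N(x_2)$ multiplies by $\xi^{2N}$ raised to the mod-2 intersection number, and what remains is $x_1$ times the original, yielding the stated eigenvalue $\xi^{2N} x_1$. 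This is essentially a reprise of the eigenvector calculation \eqref{eq:eigen4}, but now applied with $a(x_1)$ factored out.

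First I would establish \eqref{eq.93} as follows. Since $\Phi_4$ is $\BC[x_1]$-linear, write $\Phi_4(a(x_1) T_N(x_2)) = a(x_1)\,\Phi_4(T_N(x_2))$, so the content is $\Phi_4(T_N(x_2)) = \xi^{2N} x_1\, T_N(x_2)$. I would resolve $\Phi_4(T_N(x_2))$ by viewing $\Phi_4$ as encircling the region containing $x_2$ together with the arc to the left puncture; using the $\mu$-transparency of $T_N$ (Corollary~\ref{cor.5}) with $\mu=\xi^{2N}$, sliding $\Phi_4$ produces the factor $\xi^{2N}$ and replaces $\Phi_4$ by the loop $x_1$ around the left puncture. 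This identifies the eigenvalue and completes \eqref{eq.93}.

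The harder identity is \eqref{eq.96}, where $1 \le k \le N-1$ and $T_k(x_2)$ is \emph{not} transparent, so the clean eigenvector argument fails and $\Phi_4$ genuinely raises the $y$-degree while lowering $\rdeg$. The plan is to compute $\Phi_4(x_2^k)$ or, better, $\Phi_4(T_k(x_2))$ by resolving crossings in the diagram of $\Phi_4$ encircling $x_2^k$, keeping only the top-order-in-$\rdeg$ terms modulo $\BC[x_1,x_2]$. I anticipate that this reduces to the kind of Chebyshev-resolution already carried out in Proposition~\ref{prop.Psi} and Proposition~\ref{prop.un}, where resolving the innermost crossing of a loop that passes between the two punctures converts one factor of $x_2$ into a factor of $y$ at the cost of lowering the $x_2$-degree by one; the Chebyshev recursion then packages the coefficient into the closed form $\xi^2(\xi^{-2k}-\xi^{2k})$ multiplying $y\,x_2^{k-1}$. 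The analogy with \eqref{eq.104}, where $\Psi(T_k(z))$ produced a coefficient $t^2(t^{-2k}-t^{2k})$ on the $S_{k-1}$ term, strongly suggests that the same resolution underlies \eqref{eq.96}, with $y$ playing the role that the arc $u_1$ plays in the marked-annulus calculation.

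The main obstacle will be the bookkeeping of the $\rdeg$-leading term in \eqref{eq.96}: I must show that after resolving all crossings of $\Phi_4(T_k(x_2))$, exactly one term survives at the top filtration level with a single factor of $y$, and that every other contribution either lies in $\BC[x_1,x_2]$ (the error term we are allowed to discard) or is strictly lower in $\rdeg$. Controlling which resolutions contribute a $y$ versus which stay inside $\BC[x_1,x_2]$ requires carefully tracking how the encircling curve $\Phi_4$ separates the two punctures, and this is where the geometry of Figure~\ref{Phi4} must be used precisely. Once the leading-order resolution is pinned down, the coefficient $\xi^2(\xi^{-2k}-\xi^{2k})$ should fall out of the same Chebyshev identity \eqref{eTn} used throughout Section~\ref{sec.4}, so I expect the computation to be a filtered, top-degree analogue of Proposition~\ref{prop.Psi} rather than anything genuinely new.
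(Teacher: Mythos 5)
Your proposal matches the paper's proof in both halves: \eqref{eq.93} is exactly the paper's appeal to the $\xi^{2N}$-transparency of $T_N(z)$ (Corollary \ref{cor.5}) together with $\BC[x_1]$-linearity, and for \eqref{eq.96} the paper does precisely what you anticipate, reducing to the marked-annulus computation of Proposition \ref{prop.Psi}. The one simplification you stopped short of is that \eqref{eq.104} applies \emph{verbatim}, not merely as a top-degree analogy: the portion of the diagram of $\Phi_4(T_k(x_2))$ near the right puncture is an embedded copy of $\Psi(T_k(z))$ in $\All\times[-1,1]$, with $u_1$ closing up to $y$ and $u_0$ to $x_1$, so one obtains the exact identity $\Phi_4(T_k(x_2)) = y\left[\xi^2(\xi^{-2k}-\xi^{2k})S_{k-1}(x_2)\right] + x_1\left[\xi^{-2k}S_k(x_2)-\xi^{2k}S_{k-2}(x_2)\right]$, the crossing-by-crossing bookkeeping you flagged as the main obstacle is unnecessary, and \eqref{eq.96} follows at once since $S_{k-1}(x_2)=x_2^{k-1}+{}$lower $\rdeg$ terms while the $x_1$-term lies in $\BC[x_1,x_2]$.
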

\begin{proof}
Identity \eqref{eq.93} follows from the $\xi^{2N}$-transparency of $T_N(z)$.

Let us prove \eqref{eq.96}. Apply identity \eqref{eq.104} to the dashed box below, we have
\begin{align*}
\Phi_4(T_k(x_2)) & = \ \psdraw{Phi4_p}{1.5in}   \\
&= y\left [ \xi^2(\xi^{-2k} - \xi^{2k}) S_{k-1}(x_2) \right]   +  x_1 \left[ \xi^{-2k}  S_k(x_2) - \xi^{2k} S_{k-2}(x_2)  \right],
\end{align*}
which implies \eqref{eq.96}.
\end{proof}

\subsection{The space $W \cap \BC[x_1, x_2]$}
\begin{lemma} Suppose $\xx \in W \cap \BC[x_1, x_2]$ and the coefficient of $x_1^N x_2^N$ in $\xx$ is 0.
Then $E \in \BC$.
\label{lem.x1x2}
\end{lemma}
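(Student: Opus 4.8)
The plan is to prove Lemma~\ref{lem.x1x2} by a descent argument on the $x_2$-degree (equivalently $\rdeg$), using the action of $\Phi_4$ together with Lemma~\ref{lem.y-deg}. First I would write $\xx \in W \cap \BC[x_1,x_2]$ in the Chebyshev basis in the $x_2$ variable, so that $\xx = \sum_{k=0}^{m} a_k(x_1)\, T_k(x_2)$ with $a_k(x_1)\in \BC[x_1]$ and $m = \rdeg(\xx) = \deg_{x_2}(\xx)$. The hypothesis that the coefficient of $x_1^N x_2^N$ vanishes rules out the top term $x_1^N T_N(x_2)$, so together with the constraint $\rdeg(\xx)\le N$ coming from $\xx \in V_N$, I expect the leading term to be forced down. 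The goal is to conclude $m=0$, i.e.\ $\xx = a_0(x_1) \in \BC[x_1]$; then a symmetric argument (or applying $\sigma$, under which $\xx$ is fixed by \eqref{eq:eigen} and which swaps $x_1,x_2$) forces $\xx \in \BC$.

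The engine is \eqref{eq.96}: for $1 \le k \le N-1$, modulo $\BC[x_1,x_2]$ one has
$$
\Phi_4(T_k(x_2)) = y\left[\xi^2(\xi^{-2k}-\xi^{2k})\, x_2^{k-1}\right] + \rdeg\text{-}l.o.t.,
$$
whereas \eqref{eq.93} shows $\Phi_4(a(x_1)T_N(x_2)) = \xi^{2N} x_1 \,[a(x_1)T_N(x_2)]$ stays inside $\BC[x_1,x_2]$. Since $\xx$ is a $\xi^{2N} x_1$-eigenvector of $\Phi_4$ and $\Phi_4$ is $\BC[x_1]$-linear, I would compute $\Phi_4(\xx) - \xi^{2N} x_1\, \xx = 0$ and examine its $y^1$-part. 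The $T_N(x_2)$-term contributes nothing to the $y$-part, but each $T_k(x_2)$ with $1\le k \le N-1$ and $a_k \ne 0$ contributes a nonzero multiple of $y\, x_2^{k-1}$ at $\rdeg$-level $k$ (the factor $\xi^{-2k}-\xi^{2k}$ is nonzero precisely because $\xi^4$ has order $N > k$). Reading off the highest such $\rdeg$-level in the $y$-component shows every $a_k$ with $1 \le k \le N-1$ must vanish, so $\xx = a_0(x_1) + a_N(x_1) T_N(x_2)$.

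It remains to eliminate the $a_N(x_1)T_N(x_2)$ term. Here I would invoke the hypothesis: the coefficient of $x_1^N x_2^N$ in $\xx$ is $0$, which forces $\deg_{x_1} a_N \le N-1$; combined with $\ldeg(\xx)\le N$ and the structure of $W$, I expect this to push $a_N$ down as well. The cleanest route is to feed the surviving form back into Lemma~\ref{lem.y-deg}: since $\xx \in \BC[x_1,x_2]$ trivially has $\coeff(\xx, y^N)=0$, and we have now shown $\xx = a_0(x_1) + a_N(x_1)T_N(x_2)$, I would use the $\sigma$-symmetry \eqref{eq:eigen} to run the same $\Phi_4$-argument with the roles of $x_1,x_2$ exchanged (so that $\Phi$ acting on the $x_1$-side kills the symmetric top term), concluding $a_N = 0$ and then $a_0 \in \BC$.

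The main obstacle I anticipate is the bookkeeping at the interface between the two filtrations: $\Phi_4$ raises $y$-degree by one while preserving $\rdeg$, so I must track the bidegree in $(y,x_2)$ carefully to ensure the leading $y^1 x_2^{k-1}$ terms from distinct $T_k$ do not cancel against lower-order corrections or against the $\rdeg$-l.o.t.\ tails from higher $k$. Isolating the eigenvector equation level-by-level in $\rdeg$ should make these contributions triangular and hence non-cancelling, but verifying that the extreme case $k=N$ genuinely decouples (via \eqref{eq.93}) and that the hypothesis on the $x_1^N x_2^N$ coefficient is exactly what closes the final gap is the delicate point.
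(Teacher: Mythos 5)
Your first phase is sound and matches the paper's own argument: you expand $E=\sum_k a_k(x_1)T_k(x_2)$, use \eqref{eq.93} to split off $a_N(x_1)T_N(x_2)$ as a genuine $\xi^{2N}x_1$-eigenvector, and use the $y$-component of \eqref{eq.96} (with $\xi^{-2k}-\xi^{2k}\neq 0$ for $1\le k\le N-1$, exactly because $\xi^4$ has order $N$) to show the truncation $E-a_N(x_1)T_N(x_2)$ would violate the eigenvector equation unless its $x_2$-degree is zero. You are also right that the hypothesis $\coeff(E,x_1^Nx_2^N)=0$ forces $\deg_{x_1}a_N\le N-1$. So you correctly arrive at $E=a_0(x_1)+a_N(x_1)T_N(x_2)$.

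The gap is your final step: the $\sigma$-conjugated $\Phi_4$-argument does \emph{not} conclude $a_N=0$. Run it honestly: writing $E=\sum_j b_j(x_2)T_j(x_1)$, that argument only kills $b_j$ for $1\le j\le N-1$, and the slot $b_0(x_2)$ --- which is precisely where a \emph{constant} multiple of $T_N(x_2)$ lives --- is untouched, just as your original argument never constrained $a_0(x_1)$. Combining both directions with the degree bound $\deg_{x_1}a_N\le N-1$ and the $\sigma$-invariance \eqref{eq:eigen} leaves the residual family $E=c_1\left(T_N(x_1)+T_N(x_2)\right)+c_2$, which is $\sigma$-invariant, lies in $V_N$ (when $N$ is even), has vanishing $y^N$- and $x_1^Nx_2^N$-coefficients, and passes both of your $y$-component tests; your appeal to Lemma \ref{lem.y-deg} is vacuous here since $E\in\BC[x_1,x_2]$ is already known. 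So no bookkeeping in the $(y,\rdeg)$ bidegree can close this; an extra eigenvalue input is indispensable, and it is what the paper supplies. Namely, if $c_1\neq 0$, evenness of $\sdeg(E)$ forces $N$ even, hence $\xi^{2N}=-1$ by Lemma \ref{lem.lambda}(c); but $T_N(x_1)$ and $T_N(x_2)$ are $\xi^{2N}\lambda_0=-\lambda_0$ eigenvectors of $\Phi_0$ by $\xi^{2N}$-transparency (Corollary \ref{cor.5}), whereas $E\in W$ must be a $\lambda_0$-eigenvector of $\Phi_0$ by \eqref{eq:eigen3} --- a contradiction, so $c_1=0$. (Alternatively one could exploit the $\BC[x_1,x_2]$-component of the $\Phi_4$-equation itself: $\BC[x_1]$-linearity gives $\Phi_4(T_N(x_1))=x_1\,\Phi_4(1)\,$-type terms with eigenvalue $x_1$ rather than $\xi^{2N}x_1$ when $\xi^{2N}=-1$; but some step beyond the $y$-component is unavoidable, and your proposal as written contains none.)
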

\begin{proof} Since $T_k(x_2)$ is a basis of $\BC[x_2]$, we can write $E$ uniquely as
$$ E = \sum_{k=0}^N a_k(x_1) T_k(x_2), \quad a_k(x_1) \in \BC[x_1].$$
Let $j$ be the $x_2$-degree of $E' := E - a_N(x_1) T_N(x_2)$. Then $j \le N-1$

First we will show that  $j=0$. Assume the contrary $ j \ge 1$. Thus $ 1\le j \le N-1$.
Note that $E$, by assumption, and $a_N(x_1) T_N(x_2)$, by \eqref{eq.93},  are eigenvectors of $\Phi_4$ with eigenvalue $\xi^{2N} x_1$.
It follows that $E'$ is also an eigenvector of $\Phi_4$ with eigenvalue $\xi^{2N} x_1$.
We have
\begin{align*}
 E' = \sum_{k=0}^j a_k(x_1)  \Phi_4(T_k(x_2))= a_j(x_1) T_j(x_2)  + \rdeg\text{-l.o.t}.
\end{align*}
Using  \eqref{eq.96} and the fact that $\Phi_4$ does not increase $\rdeg$, we have
\begin{align*}
\Phi_4(E') &= y \left [ a_j(x_1) \xi^2(\xi^{-2k} - \xi^{2k}) x_2^{k-1}   \right]  + \rdeg\text{-l.o.t}. \mod{\BC[x_1, x_2]}.
\end{align*}
 When $1 \le j \le n-1$, the coefficient of $y$, which is the element in the square bracket, is non-zero. Thus
$\Phi_4(E') \not \in \BC[x_1, x_2]$, while $E'\in \BC[x_1, x_2]$. This means $E'$ can not be an eigenvector of $\Phi_4$, a contradiction.
This proves $j=0$.

So we have
$$ E= a_N(x_1) T_N(x_2) + a_0(x_1).$$

Because the $\sdeg(E)  < 2N$, the $x_1$-degree of $a_N(x_1)$ is $< n$. Using the invariance under $\sigma$, one sees that $E$ must be of the form

\be E= c_1(T_N(x_1) + T_N(x_2)) + c_2, \quad c_1, c_2 \in \BC.
\label{e66}
\ee
To finish the proof of the lemma, we need to show that $c_1=0$.
Assume that $c_1 \neq 0$. Since $E$ has even double degree, $N$ is even. By Lemma \ref{lem.lambda}(c), $\xi^{2N}=-1$.

Recall that $E$ is $\lambda_0$-eigenvector of $\Phi_0$. Apply $\Phi_0$ to \eqref{e66},
$$ \lambda _0\left[ c_1(T_N(x_1) + T_N(x_2)) + c_2\right] = \Phi_0(c_1(T_N(x_1) + T_N(x_2)) + c_2).$$
Both $T_N(x_1)$ and $T_N(x_2)$ are eigenvectors of $\Phi_0$ with eigenvalues $\xi^{2N}\lambda_0= - \lambda_0$, while $\Phi_0(1)= \lambda_0$. Hence we have
$$ \lambda _0\left[ c_1(T_N(x_1) + T_N(x_2)) + c_2\right] = \lambda_0 \left[ - c_1(T_N(x_1) - T_N(x_2)) + c_2 \right].$$
which is impossible since $c_1 \lambda _0\neq 0$.

Hence, we have $c_1=0$, and $E \in \BC$.
\end{proof}
\def\coeff{\mathrm{coeff}}
\subsection{Some maximal degree parts of $T_N(\gamma)$}
\begin{lemma} One has
\begin{align}
\coeff(T_N(\gamma), y^N)&= \xi^{-N^2} \\
\coeff(T_N(\gamma), x_1^N x_2^N)& = \xi^{N^2}.
\end{align}
\label{lem.degree}
\end{lemma}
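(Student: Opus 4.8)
The plan is to reduce both coefficients to a single resolution state of the $N$-parallel cable of $\gamma$, which I will write $\gamma^{(N)}$, and then to read off a pure power of $t$.

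First I would reduce from $T_N(\gamma)$ to $\gamma^{(N)}$. Since $T_N$ is monic of degree $N$, one has $T_N(\gamma)=\gamma^{(N)}+\sum_{k<N}c_k\,\gamma^{(k)}$, and each lower cable satisfies $\gamma^{(k)}\in V_k$ (shown in the proof that $T_n(\gamma)\in V_n$), hence $\sdeg(\gamma^{(k)})\le 2k\le 2N-2$. Both $y^N$ and $x_1^Nx_2^N$ have double degree exactly $2N$, the maximum occurring in $V_N$, so their coefficients in $T_N(\gamma)$ coincide with their coefficients in $\gamma^{(N)}$. Moreover, a short computation with the degree bounds of Corollary \ref{Cor.d1} shows that the double-degree-$2N$ part of $\gamma^{(N)}$ is supported on the diagonal monomials $x_1^{N-b}x_2^{N-b}y^b$, $0\le b\le N$ (the constraints $\ldeg\le N$, $\rdeg\le N$, $\sdeg=2N$ force $a_1=a_2=N-b$), so $y^N$ and $x_1^Nx_2^N$ are precisely its two extreme terms.

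Next I would compute these two extreme coefficients by a direct resolution of the crossings of $\gamma^{(N)}$. The single crossing of $\gamma$ admits two smoothings: one produces the curve $y$ and the other the pair $x_1\sqcup x_2$; resolving by \eqref{eq.skein1} gives $\gamma = t^{-1}y+t\,x_1x_2+(\text{lower double degree})$, where the assignment of $t^{\pm1}$ is fixed by the diagram. Cabling replaces this crossing by a block of $N\times N=N^2$ crossings, the $N$ parallels of the over-strand meeting the $N$ parallels of the under-strand. The monomial $y^N$ can only arise from the state in which all $N^2$ crossings receive the smoothing yielding the $y$-curve: any other state lowers the intersection number with $\al_1$ or $\al_2$ and so lands in strictly smaller double degree. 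That extremal state contributes $N$ disjoint parallel copies of $y$, i.e.\ $y^N$, and carries the coefficient $(t^{-1})^{N^2}=t^{-N^2}$. Symmetrically, the unique state producing $x_1^Nx_2^N$ resolves all $N^2$ crossings the other way and carries $t^{N^2}$. Specializing $t=\xi$ yields the two claimed values.

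The main obstacle is the bookkeeping of this extremal state: I must verify that resolving all $N^2$ crossings identically produces exactly $N$ parallel copies of the relevant curve with no trivial circles (which would otherwise introduce factors of $\lambda_0$ and spoil the pure monomial) and no residual kink; if a kink did appear it would be absorbed by the framing formula \eqref{eq.framing}, but for the two extremal states, drawn carefully, none should. A useful consistency check is the identity $T_N(\bar\gamma)=T_N(\gamma)\big|_{t\to t^{-1}}$ together with the symmetry $\sigma$, which exchanges the roles of $y^N$ and $x_1^Nx_2^N$ and confirms that the two coefficients are inverse powers of $t$.
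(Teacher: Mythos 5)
Your overall strategy is exactly the paper's: reduce to the top cable $\gamma^{(N)}$ using that $T_N$ is monic and the lower cables lie in $V_k$, identify the unique Kauffman state producing each extreme monomial, and read off the coefficient $t^{\mp N^2}$ of the all-same smoothing. The reduction step and the verification that the two extremal states give $N$ clean embedded parallels of $y$ (resp.\ of $x_1\sqcup x_2$) with no trivial circles are fine. But the crux of the lemma --- that \emph{only} the all-negative state can yield $y^N$ --- is justified by a claim that fails on both counts. First, smoothing a crossing does not change the geometric intersection of the resulting diagram with $\al_1$ or $\al_2$: all $N^2$ crossings of $\gamma^{(N)}$ sit in the central region of $\BD$, away from these arcs, so \emph{every} state meets $\al_1$ and $\al_2$ in exactly $N$ points and satisfies the same bounds $\ldeg\le N$, $\rdeg\le N$; no state is excluded this way. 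Second, double degree cannot separate the states: as your own support computation shows, the top part of $\gamma^{(N)}$ is spanned by the diagonal monomials $x_1^{N-b}x_2^{N-b}y^b$, \emph{all} of double degree $2N$, and mixed states genuinely land there (already for $N=2$ the cable contains $x_1x_2y$ terms of double degree $4$). So nothing in your argument rules out a mixed state contributing to $y^N$ itself, possibly together with trivial circles, which only cost scalar factors $\lambda_0$ and are invisible to the double degree.

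What is needed, and what the paper does, is a crossing-local invariant that \emph{does} strictly drop. For each crossing $C$, take the vertical arc $\al_3'$ through $C$; it is isotopic to $\al_3$, so $\fil_{\al_3'}=\fil_{\al_3}$. The diagram $\gamma^{(N)}$ meets $\al_3'$ in exactly $2N$ points, two of which are contributed by $C$. The positive smoothing at $C$ removes those two points, and smoothing the remaining crossings can never raise the count, so any state containing at least one positive smoothing meets $\al_3'$ in at most $2N-2$ points. Since Lemma \ref{degree_bound} bounds the $y$-degree of an embedded state by half its intersection number with $\al_3'$ (and the bound persists for states with trivial components), such a state has $y$-degree at most $N-1$ and cannot contribute to $y^N$; the statement for $x_1^Nx_2^N$ is proved symmetrically. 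As a minor side point, your consistency check misattributes the symmetry: $\sigma$ fixes both $y^N$ and $x_1^Nx_2^N$, since it merely swaps $x_1$ and $x_2$; it is the mirror map $t\mapsto t^{-1}$, which interchanges $\gamma$ and $\bar\gamma$, that exchanges the two extremal coefficients.
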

\begin{proof} Since $T_N(\gamma) = \gamma^N +  \sdeg\text{-}l.o.t.$, we have
$$ \coeff(T_N(\gamma), y^N) = \coeff(\gamma^N, y^N), \quad \coeff(T_N(\gamma), x_1^N x_2^N) = \coeff(\gamma^N, x_1^N x_2^N).$$

There are $N^2$ crossing points in the diagram of $\gamma^N$. Each crossing can be smoothed in two ways. The positive smoothing acquires a factor $t$ in the skein relation, and
the negative smoothing acquires a factor $t^{-1}$. The are $2^{N^2}$ smoothings of $\gamma^N$. Each smoothing $s$ of all the $N^2$ crossings gives rise to a link $L_s$ embedded in $\BD$.
Then $\gamma^N$ is a linear combination of all $L_s$. We will show that the only $s$ for which $L_s= y^N$ is the all negative smoothing.

Consider a crossing point $C$ of $\gamma^N$. The vertical line passing through $C$ intersect $\BD$ in an interval $\al_3'$ which is isotopic to $\al_3$, and $\fil_{\al_3}= \fil_{\al_3'}$.
 For an embedded link $L$ in $\BD$, as an element of $\CS(\BD) =
R[x_1,x_2, y]$, $L$ is a monomial whose $y$-degree is bounded above by half the number of intersection points of $L$ with $\al_3'$. The diagram $\gamma^N$ has exactly $2N$ intersection points
with $\al_3'$, with $C$ contributing two (of the $2N$ intersection points). If we positively smooth $\gamma^N$ at $C$, the result is a link diagram with $2N-2$ intersection points with $\al_3'$, and no
matter how we smooth other crossings, the resulting link will have less than or equal to $2N-2$ intersection points with $\al_3'$. Thus we cannot get $y^N$ if any of the crossing is smoothed positively.
The only smoothing which results in $y^N$ is the all negative smoothing. The coefficient of this smoothing is $\xi^{-N^2}$.

Similarly, one can  prove that the only smoothing which results in $x_1^N x_2 ^N$ is the all positive smoothing, whose coefficient is $\xi^{N^2}$.
\end{proof}

\subsection{Proof of Proposition \ref{prop.main1}}
Let
$$E= T_N(\gamma) - \xi^{N^2} T_N(x_1) T_N(x_2) - \xi^{-N^2} T_N(y).$$
Then $E \in W$. Lemma \ref{lem.degree} shows that $\coeff(E, y^N )=0 = \coeff(E, x_1^N x_2^N )$.
 By Lemma \ref{lem.y-deg}, $ E \in \BC[x_1, x_2]$. Then, by Lemma \ref{lem.x1x2}, we have $E \in \BC$, i.e. $E$ is a constant.

We will show that $E=0$. This is done by using the  inclusion of $\cH$ into $\BR^3$, which  gives a $\BC$-linear map $\iota: \CS_\xi(\BD) \to \CS_\xi(\BR^3) = \BC$. Under $\iota$, we have
\be
 E = \iota (T_N(\gamma))  - \iota (\xi^{N^2} T_N(x_1) T_N(x_2))  - \iota (\xi^{-N^2} T_N(y)). \label{eq.98}
 \ee
The right hand side involves the trivial knot and the trivial knot with framing 1, and can be calculated explicitly as follows.
 Note that $\iota(\gamma)$ is the unknot with framing 1, while $\iota(x_1)=\iota(x_2)=\iota(y)=U$, the trivial knot.
 With $T_N = S_N- S_{N-2}$, and  the framing change
 given by \eqref{eq.framing}, we find
 \be T_N(\psdraw{curl}{.25in})= (-1)^N \xi^{N^2 + 2N}\,  T_N(\psdraw{straight}{.25in})  = -\xi^{-N^2}\,  T_N(\psdraw{straight}{.25in}),
 \label{eq.100}
 \ee
 where the second identity follows from \eqref{eq.xi}. Similarly, using \eqref{eq.U}, we have
 \be
  T_N(L \sqcup U) = 2 (-1)^N \xi^{2N} \, T_N(L) = -(\xi^{2N^2} + \xi^{-2N^2}) \, T_N(L).
  \label{eq.101}
  \ee
  From \eqref{eq.100} and \eqref{eq.101}, we calculate the right hand side of \eqref{eq.98}, and find that $E=0$. This
  proves \eqref{eq:main}.

  The proof of \eqref{eq:main2} is similar. Alternatively, one can get \eqref{eq:main2} from \eqref{eq:main} by noticing that the mirror image map on $R[x_1, x_2, y]$
  is the $\BC$-algebra map sending $t$ to $t^{-1}$, leaving each of $x_1, x_2, y$ fixed.

This completes the proof of Proposition \ref{prop.main1}.

\section{Proof of Theorem \ref{thm.main2}} \label{sec.7}
Recall that $\ve= \xi^{N^2}$, where $\xi^4$ is a root of 1 of order $N$. Then $\ve^4=1$.
The map  $\CS_\ve(M) \to \CS_\xi(M)$, defined for framed links by $L\to T_N(L)$, is well-defined if and only
if it preserves the skein relations \eqref{eq.skein1} and \eqref{eq.skein2}, i.e. in $ \CS_\xi(M)$,
\begin{align}
T_N(L) = \ve T_N(L_+) + \ve^{-1} T_N(L_-)  \label{eq.001} \\
T_N(L \sqcup U) = -(\ve^2 + \ve^{-2}) T_N(L)  \label{eq.002}.
\end{align}
Here, in \eqref{eq.001}, $L, L_+, L_-$ are links appearing in the original skein relation \eqref{eq.skein1}, they are identical everywhere, except in a ball $B$, where
they look like in Figure \ref{fig.skein2}.
\begin{figure}[htpb]
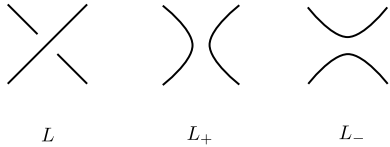

$$ \psdraw{skein}{2in} $$
\caption{The links $L$, $L_+$, and $L_-$}
\label{fig.skein2}
\end{figure}

Identity \eqref{eq.002} follows from \eqref{eq.101}. Let us prove \eqref{eq.001}.

Case 1:  The two strands of $L$ in the ball $B$ belong to the same component.  Then \eqref{eq.001} follows from Proposition \ref{prop.main1}, applied to the handlebody which is
the union of $B$ and a tubular neighborhood of $L$.

Case 2: The two strands of $L$ in $B$ belong to different components. Then the two strands of $L_+$ belong to the same component, and we can apply \eqref{eq.001} to
the case when the left hand side is $L_+$. We have
\begin{align}
T_N(L_+)  = \ T_N( \psdraw{Lplus}{.3in}) \ & = \ T_N( \psdraw{Lplus1}{.3in} ) \notag \\
&= \ve^{-1}\,  T_N( \psdraw{Lplus2}{.3in}) \ +\  \ve\, T_N(\psdraw{Lplus3}{.3in})  \label{003}\\
&= \ve^{-1}\, T_N( \psdraw{Lplus2}{.3in}) \  + \ \ve\,  (-\ve) \,T_N(  \psdraw{Lminus}{.3in} ) \label{004}\\
&= \ve^{-1} T_N(L) -  \ve^2 T_N(L_-), \label{eq.last}
\end{align}
where \eqref{003} follows from Case 1, and \eqref{004} follows from the framing factor formula \eqref{eq.100}.
Multiplying \eqref{eq.last} by $\ve$ and using $\ve^3=\ve^{-1}$, we get \eqref{eq.001} in this case.
This completes the proof of Theorem \ref{thm.main2}.

\begin{remark}
In \cite{BW}, in order to prove Theorem \ref{thm.main2}, the authors proved, besides Proposition \ref{prop.main1}, a similar statement for links in
the cylinder over  a punctured torus. Here we bypass this extra statement by reducing the extra statement to Proposition \ref{prop.main1}. Essentially this is due to the fact
that the cylinder over  a punctured torus is the same as the cylinder over  a twice punctured disk.
\end{remark}

\end{document}